\renewcommand\footnotetextcopyrightpermission[1]{} 
\newcommand{\e}{{\bf 1}}
\newcommand*\colvec[3][]{
    \begin{pmatrix}\ifx\relax#1\relax\else#1\\\fi#2\\#3\end{pmatrix}
}
\begin{document}

\title{On the stochastic and asymptotic improvement of First-Come First-Served and
Nudge scheduling}
\author{Benny Van Houdt}

\affiliation{%
  \institution{University of Antwerp}
  \streetaddress{Middelheimlaan 1}
  \city{Antwerp}
  \postcode{2000}
  \country{Belgium}}

\begin{abstract}
Recently it was shown that, contrary to expectations, the First-Come-First-Served (FCFS) scheduling algorithm can be stochastically improved upon by a scheduling algorithm called {\it Nudge}
for light-tailed job size distributions.
Nudge partitions jobs into 4 types based on their size, say small, medium,
large and huge jobs. Nudge operates identical to FCFS, except that whenever a {\it small} job
arrives that finds a {\it large} job waiting at the back of the queue, Nudge swaps the small
job with the large one unless the large job was already involved in 
an earlier swap.

In this paper, we show that FCFS can be stochastically improved upon under far weaker conditions. We consider a system with $2$ job types and limited swapping between type-$1$ and type-$2$ jobs,  but where a type-$1$ job is not necessarily smaller than a type-$2$ job. More specifically,
we introduce and study the Nudge-$K$ scheduling algorithm which allows type-$1$ jobs
to be swapped with up to $K$ type-$2$ jobs waiting at the back of the queue, while type-$2$
jobs can be involved in at most one swap. 
We present an explicit expression for the response time distribution under Nudge-$K$ when
both job types follow a phase-type distribution. Regarding the
 asymptotic  tail improvement ratio (ATIR) , we derive a simple expression for the ATIR,
as well as for the $K$ that maximizes the ATIR. We show that the ATIR is positive and the optimal $K$ tends to infinity in heavy traffic  as long as the type-$2$ jobs are on average longer than the type-$1$ jobs.
\end{abstract}

\fancyfoot{}
\maketitle
\thispagestyle{empty}

\section{Introduction}

Although there exists an abundance of scheduling algorithms, many systems still rely on the 
First-Come-First-Served (FCFS) scheduling algorithm as FCFS is considered to be a fair 
scheduling algorithm that is easy to implement and does not require any job size information. There is also 
theoretical support for selecting FCFS apart from the well-known fact that it minimizes the
maximum response time of any finite sequence of jobs. 
If we denote $R$ as the response time
of an arbitrary job under FCFS and make the following technical assumptions:
\begin{itemize}
\item Jobs arrive according to a Poisson
process.
\item The job size distribution $X$ is light-tailed 
(which means there exists an $\epsilon > 0$ such that
$E[e^{-\epsilon X}]$ is finite).
\item If $\tilde S(s)$ denotes the Laplace transform of the job size distribution,
then $\tilde S(s)$ has either no singularities or if $s^* < 0$ is its right-most singularity, then $\tilde S(s^*)=\infty$. 
\end{itemize}
Then there exist constants $\theta_Z > 0$ and $c_{FCFS} > 0$ such that
\[ P[R > t] \sim c_{FCFS}e^{-\theta_Z t},\]
where $\sim$ indicates that the ratio of the two quantities converges to 1 as $t$ tends to infinity. Note that the latter two assumptions correspond to a class-I distribution
in \cite[Section 5]{abate94} and these distributions include all well-behaved light-tailed distributions such as
any phase-type distribution or distribution with finite support (such as
truncated heavy-tailed distributions).
The constant $\theta_Z$ is called the decay rate. Let $\pi$ be any
scheduling algorithm and $R_{\pi}$ be its associated response time distribution
(in an M/G/1 queue with a class-I job size distribution),
then (see \cite[Section 3.1]{boxma07}) there exists a constant $M(\pi) \geq 0$ such that
\[ \limsup_{t \rightarrow \infty} \frac{P[R>t]}{P[R_\pi > t]} \leq M(\pi).\]
This is equivalent to stating that FCFS has the largest possible decay rate  in systems subject
to Poisson arrivals and class-I job sizes (in fact, $\theta_Z$ is equal to the decay rate of the workload distribution $Z$ in the system). Any scheduling algorithm with the largest possible decay rate is called {\bf weakly tail optimal}. In \cite{wierman12} FCFS was conjectured to be 
{\bf strongly tail optimal} for class-I job sizes, which would imply that $M(\pi) \leq 1$ for any $\pi$ and FCFS results in the best possible tail behavior for 
class-I job sizes. 

In a recent paper \cite{nudge} FCFS was shown {\it not} to be strongly tail optimal by introducing
a scheduling algorithm called Nudge such that $M(Nudge)>1$.  Further, contrary
to expectations, it was shown that the Nudge scheduling algorithm can
stochastically improve upon FCFS. A scheduling algorithm $\pi_1$ is said to
{\bf stochastically improve} upon an algorithm $\pi_2$ if and only if
$P[R_{\pi_1} > t] < P[R_{\pi_2} > t],$
for any $t > x_{min}$, where $x_{min}$ is the infimum of the support of the
job size distribution $X$ (for $t \leq x_{min}$, we have $P[R_{\pi_1} > t] = P[R_{\pi_2} > t]$).
This can be restated by saying that the tail improvement ratio (TIR) in $t$
defined as
\[\mbox{TIR}(t) = 1 - \frac{P[R_{\pi_1} > t]}{P[R_{\pi_2} > t]},\]
is positive for all $t > x_{min}$. This means that Nudge improves every moment and percentile of
the response time  of FCFS! Note that it is easy to devise scheduling algorithms
that reduce the mean response time of FCFS, but this typically comes at the 
expense of a worse decay rate \cite{nuyens08}.

To achieve this stochastic improvement Nudge partitions the jobs into
$4$ types, say small, medium, large and huge jobs,  based on their size
using three thresholds $x_1, x_2$ and $x_3$. Nudge then operates in the same
manner as FCFS, except that whenever a {\it small} job
arrives that finds a {\it large} job waiting at the back of the queue, Nudge swaps the small
job with the large one unless the large job was already involved in 
an earlier swap. The authors of \cite{nudge} then showed that it is possible
for any continuous class-I job size distribution $X$, to find appropriate thresholds $x_1, x_2$
and $x_3$ (that depend on $X$) such that Nudge stochastically improves upon FCFS.
Simulation experiments further showed that this is often still the case if
$x_1=x_2$ and $x_3=\infty$, which means in the absence of medium and huge jobs
(which were needed for the proofs).

In this paper we consider  a system
with two types of jobs (see Section \ref{sec:system} for details), where a random type-$1$ job is not necessarily smaller than a random type-$2$ job, and  a set of scheduling algorithms  called Nudge-$K$, where $K \geq 0$ is an input parameter (that can be set equal to $\infty$).
Under Nudge-$K$ any arriving type-$1$ job can be swapped with at most $K$ type-$2$ jobs waiting at the back of the queue, but type-$2$ jobs can be involved in at most one swap. This means
that a type-$1$ job passes up to $K$ jobs waiting at the back of the queue until it
either encounters another type-$1$ job, a type-$2$ job that was already swapped or becomes
the job waiting at the head of the queue.  Note that Nudge-$1$ coincides with Nudge if
we set $x_1=x_2$, $x_3=\infty$ and call the small jobs type-$1$ and the large jobs type-$2$.

The main contributions of the paper can be summarized as follows:
\begin{enumerate}
\item For the system described in Section \ref{sec:system} we derive an {\it explicit} 
expression for the  complementary cumulative distribution function (ccdf) of the waiting time 
(see Theorems \ref{th:W2} and \ref{th:W1a}) and response time distribution (see Theorems \ref{th:R2} and \ref{th:R1a}) of type-$1$ and type-$2$ jobs. To derive these results we
first obtain an explicit expression for the workload in the system (see Theorem \ref{th:Z}). 
\item We derive a simple expression for  the asymptotic tail improvement ratio of Nudge-$K$ over FCFS defined as
\[\mbox{ATIR}(K) = 1 - \lim_{t \rightarrow \infty} \frac{P[R_{Nudge-K} > t]}{P[R > t]},\]
as well as for the value of $K$ that maximizes the ATIR($K$), denoted as $K_{opt}$ (see Theorem \ref{th:ATIR}). 
We identify simple conditions on when the ATIR($K$) > 0, for a given $K$, for all $K$ and
for $K=1$ (see Theorem \ref{th:ATIRcond}). We further prove
that under heavy traffic, the ATIR$(K) > 0$  provided that the mean
type-$2$ job size exceeds the mean type-$1$ job size and that $K_{opt}$ tends to
infinity (see Theorems \ref{th:KoptHT} and \ref{th:Kapprox}).  
\item We present various novel insights on the stochastic and asymptotic improvement upon
FCFS and Nudge in Section \ref{sec:num} using numerical experiments. These show that
stochastic improvements of FCFS exist under far weaker conditions that the one considered
in \cite{nudge}, that Nudge can be stochastically improved upon, 
that setting $K$ too large may imply that Nudge-$K$ no longer stochastically improves
upon FCFS, that an asymptotic
improvement does not necessarily imply a stochastic improvement even if type-$2$
jobs stochastically dominate type-$1$ jobs, etc.
\end{enumerate}
The fact that FCFS can be stochastically improved upon under far weaker conditions is
important as it is much easier in a real system to identify different types of jobs
such that one job type is typically larger and/or has a heavier tail than another type.
In such case implementing an algorithm like Nudge-$K$ using these types may improve all percentiles of
the response time, without the need of having any indication on the size of individual jobs
(being larger or smaller than some threshold) as in \cite{nudge}.

The paper is structured as follows. The exact model considered in the paper
is presented in Section \ref{sec:system}. A matrix exponential expression
for the workload distribution is derived in Section \ref{sec:Z}, while the
mean response time of Nudge-$K$ is analyzed in Section \ref{sec:mean}.
Explicit expressions for the type-$2$ and type-$1$ waiting and response time distributions
are part of Sections \ref{sec:t2} and \ref{sec:t1}, respectively.
Section \ref{sec:ATIR} contains the results for the ATIR, these results are
 the most elegant results in the paper. Numerical examples and insights are discussed in Section \ref{sec:num}.
Conclusions are drawn and possible future work is listed in Section \ref{sec:concl}.

\section{The system}\label{sec:system}
We consider a queueing system with Poisson arrivals with rate $\lambda$. 
Arriving jobs are type-1 jobs with probability $p$, or type-2 jobs
with probability $1-p$.
Job types of consecutive jobs are independent. The processing time $X_i$ of 
a type-$i$ job  follows  an order $n_i$
phase-type distribution  characterized by $(\alpha_i,S_i)$,
that is, $P[X_i > t] = \alpha_i e^{S_i t} \e$,
where $\e$ is a column vector of ones of the appropriate dimension.
Let $E[X_i] = \alpha_i (-S_i)^{-1} \e$ be the mean service time of a type-$i$ job.
We assume without loss of generality that $E[X]=pE[X_1] + (1-p)E[X_2] = 1$, 
with $X=pX_1+(1-p)X_2$ the job size distribution, such that
the load of the system is $\lambda$. For further use, define $s_i^* = (-S_i)\e$,
$\alpha = (p \alpha_1, (1-p) \alpha_2)$ and
\[S = \begin{bmatrix}
S_1 & 0 \\ 0 & S_2
\end{bmatrix},\]
such that $X$ has a phase-type distribution characterized by $(\alpha,S)$.
Note that $\alpha (-S)^{-1}\e = 1$ as $E[X] = 1$.
Denote $\tilde S_i(s) = \alpha_i (sI-S_i)^{-1} (-S_i)\e$, for $i=1,2$, as the
Laplace transform of the size of a type-$i$ job. Let $\tilde S(s)=
p\tilde S_1(s)+(1-p)\tilde S_{2}(s)$ be the Laplace transform of a random job.
As $X$ is a phase-type distribution, it is a class-I distribution.
It is  well known that any general positive-valued distribution can be approximated arbitrary close with a PH distribution  \cite{latouche1}. Further, various fitting algorithms
and tools are available online 
for phase-type distributions (e.g., \cite{feldman98,panchenko1,Kriege2014}).

The scheduling algorithm studied in this paper is called the {\bf Nudge-$K$ algorithm}.
Under this algorithm a type-$1$ job can be swapped with at most $K$ type-$2$ jobs waiting at the back of the queue and any type-$2$ job can be swapped at most once. In other
words, when a type-$1$ job arrives it can pass up to $K$ waiting type-$2$ jobs
at the back of the queue until it either encounters a type-1 job, a 
type-2 job that was already passed by another type-1 job or becomes the job waiting at
the head of the queue. The job that is being served
is never swapped.


\section{Workload and FCFS response time distribution}\label{sec:Z}

We first provide an
{\bf explicit matrix exponential} expression for the workload distribution, which corresponds to the
waiting time distribution in case of FCFS. The workload distribution
does not depend on the scheduling algorithm, as long as it is work-conserving.
Note that if $Y$ is a class-I distribution, the probability $P[Y > t]$ decays exponentially fast and the decay rate $\theta_Y$
can be expressed as $\theta_Y = -\lim_{t \rightarrow \infty} \frac{1}{t}\log P[Y > t]$. 

\begin{theorem}\label{th:Z}
Let $Z$ be the workload in the system, then
\begin{align}\label{eq:Zt}
P[ Z > t] = \lambda \alpha e^{T t}(-S)^{-1}\e = \lambda \beta e^{T t}(-T)^{-1}\e,
\end{align}
with $\beta = (1-\lambda)\alpha$ and
\begin{align}\label{eq:T}
T = S + \lambda \e \alpha.
\end{align}
Let $\theta_i = -\lim_{t\rightarrow \infty} \frac{1}{t}\log P[X_i > t]$
and $\theta_Z = -\lim_{t\rightarrow \infty} \frac{1}{t}\log P[Z > t]$,
then $0 < \theta_Z < \min (\theta_1,\theta_2)$.
\end{theorem}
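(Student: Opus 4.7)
The plan is to tackle the three claims of Theorem~\ref{th:Z} in turn: the matrix form of $P[Z>t]$, the equivalence of its two expressions, and the decay-rate inequalities.

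For the matrix expression, I would exploit that $Z$ is invariant under work-conserving disciplines, reducing the question to a classical M/PH/1 analysis. Using the geometric random-sum decomposition $Z\stackrel{d}{=}\sum_{i=1}^{N} X_i^e$ with $N\sim\text{Geom}(\lambda)$ (i.e.\ $P[N=0]=1-\lambda$) and $X_i^e$ i.i.d.\ copies of the equilibrium service time $\text{PH}(\alpha(-S)^{-1},S)$ (the equilibrium form simplifies because $E[X]=1$), together with the standard random-restart representation of geometric sums of phase-type variables, I obtain
\[
P[Z>t]=\lambda\,\alpha(-S)^{-1}\,e^{(S+\lambda(-S)\e\alpha(-S)^{-1})t}\,\e,\qquad t>0.
\]
A direct matrix calculation, using that $(-S)^{-1}$ commutes with $S$, then yields the similarity $(-S)^{-1}(S+\lambda(-S)\e\alpha(-S)^{-1})(-S)=T$, so $e^{(S+\lambda(-S)\e\alpha(-S)^{-1})t}=(-S)\,e^{Tt}(-S)^{-1}$. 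Substituting and cancelling $(-S)^{-1}(-S)=I$ produces the first equality in \eqref{eq:Zt}.

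For the equivalence with the second form, I would verify the single vector identity $(-T)^{-1}\e=(1-\lambda)^{-1}(-S)^{-1}\e$: applying $-T=-S-\lambda\e\alpha$ to $(-S)^{-1}\e$ and using $\alpha(-S)^{-1}\e=E[X]=1$ gives $(-T)(-S)^{-1}\e=(1-\lambda)\e$, from which $\lambda\alpha e^{Tt}(-S)^{-1}\e=\lambda(1-\lambda)\alpha e^{Tt}(-T)^{-1}\e=\lambda\beta e^{Tt}(-T)^{-1}\e$ follows at once.

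For the bounds $0<\theta_Z<\min(\theta_1,\theta_2)$, my plan is to invoke the Pollaczek--Khinchine formula $\tilde Z(s)=(1-\lambda)s/[s-\lambda+\lambda\tilde S(s)]$, so that $-\theta_Z$ is the right-most singularity of $\tilde Z$ in the left half-plane, i.e.\ the smallest positive root of $g(\theta)=\lambda\tilde S(-\theta)-\lambda-\theta$. On $[0,\min(\theta_1,\theta_2))$ this function is finite, with $g(0)=0$, $g'(0)=\lambda E[X]-1=\lambda-1<0$ by stability, and $g(\theta)\to+\infty$ as $\theta\to\min(\theta_1,\theta_2)^-$ because $\tilde S(-\theta)=p\tilde S_1(-\theta)+(1-p)\tilde S_2(-\theta)$ inherits the pole of whichever $\tilde S_i$ diverges first. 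The intermediate value theorem then places a root $\theta_Z\in(0,\min(\theta_1,\theta_2))$. The main obstacle is the first step, namely identifying the similarity that converts the natural M/PH/1 workload generator $S+\lambda(-S)\e\alpha(-S)^{-1}$ into the compact matrix $T=S+\lambda\e\alpha$; once this is in hand, the vector identity and the intermediate-value argument for $\theta_Z$ are routine.
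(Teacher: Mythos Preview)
Your derivation of the matrix form of $P[Z>t]$ and the equivalence of the two expressions in \eqref{eq:Zt} follows the paper's argument almost verbatim: the same geometric-sum decomposition with the equilibrium service time $\text{PH}(\alpha(-S)^{-1},S)$, the same similarity relation between the generator $S+\lambda s^*\alpha(-S)^{-1}$ and $T=S+\lambda\e\alpha$ (the paper expands the exponential series, you state the conjugation directly, but the content is identical), and the same vector identity $(-T)^{-1}\e=(1-\lambda)^{-1}(-S)^{-1}\e$ (the paper invokes Sherman--Morrison, you verify it by applying $-T$; both amount to the one-line computation $(-T)(-S)^{-1}\e=(1-\lambda)\e$).

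The genuinely different piece is the inequality $0<\theta_Z<\min(\theta_1,\theta_2)$. The paper argues purely at the matrix level: for large $\zeta$, $T+\zeta I$ is primitive non-negative with Perron--Frobenius eigenvalue $\zeta-\theta_Z$, and $S+\zeta I\le T+\zeta I$ with strict inequality in some entry forces every eigenvalue of $S+\zeta I$, in particular $\zeta-\min(\theta_1,\theta_2)$, to have modulus strictly below $\zeta-\theta_Z$. Your route via the Cram\'er--Lundberg equation $g(\theta)=\lambda\tilde S(-\theta)-\lambda-\theta=0$ is equally valid and arguably more transparent probabilistically: convexity of $g$, $g'(0)=\lambda-1<0$, and the blow-up of $\tilde S(-\theta)$ at $\min(\theta_1,\theta_2)$ pin the unique positive root strictly inside the interval. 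The paper's argument is self-contained at the level of the matrices already introduced and avoids identifying $\theta_Z$ with the Lundberg root (which you take for granted); your argument imports a classical $M/G/1$ fact but makes the mechanism behind the strict inequality more visible. Both are short and correct.
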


\begin{proof}
It is well known (see (5.41) on p247 in \cite{gross1}) that the Pollaczek-Khinchin formula for the Laplace transform
of the workload $\tilde Z(s)$ in an M/G/1 queue can be rewritten as
\[\tilde Z(s) = (1-\lambda) \sum_{n=0}^\infty \lambda^n [\tilde S_{Res}(s)]^n,\]
where $\tilde S_{Res}(s)$ is the Laplace transform of the residual service time.
This implies that
\[P[Z > t] = \lambda \sum_{n=1}^\infty \lambda^{n-1}(1-\lambda) P[S_{Res}^{(n*)} > t],\]
where $S_{Res}^{(n*)}$ is the $n$-fold convolution of the residual service time.
The residual service time of a phase-type distribution 
with representation $(\alpha,S)$ and mean $1$, is also phase-type with representation
$(\alpha (-S)^{-1}, S)$. The probability $P[Z > t]$ can therefore be expressed
as $\lambda$ times a geometric sum of the phase-type distribution $(\alpha (-S)^{-1}, S)$, which is again
phase-type with representation $(\alpha (-S)^{-1}, S+\lambda s^* \alpha (-S)^{-1})$. Hence,
\[P[Z>t] = \lambda \alpha (-S)^{-1} e^{(S+\lambda s^*\alpha (-S)^{-1})t}\e.\] 
We now note that
\begin{align*}
(-S)^{-1} &e^{(S+\lambda s^*\alpha (-S)^{-1})t} = (-S)^{-1} 
\sum_{k=0}^\infty (S+\lambda s^*\alpha (-S)^{-1})^k  t^k \\
&= \sum_{k=0}^\infty \left[ (-S)^{-1} (S+\lambda s^*\alpha (-S)^{-1}) (-S)\right]^k t^k (-S)^{-1}\\
&=\sum_{k=0}^\infty (S+\lambda \e \alpha)^k t^k (-S)^{-1} = e^{(S+\lambda \e \alpha)t}(-S)^{-1},
\end{align*} 
as $s^*=-S\e$.
This shows that
\[P[Z>t] = \lambda \alpha e^{(S+\lambda \e\alpha)t} (-S)^{-1}\e.\] 
The second equality in \eqref{eq:Zt} now follows by noting that we can
use the Sherman-Morrison formula to find that
\begin{align}\label{eq:invrel}
(-T)^{-1}\e = (-S)^{-1}\e + \frac{\lambda (-S)^{-1}\e\alpha(-S)^{-1}\e}
{1-\lambda \alpha (-S)^{-1}\e} = \frac{(-S)^{-1}\e}{1-\lambda},
\end{align} 
as $\alpha(-S)^{-1}\e=E[X]=1$.
The fact that $\theta_Z < \min (\theta_1,\theta_2)$ follows by noting
that for $\zeta > \max_i |S_{ii}|$ the matrix $T+\zeta I$ is a primitive
non-negative matrix with Perron-Frobenius eigenvalue $\zeta-\theta_Z$ \cite{seneta1}.
Therefore for any eigenvalue $\beta$ of a matrix $0 \leq B \leq  T+\zeta I$ with
inequality in at least one entry, 
we have $|\beta| < \zeta-\theta_Z$ by \cite[Theorem 1.1(e)]{seneta1}.
Setting $B=S+\zeta I$ therefore implies that the real eigenvalue
$\zeta - \min(\theta_1,\theta_2)$ of $B$
is strictly smaller than $\zeta-\theta_Z$.
\end{proof}

\paragraph{Remark:} $-\theta_Z <0$ is a real eigenvalue of $T$ and for any other
eigenvalue $\xi$ of $T$ we have $Re(\xi) < -\theta_Z$. $-\theta_Z$ may not be
equal to the spectral radius of $T$ (that is, $|\xi| \leq -\theta_Z$ may not hold), 
but $-\theta_Z t$ is the spectral radius of $e^{Tt}$.
Further, as $T$
is irreducible, 
it has a unique right and left eigenvector (up to multiplication by a constant)
associated with $-\theta_Z$ and these eigenvectors can be chosen strictly positive
\cite{seneta1}. 
If we denote these unique eigenvectors as $u_T$ and $v_T^*$ and normalize such that $v_T^* u_T=1$, then
\[\lim_{t \rightarrow \infty} e^{\theta_Z t} e^{Tt} = u_Tv_T^*,\]
when $T$ is irreducible, meaning
\[c_Z = \lim_{t \rightarrow \infty} e^{\theta_Z t} P[Z > t] = \lambda (\beta u_T) (v_T^* (-T)^{-1}\e),\]

To prove the next theorem we rely on the following Lemma:

\begin{lemma}[Theorem 1 in \cite{cvl1}]\label{lem:cvl}
Let 
\[A = \begin{bmatrix}
A_{11} & A_{12} \\ 0 & A_{22}
\end{bmatrix},\]
then
\[e^{At} = \begin{bmatrix}
e^{A_{11}t} & 
\int_0^t e^{A_{11}s} A_{12} e^{A_{22}(t-s)} ds \\ 0 & e^{A_{22}t}
\end{bmatrix}.\]
\end{lemma}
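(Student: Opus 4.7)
The plan is to verify that the right-hand side, which I denote $F(t)$, solves the same matrix initial value problem as $e^{At}$, namely $F'(t)=AF(t)$ with $F(0)=I$, and then appeal to uniqueness of solutions of this linear ODE to conclude $F(t)=e^{At}$.

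The initial condition is immediate: at $t=0$ both diagonal blocks equal $I$, and the upper-right block is $\int_0^0 e^{A_{11}s}A_{12}e^{A_{22}(0-s)}\,ds=0$. For the derivative, the $(1,1)$, $(2,1)$ and $(2,2)$ blocks are routine: differentiation gives $A_{11}e^{A_{11}t}$, $0$, and $A_{22}e^{A_{22}t}$, which match the corresponding blocks of $AF(t)$ since the $(2,1)$ block of $A$ is zero. The only step that requires care is the $(1,2)$ block. The key trick is to rewrite the integral in the equivalent form
\[
F_{12}(t)=\int_0^t e^{A_{11}(t-u)}A_{12}e^{A_{22}u}\,du
\]
via the substitution $u=t-s$. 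This places all of the $t$-dependence inside the left exponential, so Leibniz's rule immediately yields
\[
F_{12}'(t)=A_{12}e^{A_{22}t}+A_{11}\int_0^t e^{A_{11}(t-u)}A_{12}e^{A_{22}u}\,du=A_{11}F_{12}(t)+A_{12}e^{A_{22}t},
\]
which is exactly the $(1,2)$ block of $AF(t)$.

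Combining the four block computations gives $F'(t)=AF(t)$, and together with $F(0)=I$ and the Picard--Lindel\"of theorem this forces $F(t)=e^{At}$ for every $t$. The main obstacle is essentially a bookkeeping one: differentiating $F_{12}(t)$ in its original form $\int_0^t e^{A_{11}s}A_{12}e^{A_{22}(t-s)}\,ds$ produces $e^{A_{11}t}A_{12}+F_{12}(t)A_{22}$, which is not visibly equal to $A_{11}F_{12}(t)+A_{12}e^{A_{22}t}$ (the equivalence of these two expressions is in fact a Sylvester-type identity that can be checked via integration by parts, since in general $A_{11}$ and $A_{22}$ do not commute with $A_{12}$ or with each other). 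Performing the change of variables before differentiating sidesteps this detour and delivers directly the form needed to match $AF(t)$.
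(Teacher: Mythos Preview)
Your proof is correct. The paper does not actually prove this lemma; it is stated with a citation to Van Loan's result (Theorem~1 in \cite{cvl1}) and used as a black box throughout. Your verification via the initial value problem $F'(t)=AF(t)$, $F(0)=I$, together with uniqueness of solutions, is a standard and complete argument. The change of variables $u=t-s$ before differentiating the $(1,2)$ block is a clean way to land directly on $A_{11}F_{12}(t)+A_{12}e^{A_{22}t}$; as you note, differentiating in the original variable instead yields $e^{A_{11}t}A_{12}+F_{12}(t)A_{22}$, which is the same thing but corresponds to the alternative identity $F'(t)=F(t)A$ and would require an extra step to reconcile.
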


\begin{theorem}\label{th:FCFS}
Let $R$ be the response time distribution of a job in case of FCFS, then
\begin{align}
P[ R > t] =  (1-\lambda) \alpha e^{S t} \e  + \lambda
(\beta, 0) e^{Ut} \colvec{(-T)^{-1}\e}{\e},
\end{align}
with
\[U= \begin{bmatrix}
T & \e \alpha \\ 0 & S
\end{bmatrix}.\]
Further, $\theta_Z = -\lim_{t\rightarrow \infty} \frac{1}{t}\log P[R > t]$
and $c_{FCFS} =\lim_{t \rightarrow \infty} e^{\theta_Z t}P[R > t]$ can be expressed as
\[ c_{FCFS} = c_Z \tilde S(-\theta_Z).\]
\end{theorem}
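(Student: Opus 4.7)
My plan is to exploit the standard M/G/1 decomposition $R = Z + X$, where by PASTA an arriving job sees the workload $Z$ with the stationary distribution given in Theorem \ref{th:Z}, and $X$ is the independent size of the tagged job, phase-type with representation $(\alpha, S)$. Conditioning on whether $Z = 0$ (which occurs with probability $1-\lambda$) or $Z > 0$, and using $P[X>u]=1$ for $u<0$, I obtain
\[
P[R > t] = (1-\lambda)P[X>t] + P[Z>t] + \int_0^t P[X > t-z]\,f_Z(z)\,dz.
\]
Here $P[X>t] = \alpha e^{St}\e$ handles the first term directly, and the density $f_Z$ of $Z$ on $(0,\infty)$ follows by differentiating the second form in \eqref{eq:Zt}: using $T(-T)^{-1}\e = -\e$, one obtains $f_Z(z) = \lambda\beta e^{Tz}\e$.

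The key algebraic step is to package the convolution and the $P[Z>t]$ term into one block matrix exponential. Since $\beta e^{Tz}\e$ and $\alpha e^{S(t-z)}\e$ are scalars, their product equals $\beta\,e^{Tz}\,\e\alpha\,e^{S(t-z)}\,\e$, and Lemma \ref{lem:cvl} identifies the upper-right block of $e^{Ut}$ as $\int_0^t e^{Ts}\,\e\alpha\,e^{S(t-s)}\,ds$. Consequently
\[
\int_0^t P[X > t-z]\,f_Z(z)\,dz = \lambda\,\beta\,[e^{Ut}]_{12}\,\e,
\]
and combining this with $P[Z>t] = \lambda\beta e^{Tt}(-T)^{-1}\e$ collapses exactly into $\lambda(\beta,0)\,e^{Ut}\colvec{(-T)^{-1}\e}{\e}$, yielding the stated formula.

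For the tail claims I would switch the order of conditioning and write $P[R>t] = P[X>t] + \int_0^t P[Z > t-x]\,dF_X(x)$. Because $X$ is a mixture of two phase-type distributions with decay rates $\theta_1,\theta_2$ and Theorem \ref{th:Z} gives $\theta_Z < \min(\theta_1,\theta_2)$, the term $P[X>t]$ is $o(e^{-\theta_Z t})$. Multiplying the integral by $e^{\theta_Z t}$ and writing $e^{\theta_Z t}P[Z>t-x] = e^{\theta_Z x}\bigl[e^{\theta_Z(t-x)}P[Z>t-x]\bigr]$, I invoke dominated convergence: the bracket tends to $c_Z$ by the remark following Theorem \ref{th:Z}, while uniform boundedness of $e^{\theta_Z u}P[Z>u]$ for $u \geq 0$, together with $\int_0^\infty e^{\theta_Z x}\,dF_X(x) = \tilde S(-\theta_Z) < \infty$ (finite precisely because $\theta_Z < \min(\theta_1,\theta_2)$), supplies the integrable dominator. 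The limit is then $c_Z\tilde S(-\theta_Z)$, which simultaneously delivers $\theta_Z = -\lim_{t\to\infty}\frac{1}{t}\log P[R>t]$ and the expression for $c_{FCFS}$.

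The main obstacle I anticipate is the algebraic packaging step: recognising that the scalar product $(\beta e^{Tz}\e)(\alpha e^{S(t-z)}\e)$ can be regrouped as $\beta\,e^{Tz}\,\e\alpha\,e^{S(t-z)}\,\e$, so that the convolution integrand matches exactly what Lemma \ref{lem:cvl} produces in the upper-right block of $e^{Ut}$. Once this identification is made, the remaining work — differentiating the workload ccdf to extract $f_Z$, and the dominated-convergence step for the tail — is routine given the strict inequality $\theta_Z < \min(\theta_1,\theta_2)$ from Theorem \ref{th:Z}.
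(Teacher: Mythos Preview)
Your derivation of the explicit formula for $P[R>t]$ is essentially identical to the paper's: both condition on $Z=0$ versus $Z>0$, write the convolution $\int_0^t (\lambda\beta e^{Ts}\e)\,\alpha e^{S(t-s)}\e\,ds$, and invoke Lemma~\ref{lem:cvl} to collapse it together with $P[Z>t]$ into the block exponential $e^{Ut}$.

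For the tail constant $c_{FCFS}$ you diverge slightly. The paper works in the transform domain: it uses $\tilde R(s)=\tilde Z(s)\tilde S(s)$ and applies the Laplace final value theorem (with frequency shifting) to read off $c_{FCFS}=c_Z\,\tilde S(-\theta_Z)$ directly. You instead stay in the time domain, rewrite $P[R>t]=P[X>t]+\int_0^t P[Z>t-x]\,dF_X(x)$, and push the limit through the integral via dominated convergence, using the bound $\sup_{u\ge 0}e^{\theta_Z u}P[Z>u]<\infty$ (which is justified by the remark following Theorem~\ref{th:Z}) and the integrability of $e^{\theta_Z x}\,dF_X(x)$ from $\theta_Z<\min(\theta_1,\theta_2)$. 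Both arguments are correct and of comparable length; the transform route is a bit slicker and is what the paper reuses repeatedly in later theorems (e.g.\ for $c_{R^{(2)}}$ and $c_{R^{(1)}}$), whereas your dominated-convergence route is more self-contained and avoids the time-domain/frequency-shifting bookkeeping.
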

\begin{proof}
The response time $R$ in case of FCFS is given by the
workload $Z$ plus the job size $X$. The density of the workload $Z$ is given by
$\lambda \beta e^{Ts}\e$ due to \eqref{eq:Zt}, hence
\begin{align*}
P[ R > t] &= P[Z > t] + 
\lambda \int_{0}^t \beta e^{Ts} \e 
\alpha e^{S(t-s)} \e ds + (1-\lambda) \alpha e^{St} \e,
\end{align*}
as a job finds a workload equal to zero with probability $1-\lambda$.
The result for $P[R>t]$ now follows using Lemma \ref{lem:cvl}, \eqref{eq:Zt} 
and by combining both matrix exponentials.

As the response time $R$ equals the workload $Z$ plus the service time $X$ which
is independent of the workload, the Laplace transform 
of the response time $\tilde R(s) =
\tilde Z(s) \tilde S(s)$. 
Applying the final value theorem to $e^{\theta_Z t}P[R>t]$ we therefore have
\begin{align*}
c_{FCFS}&=\lim_{t\rightarrow \infty} e^{\theta_Z t}P[R>t]=\frac{1}{\theta_Z}
 \lim_{s \rightarrow 0} s \tilde R(s-\theta_Z) = \frac{1}{\theta_Z}
 \lim_{s \rightarrow 0} s \tilde Z(s-\theta_Z) \tilde S(s-\theta_Z) \\
 &=\lim_{t\rightarrow \infty} e^{\theta_Z t}P[Z>t] \tilde S(-\theta_Z) =
 c_Z \tilde S(-\theta_Z),
\end{align*}
where we used the time-domain integration and frequency shifting properties of
the Laplace transform (in the 2nd and 4th equality).
\end{proof}

We can also argue that
\[c_{FCFS} = \lim_{t \rightarrow \infty} e^{\theta_Z t} P[R > t] = 
\lambda (\beta, 0) u_U v_U^* \colvec{(-T)^{-1}\e}{\e},\]
where $u_U$ and $v^*_U$ are the unique right and left eigenvectors
of $U$ associated with the eigenvalue $-\theta_Z$ such that
$v_U^*u_U = 1$.

\section{Mean Response time of Nudge-$K$}\label{sec:mean}

Let $e_i$ and $e_i^*$ represent the $i$-th column and $i$-th row of 
the size $K+1$ identity matrix, respectively.
\begin{lemma}\label{lem:pswap}
Given that a type-$2$ job sees a workload of $s > 0$ upon arrival, it
is swapped with probability
\begin{align}\label{eq:pswaps}
p_{swap}(s) = e_1^* e^{Ms} e_{K+1},
\end{align}
where $M$ is a size $K+1$ matrix with entry $m_{ij}$ given by
\[m_{ij} = \left\{ \begin{array}{ll}
-\lambda & 1\leq i=j \leq K, \\
\lambda (1-p) & 1 \leq i < K, j=i+1,\\
\lambda p & 1 \leq i \leq K, j=K+1, \\
0 & \mbox{otherwise}
\end{array}\right..
\]
For $K=\infty$, we have $p_{swap}(s)=1-e^{-\lambda p s}$.
\end{lemma}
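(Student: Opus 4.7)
My plan is to reduce the computation of $p_{swap}(s)$ to an absorption probability in a small continuous-time Markov chain whose generator is precisely the matrix $M$ defined in the statement. The key observation is that, given the tagged type-$2$ job sees workload $s$ upon arrival, the Poisson arrival process during its waiting interval is independent of everything in the past, so we only need to analyse which combinations of future type-$1$ and type-$2$ arrivals cause the tagged job to be swapped.

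First I would establish the following dichotomy about the Nudge-$K$ swap mechanic. Let $k$ denote the number of unswapped type-$2$ jobs currently immediately behind the tagged job. If a type-$1$ job arrives while $k < K$, then it cascades through the $k$ unswapped type-$2$s at the back and then swaps with the tagged job, for a total of $k+1 \leq K$ swaps, so the tagged job is swapped. If instead a type-$1$ arrives while $k \geq K$, it exhausts its swap budget on the last $K$ type-$2$s and ends up as a ``blocker'' strictly behind the tagged job, with only (newly) swapped type-$2$s further back. Any subsequent type-$1$ arrival at the back then immediately meets a swapped type-$2$ and must stop, and any subsequent type-$2$ arrivals simply extend the queue behind the blocker, so the tagged job is permanently safe. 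Verifying this safety claim rigorously, including the behaviour after additional type-$2$ arrivals beyond this point, is the main technical step, but it is a direct case analysis of the swap rule.

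Given this dichotomy, the tagged job is swapped iff, during its waiting interval of length $s$, the first type-$1$ arrival occurs before the $K$-th type-$2$ arrival. I would then model the fate of the tagged job as a CTMC on the state space $\{1,\ldots,K+1\}$, where state $i \in \{1,\ldots,K\}$ encodes ``alive with $i-1$ unswapped type-$2$s behind the tagged job'', state $K+1$ is the absorbing ``swapped'' state, and an implicit further absorbing ``safe'' state is suppressed since we only care about reaching $K+1$. From state $i \leq K$, a type-$1$ arrival (rate $\lambda p$) triggers absorption in $K+1$; for $i < K$ a type-$2$ arrival (rate $\lambda(1-p)$) moves us to state $i+1$; for $i = K$ a type-$2$ arrival moves us into the implicit safe state. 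The generator on $\{1,\ldots,K+1\}$ is exactly $M$, the tagged job starts in state $1$, and the standard CTMC formula for transition probabilities gives $p_{swap}(s) = e_1^* e^{Ms} e_{K+1}$.

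Finally, for $K = \infty$, the implicit safe state becomes unreachable: each type-$2$ arrival merely increments $k$, so the only path to absorption is a type-$1$ arrival. Hence the tagged job is swapped iff at least one type-$1$ arrives during $(0,s)$, and since type-$1$ arrivals form a Poisson process of rate $\lambda p$, this probability equals $1 - e^{-\lambda p s}$, as claimed.
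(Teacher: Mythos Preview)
Your proposal is correct and follows essentially the same approach as the paper: both build a small absorbing CTMC whose transient states $1,\ldots,K$ record the number of type-$2$ arrivals seen so far behind the tagged job, with absorption in state $K+1$ upon a type-$1$ arrival and (implicitly or explicitly) absorption in a ``safe'' state once $K$ type-$2$ arrivals have occurred; the generator restricted to $\{1,\ldots,K+1\}$ is exactly $M$, and $p_{swap}(s)$ is the $(1,K+1)$ entry of $e^{Ms}$. The paper makes the safe state explicit by writing the $(K+2)\times(K+2)$ generator $Q$ and then observes that column $K+2$ plays no role in the $(1,K+1)$ entry, whereas you suppress that state from the outset --- a cosmetic difference only. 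If anything, your dichotomy/blocker argument supplies more justification for the characterization ``swapped iff a type-$1$ arrival in $(0,s)$ is preceded by at most $K-1$ type-$2$ arrivals'' than the paper, which simply asserts it.
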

\begin{proof}
Consider the continuous time Markov chain with upper-triangular rate matrix
\begin{align}
Q = \begin{bmatrix}
M & \lambda(1-p)e_{K+1}\\
0 & 0
\end{bmatrix} = \left[\begin{array}{@{}cccc|cc@{}}
-\lambda & \lambda(1-p) & & & \lambda p & \\
 & \ddots & \ddots & &\vdots   & \\
 & & -\lambda &  \lambda (1-p) & \lambda p & \\
  & & & -\lambda  & \lambda p & \lambda (1-p) \\ \hline
  & &  &  & 0 &  \\
  & &  &  &  & 0 
\end{array}\right].
\end{align}
This $K+2$-state Markov chain has two absorbing states: state $K+1$ and $K+2$. 
Entry $(i,j)$ of $e^{Qs}$, with $i \leq j \leq K$, represents the probability
there are exactly $j-i$ type-$2$ arrivals and zero type-$1$ arrivals in
an interval of length $s$. Entry $(i,K+2)$ holds the probability that
more than 
$K-i$ arrivals occur in an interval of length $s$ and the first
$K-i+1$ arrivals are type-$2$, while entry $(i,K+1)$ contains the remaining
probability mass and corresponds to the probability that there is a type-$1$
arrival in an interval of length $s$ that is preceded by at most $K-i$ type-$2$ arrivals.

A type-$2$ job  is swapped if there is
a type-$1$ arrival that is preceded by at most $K-1$ type-$2$ arrivals while
it is waiting. Hence, $p_{swap}(s)$ can be expressed as entry $(1,K+1)$ of
$e^{Qs}$, which is identical to entry $(1,K+1)$ of $e^{Ms}$ and can be written
in matrix form as $e_1^* e^{Ms} e_{K+1}$. 

The expression for $K=\infty$ is immediate by noting that a type-$2$ job is swapped as soon
as a single type-$1$ arrival occurs during its waiting time, irrespective of whether and
where type-$2$ arrivals occur.
\end{proof}

\begin{theorem}\label{cor:ER}
The mean response time $E[R_{\mbox{Nudge-}K}]$ of the Nudge-$K$ algorithm can be expressed as
\begin{align*}
E[R_{\mbox{Nudge-}K}] = E[R] + (1-p) p_{swap}
(E[X_1]-E[X_2]),
\end{align*}
with
\begin{align*}
p_{swap} = -\lambda(\beta \otimes e_1^*) (T\oplus M)^{-1} (\e \otimes e_{K+1}),
\end{align*}
and $E[R] = 1+\lambda \beta T^{-2} \e = 1+\frac{\lambda E[S^2]}{2(1-\lambda)}$.  
When $K=\infty$ we have $p_{swap}=\lambda(1-\beta (\lambda p I-T)^{-1}\e)$.
\end{theorem}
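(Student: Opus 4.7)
The plan is to handle the three ingredients of the statement separately: obtain $E[R]$ directly from Theorem \ref{th:Z}, obtain $p_{swap}$ by conditioning on the workload and using Lemma \ref{lem:pswap}, and obtain the increment $E[R_{\text{Nudge-}K}]-E[R]$ by a swap-accounting argument that exploits work conservation. For the FCFS mean response time I will write $E[R]=E[Z]+E[X]=E[Z]+1$ and integrate the workload tail from Theorem \ref{th:Z}, using $\int_0^\infty e^{Tt}dt=(-T)^{-1}$ to get $E[Z]=\lambda\beta T^{-2}\e$; the alternative form $\lambda E[S^2]/(2(1-\lambda))$ is the standard Pollaczek--Khinchin mean-workload identity.

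For $p_{swap}$ I will condition on the workload seen by an arriving type-$2$ job. By PASTA an arrival sees the stationary workload, and Lemma \ref{lem:pswap} gives the conditional swap probability $e_1^* e^{Ms}e_{K+1}$ for workload $s>0$ (and $0$ with mass $1-\lambda$ at $s=0$, which contributes nothing). Using the workload density $\lambda\beta e^{Ts}\e$ from Theorem \ref{th:Z} together with the Kronecker-product identity
\[
\beta e^{Ts}\e\,\cdot\,e_1^* e^{Ms}e_{K+1} \;=\; (\beta\otimes e_1^*)\,e^{(T\oplus M)s}\,(\e\otimes e_{K+1}),
\]
the unconditional swap probability becomes $\lambda\int_0^\infty(\beta\otimes e_1^*)e^{(T\oplus M)s}(\e\otimes e_{K+1})ds$. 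Since $T$ is stable by Theorem \ref{th:Z} and $M$ is upper triangular with diagonal $-\lambda$, the spectrum of $T\oplus M$ lies in the open left half-plane, so the integral equals $-(T\oplus M)^{-1}$, giving the claimed matrix expression.

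The main step is the identity $E[R_{\text{Nudge-}K}]-E[R]=(1-p)p_{swap}(E[X_1]-E[X_2])$, and I will obtain it by a direct swap-accounting argument. Because Nudge-$K$ is work-conserving, a swap only permutes two consecutive blocks of work and its per-job effect is purely local: if an arriving type-$1$ job swaps past $J$ type-$2$ jobs at the back of the queue, then the type-$1$ job's response time decreases by the sum of the sizes of those $J$ jobs while each of the $J$ type-$2$ jobs sees its response time increase by the size of the type-$1$ job. Since $J$ is determined by the arrival pattern during the type-$1$ job's wait and by job types in the queue (not by the realised sizes of the queued type-$2$ jobs), the expected total swapped workload equals $E[J]E[X_2]$. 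Counting swaps from the two perspectives gives the rate-balance $\lambda p\,E[J]=\lambda(1-p)p_{swap}$, and summing the per-arrival contributions yields
\[
E[R_{\text{Nudge-}K}]-E[R]=-p\,E[J]\,E[X_2]+(1-p)p_{swap}E[X_1]=(1-p)p_{swap}(E[X_1]-E[X_2]).
\]

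The $K=\infty$ formula then follows by substituting $p_{swap}(s)=1-e^{-\lambda ps}$ into $p_{swap}=\int_0^\infty p_{swap}(s)\lambda\beta e^{Ts}\e\,ds$, which splits into $\lambda\beta(-T)^{-1}\e-\lambda\beta(\lambda pI-T)^{-1}\e$; the first term equals $\lambda$ because $\beta(-T)^{-1}\e=(1-\lambda)\alpha(-T)^{-1}\e=1$ by \eqref{eq:invrel}. The main obstacle is making the independence step in the swap-accounting rigorous: one has to justify that in steady state the sizes of queued type-$2$ jobs are independent of the number $J$ of those jobs that an arriving type-$1$ job swaps past, which follows because $J$ depends only on the waiting-time duration and the (unordered) queue composition, both of which are independent of the order statistics of the queued sizes.
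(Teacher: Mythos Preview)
Your proposal is correct and follows essentially the same route as the paper: condition on the workload seen at arrival, use the Kronecker identity $e^{Ts}\otimes e^{Ms}=e^{(T\oplus M)s}$ to collapse the integral for $p_{swap}$, and obtain the mean-response-time increment by accounting for the effect of each swap (the paper phrases this per swap rather than via your rate-balance $\lambda p\,E[J]=\lambda(1-p)p_{swap}$, but the two are equivalent). One small slip: $M$ is not ``upper triangular with diagonal $-\lambda$''---its $(K{+}1,K{+}1)$ entry is $0$---but this does not affect your conclusion, since the eigenvalues of $T\oplus M$ are sums of eigenvalues of $T$ (all with negative real part by Theorem~\ref{th:Z}) and of $M$ (namely $-\lambda$ and $0$), hence still lie in the open left half-plane.
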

\begin{proof}
A swap between a type-$1$ and a type-$2$ job changes the mean response time
by $E[X_1]-E[X_2]$ on average as the response time of the type-$2$ job increases by
$E[X_1]$ on average and that of the type-$1$ job decreases by $E[X_2]$ on average.
The rate at which swaps occur equals $\lambda$ times $(1-p)$ times the probability 
$p_{swap}$ that a random type-$2$ job is swapped. Making use of Theorem
\ref{th:Z} and  Lemma \ref{lem:pswap},
we have
\begin{align*}
p_{swap} &= \lambda \int_0^\infty \beta e^{Ts} \e p_{swap}(s) ds  = \lambda \int_0^\infty (\beta \otimes e_1^*) (e^{Ts} \otimes e^{Ms}) (\e \otimes e_{K+1}) ds\\
&= \lambda \int_0^\infty (\beta \otimes e_1^*) e^{(T \oplus M)s} (\e \otimes e_{K+1}) ds= -\lambda (\beta \otimes e_1^*) (T\oplus M)^{-1} (\e \otimes e_{K+1}).  
\end{align*} 
Finally, the mean response time is found as the mean response time $E[R]$ in case 
of FCFS plus the mean number of swaps that occur per arrival times the
average change $(E[X_1]-E[X_2])$ caused by a swap.    
The mean number of swaps that occur per arrival is clearly given by the
ratio between the swap rate $\lambda (1-p)p_{swap}$ and the arrival rate 
$\lambda$. 

The expression for $E[R]$ in terms of $E[S^2]$ is well known. The other expression 
follows from Theorem \ref{th:Z} and can also be obtained directly using \eqref{eq:invrel} and
the fact that $E[S^2]=2 \alpha (-S)^{-2}\e$.
\end{proof}

\paragraph{Remark:} The mean response time of Nudge-$K$ is smaller than
the mean response time $E[R]$ under FCFS if and only if $E[X_2]> E[X_1]$.
This implies that a stochastic improvement is only possible if
type-$2$ jobs are on average larger than type-$1$ jobs.

The probability $p_{swap}(s)$ is clearly increasing in $K$ and therefore
so is $p_{swap}$. This implies that setting $K=\infty$ minimizes the
mean response time of Nudge-$K$ provided that $E[X_2]>E[X_1]$. 
We will see however that this choice for
$K$ is {\it never} optimal if we focus on the asymptotic tail improvement ratio.

\section{Response time distribution of type-2 jobs}\label{sec:t2}

We now proceed with the waiting time distribution of type-2 jobs.

\begin{theorem}\label{th:W2}
Let $W^{(2)}$ be the waiting time distribution of a type-2 job, then
\begin{align}\label{eq:W2}
P[ W^{(2)} > t] = P[Z > t] + \lambda (\beta \otimes e_1^*, 0) e^{T^{(2)}t} 
\colvec{0_m}{\e},
\end{align}
with $0_m$ a column vector of zeros with the same size as $(T\oplus M)\e$,
\[T^{(2)} = \begin{bmatrix}
T \oplus M & \e \otimes e_{K+1}\alpha_1 \\ 0 & S_1
\end{bmatrix},\]
where $M$ is defined in Lemma \ref{lem:pswap}.

Further, $-\lim_{t\rightarrow \infty} \frac{1}{t}\log P[W^{(2)} > t] = \theta_Z$
and $c_{W^{(2)}} = \lim_{t \rightarrow \infty} e^{\theta_Z t} P[W^{(2))} > t]$ is
given by
\begin{align}\label{eq:cW2}
c_{W^{(2)}}  = (1-p)^K c_Z + (1-(1-p)^K) c_Z \tilde S_1(-\theta_Z).
\end{align}
\end{theorem}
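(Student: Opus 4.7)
My plan is to derive \eqref{eq:W2} by conditioning on the workload $Z$ that an arriving type-2 job sees, and then to extract the asymptotic behavior from the resulting formula.

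By PASTA, an arriving type-2 job sees the time-stationary workload, which by Theorem \ref{th:Z} has density $\lambda\beta e^{Ts}\e$ on $(0,\infty)$ and a point mass $1-\lambda$ at zero. Conditional on seeing workload $s>0$, the job is swapped with probability $p_{swap}(s)=e_1^*e^{Ms}e_{K+1}$ by Lemma \ref{lem:pswap}. If not swapped, no subsequent type-1 job is placed in front of it, so its waiting time equals $s$; if swapped, exactly one type-1 job with an independent service time $X_1\sim(\alpha_1,S_1)$ is inserted ahead of it, so its waiting time equals $s+X_1$. The atom at $s=0$ contributes nothing for $t>0$ since then $p_{swap}(0)=0$ and the waiting time is $0$. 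Unconditioning yields
\[
P[W^{(2)}>t]=\int_0^\infty \lambda\beta e^{Ts}\e\bigl[(1-p_{swap}(s))\,1_{\{s>t\}}+p_{swap}(s)\,\alpha_1 e^{S_1\max(t-s,0)}\e\bigr]ds.
\]
On $\{s>t\}$ the bracket is $1$ (as $\alpha_1 e^{S_1\cdot 0}\e=1$), contributing $P[Z>t]$; what remains is $\int_0^t \lambda \beta e^{Ts}\e\cdot e_1^*e^{Ms}e_{K+1}\cdot \alpha_1 e^{S_1(t-s)}\e\,ds$.

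To put this into matrix form I use the mixed-product property of Kronecker products: because $\beta e^{Ts}\e$ and $e_1^*e^{Ms}e_{K+1}$ are both scalars, their product equals $(\beta\otimes e_1^*)\,e^{(T\oplus M)s}\,(\e\otimes e_{K+1})$. Applying Lemma \ref{lem:cvl} with $A_{11}=T\oplus M$, $A_{12}=(\e\otimes e_{K+1})\alpha_1$, $A_{22}=S_1$ identifies the remaining convolution as the upper-right block of $e^{T^{(2)}t}$; wrapping this with the row $(\beta\otimes e_1^*,0)$ on the left and the column $\colvec{0_m}{\e}$ on the right, and multiplying by $\lambda$, gives exactly \eqref{eq:W2}. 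For the decay rate, the spectrum of $T^{(2)}$ is the union of the spectra of its diagonal blocks $T\oplus M$ and $S_1$. Since $M$ is upper triangular with diagonal $(-\lambda,\ldots,-\lambda,0)$, its eigenvalues are $-\lambda$ (multiplicity $K$) and $0$ (simple), so $T\oplus M$ has a simple dominant eigenvalue $-\theta_Z+0=-\theta_Z$, while every eigenvalue of $S_1$ has real part at most $-\theta_1<-\theta_Z$. Hence the matrix-exponential term decays like $e^{-\theta_Z t}$, matching $P[Z>t]$.

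For the constant $c_{W^{(2)}}$ I rewrite the formula probabilistically as
\[
P[W^{(2)}>t]=E\bigl[(1-p_{swap}(Z))\,1_{\{Z>t\}}\bigr]+E\bigl[p_{swap}(Z)\,1_{\{Z+X_1>t\}}\bigr],
\]
with $X_1$ independent of $Z$. Solving $Mv=0$ by backward substitution gives $v_{K+1}=1$ and $v_i=1-(1-p)^{K+1-i}$ for $i\le K$, and the left eigenvector of $M$ at the eigenvalue $0$ is $e_{K+1}^*$; the remaining eigenvalues of $M$ all equal $-\lambda$, so $p_{swap}(s)=v_1+O(e^{-\lambda s})=(1-(1-p)^K)+O(e^{-\lambda s})$. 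Combining this with $P[Z>t]\sim c_Z e^{-\theta_Z t}$ (remark after Theorem \ref{th:Z}) and with $P[Z+X_1>t]\sim c_Z\tilde S_1(-\theta_Z)e^{-\theta_Z t}$, obtained by repeating the Laplace-transform final-value argument of Theorem \ref{th:FCFS} with $\tilde S$ replaced by $\tilde S_1$, yields \eqref{eq:cW2}. The main subtlety is controlling the error $p_{swap}(s)-(1-(1-p)^K)=O(e^{-\lambda s})$: when integrated against $f_Z(s)\sim \theta_Z c_Z e^{-\theta_Z s}$ or against the joint law of $(Z,Z+X_1)$, it produces contributions of order $e^{-(\theta_Z+\lambda)t}=o(e^{-\theta_Z t})$ and so does not affect the limit constant.
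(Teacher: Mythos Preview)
Your proof is correct and follows essentially the same route as the paper: condition on the workload seen at arrival, use Lemma~\ref{lem:pswap} for the swap probability, rewrite the scalar product $\beta e^{Ts}\e\cdot e_1^*e^{Ms}e_{K+1}$ via the Kronecker sum, and apply Lemma~\ref{lem:cvl} to obtain the block matrix exponential; for the constant you replace $p_{swap}(s)$ by its limit $1-(1-p)^K$ and invoke the final-value argument with $\tilde S_1$ in place of $\tilde S$, exactly as the paper does (the paper uses monotonicity and an $\epsilon$--$t_\epsilon$ cutoff where you use the eigenstructure of $M$, but the content is the same). One minor imprecision: for $K>1$ the eigenvalue $-\lambda$ of $M$ is defective, so the correct bound is $p_{swap}(s)-(1-(1-p)^K)=O(s^{K-1}e^{-\lambda s})$ rather than $O(e^{-\lambda s})$; this polynomial factor is harmless for your $o(e^{-\theta_Z t})$ conclusion but should be stated.
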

\begin{proof}
The waiting time of a type-$2$ job equals the workload $Z$ in the queue when it arrives
plus the workload of a type-$1$ job if the type-$2$ job is swapped.
As $\lambda \beta e^{Ts} \e$ is the density of the workload and a type-$2$ job that
sees a workload of $s$ is swapped with probability $p_{swap}(s)=e_1^* e^{Ms} e_{K+1}$
due to Lemma \ref{lem:pswap}, we get
\begin{align*}
P[ W^{(2)} > t] &= P[Z > t] + \lambda 
\int_0^t \beta e^{Ts} \e (e^*_1 e^{Ms} e_{K+1}) \alpha_1 e^{S_1(t-s)}\e ds,\\
&= P[Z > t] + \lambda 
\int_0^t (\beta \otimes e_1^*) (e^{Ts} \otimes e^{Ms}) (\e \otimes e_{K+1}) \alpha_1 e^{S_1(t-s)}\e ds\\
&= P[Z > t] + \lambda 
\int_0^t (\beta \otimes e_1^*) e^{(T \oplus M) s} (\e \otimes e_{K+1}) \alpha_1 e^{S_1(t-s)}\e ds.
\end{align*}
Applying Lemma \ref{lem:cvl} then yields \eqref{eq:W2}.
The first limit for $t$ tending to infinity follows from noting that
the eigenvalue of $e^{T^{(2)}t}$ with the largest real part
is given by the eigenvalue with the largest real part of 
the matrices $e^{(T \oplus M)t}$ and $e^{S_1 t}$. As $e^{(T \oplus M)t} = e^{Tt} \otimes e^{Mt}$,
the eigenvalues of $e^{(T \oplus M)t}$ are the products of the eigenvalues
of $e^{Tt}$ and $e^{Mt}$. The eigenvalues of $e^{Mt}$ are 
$1$ and  $e^{-\lambda t}$ (with multiplicity $K$), while the eigenvalue with the largest
real part of $e^{Tt}$ equals $-\theta_Z t$ by definition. Hence, $-\theta_Z t$
is the eigenvalue with the largest real part of $e^{(T \oplus M)t}$ and
therefore also of $e^{T^{(2)}t}$ as $\theta_Z < \theta_1$. 

The expression for $c_{W^{(2)}}$ follows from first noting that
$p_{swap}(s)$ is increasing in $s$ and therefore
\[ e^*_1 e^{Ms} e_{K+1} \leq \lim_{s \rightarrow \infty} e^*_1 e^{Ms} e_{K+1} = 1-(1-p)^K. \]
Thus for any $\epsilon > 0$ there exist a $t_\epsilon$ such that
\[1-(1-p)^K -\epsilon < e^*_1 e^{Ms} e_{K+1} < 1-(1-p)^K, \]
for $s > t_\epsilon$. Further, 
\[
\lim_{t \rightarrow \infty} e^{\theta_Z t} \int_0^{t_\epsilon} \beta  e^{Ts} \e (e^*_1 e^{Ms} e_{K+1}) \alpha_1 e^{S_1(t-s)}\e ds = 0, \]
as $\alpha e^{S_1 t}\e$ decays faster than $e^{-\theta_Z t}$. 
This implies that 
\begin{align*}
\lim_{t \rightarrow \infty}& e^{\theta_Z t} P[ W^{(2)} > t] =\\ 
&\lim_{t \rightarrow \infty} e^{\theta_Z t} \left( P[Z >t] + (1- (1-p)^K)
\int_0^t (\lambda\beta e^{Ts}\e) \alpha_1 e^{S_1(t-s)}\e ds \right)=\\ 
&\lim_{t \rightarrow \infty} e^{\theta_Z t} \left( (1-p)^K P[Z > t]+
(1-(1-p)^K) P[Z+X_1 > t]\right),
\end{align*} 
from which the expression for $c_{W^{(2)}}$ follows by using the final value theorem
in the same manner as in the proof of Theorem \ref{th:FCFS}.
\end{proof}

Remark that when $K=\infty$ we can still use the above approach by simply
replacing the matrix $M$ by the $2 \times 2$ matrix
\begin{align*}
M = \begin{bmatrix}
-\lambda p & \lambda p\\
0 & 0.
\end{bmatrix},
\end{align*}
and $e_{K+1}$ by $e_2$.
The same remark applies to Theorem \ref{th:R2}.

\begin{theorem}\label{th:R2}
Let $R^{(2)}$ be the response time distribution of a type-2 job, then
\begin{align}
P[ R^{(2)}& > t] = (1-\lambda) \alpha_2 e^{S_2 t} \e 
+ \lambda (\beta,0) e^{U_2^{(2)}t}\colvec{(-T)^{-1}\e}{\e}
+ \lambda
(\beta \otimes e_1^*,0) e^{U_1^{(2)}t}\colvec{0_m}{\e},
\end{align}
with
\[U_1^{(2)} = \begin{bmatrix}
T \oplus M & \e \otimes e_{K+1}\alpha_1  & -\e \otimes e_{K+1}\alpha_2 \\ 0 & S_1 & s_1^* \alpha_2 \\
0 & 0 & S_2
\end{bmatrix},
\]
and
\[U_2^{(2)} = \begin{bmatrix}
T  & \e \alpha_2  \\  0 & S_2
\end{bmatrix}.
\]
Further, $-\lim_{t\rightarrow \infty} \frac{1}{t}\log P[R^{(2)} > t] = \theta_Z$
and $c_{R^{(2)}} = \lim_{t \rightarrow \infty} e^{\theta_Z t} P[R^{(2)} > t]
= c_{W^{(2)}}\tilde S_2(-\theta_Z)$.
\end{theorem}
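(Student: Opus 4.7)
The plan is to condition on the workload $Z=s$ seen by the type-$2$ job upon arrival and on whether it is subsequently swapped, and then to reassemble the resulting integrals using Lemma~\ref{lem:cvl}. With probability $1-\lambda$ the queue is empty and $R^{(2)}=X_2$, giving the first term $(1-\lambda)\alpha_2 e^{S_2 t}\e$. When $Z=s>0$, the workload density is $\lambda\beta e^{Ts}\e$ by Theorem~\ref{th:Z}, the job is swapped with probability $p_{swap}(s)=e_1^* e^{Ms}e_{K+1}$ by Lemma~\ref{lem:pswap}, and conditionally $R^{(2)}=s+X_2$ without a swap while $R^{(2)}=s+X_1+X_2$ with a swap (with $X_1,X_2$ independent of $Z$ and of the swap event). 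This yields
\[
P[R^{(2)}>t] = (1-\lambda)P[X_2>t] + \lambda\int_0^\infty \beta e^{Ts}\e\bigl((1-p_{swap}(s))P[X_2>t-s] + p_{swap}(s)P[X_1+X_2>t-s]\bigr)ds.
\]

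I would then split this into a baseline (as if no swap ever occurred) plus a swap correction. The baseline $\lambda\int_0^\infty \beta e^{Ts}\e\,P[X_2>t-s]\,ds$ equals $P[Z>t]+\lambda\int_0^t \beta e^{Ts}\e\,\alpha_2 e^{S_2(t-s)}\e\,ds$, and Lemma~\ref{lem:cvl} applied to $U_2^{(2)}$, together with the identity $P[Z>t]=\lambda\beta e^{Tt}(-T)^{-1}\e$ from Theorem~\ref{th:Z}, recognises this as $\lambda(\beta,0)e^{U_2^{(2)}t}\colvec{(-T)^{-1}\e}{\e}$, matching the second claimed term. For the swap correction
\[
\lambda\int_0^t \beta e^{Ts}\e\,p_{swap}(s)\bigl(P[X_1+X_2>t-s]-P[X_2>t-s]\bigr)ds,
\]
I would first write $P[X_1+X_2>u]=(\alpha_1,0)e^{Vu}\colvec{\e}{\e}$ for $V=\bigl[\begin{smallmatrix}S_1 & s_1^*\alpha_2 \\ 0 & S_2\end{smallmatrix}\bigr]$ by another application of Lemma~\ref{lem:cvl}, and use the Kronecker identity $(\beta\otimes e_1^*)e^{(T\oplus M)s}(\e\otimes e_{K+1})=\beta e^{Ts}\e\cdot p_{swap}(s)$ already exploited in Theorem~\ref{th:W2}. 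A final application of Lemma~\ref{lem:cvl} to $U_1^{(2)}$ viewed as a $2\times 2$ block matrix (top block $T\oplus M$, bottom block $V$) collapses the resulting nested integral to $\lambda(\beta\otimes e_1^*,0)e^{U_1^{(2)}t}\colvec{0_m}{\e}$; the minus sign on the block $-\e\otimes e_{K+1}\alpha_2$ inside $U_1^{(2)}$ is precisely what produces the $-P[X_2>t-s]$ subtraction.

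For the tail statements, since $R^{(2)}=W^{(2)}+X_2$ with $X_2$ independent of $W^{(2)}$, one has $\tilde R^{(2)}(s)=\tilde W^{(2)}(s)\tilde S_2(s)$, and the final-value-theorem calculation used in the proof of Theorem~\ref{th:FCFS} applies verbatim to give $c_{R^{(2)}}=c_{W^{(2)}}\tilde S_2(-\theta_Z)$. The decay rate equals $\theta_Z$ because $U_1^{(2)}$ and $U_2^{(2)}$ are upper block triangular with diagonal blocks drawn from $T\oplus M$, $S_1$, $S_2$, whose dominant eigenvalues are $-\theta_Z$, $-\theta_1$, $-\theta_2$ (the first by the same eigenvalue argument used in Theorem~\ref{th:W2}), and $\theta_Z<\min(\theta_1,\theta_2)$ by Theorem~\ref{th:Z}. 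The main obstacle I anticipate is the bookkeeping of the swap correction: one must verify carefully that the minus-signed block together with the right-vector $\colvec{0_m}{\e}$ (whose lower $\e$ has combined length $n_1+n_2$) extract exactly the desired double convolution, and that the diagonal $S_1$, $S_2$ exponentials of $U_1^{(2)}$ do not contribute any stray terms.
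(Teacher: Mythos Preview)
Your proposal is correct and follows essentially the same route as the paper's proof: condition on the workload and on whether the type-$2$ job is swapped (the paper writes this as \eqref{eq:R2}), then regroup the resulting integrals via Lemma~\ref{lem:cvl} into the $U_2^{(2)}$ and $U_1^{(2)}$ matrix exponentials, and obtain $c_{R^{(2)}}=c_{W^{(2)}}\tilde S_2(-\theta_Z)$ from $R^{(2)}=W^{(2)}+X_2$ and the final value theorem. Your baseline/swap-correction split is exactly the ``combining some of the matrix exponentials'' step the paper alludes to, and your concern about stray diagonal contributions is unfounded since the left vector $(\beta\otimes e_1^*,0)$ annihilates the lower diagonal blocks.
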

\begin{proof}
The result follows from noting that
\begin{align}\label{eq:R2}
P[ R^{(2)} > t] &= P[Z > t] + \lambda 
\int_0^t \beta e^{Ts} \e (e^*_1 e^{Ms} e_{K+1}) 
P[X_1+X_2 > t-s] ds,\nonumber\\
&+(1-\lambda) \alpha_2 e^{S_2 t} \e  +\lambda \int_0^t \beta e^{Ts} \e (1-e^*_1 e^{Ms} e_{K+1}) 
\alpha_2 e^{S_2(t-s)}\e ds,
\end{align}
with 
\[P[X_1+X_2 > t-s]=(\alpha_1,0) e^{\begin{bmatrix}
S_1 & s_1^* \alpha_2 \\
0 & S_2 \end{bmatrix}(t-s)}\e.\]
The result now follows 
using Lemma \ref{lem:cvl} and combining some of the matrix exponentials.
An alternate proof exists in making use of the fact that
\begin{align*}
P[R^{(2)} > t] = (1-\lambda) &\alpha_2 e^{S_2 t} \e + P[W^{(2)} > t] + \int_0^t \left( -\frac{\partial}{\partial s} P[W^{(2)} > s] \right)
\alpha_2 e^{S_2 (t-s)} \e ds.
\end{align*}
The equality   $c_{R^{(2)}}=c_{W^{(2)}}\tilde S_2(-\theta_Z)$ follows from the
final value theorem.
\end{proof}

Apart from \eqref{eq:cW2} we can also express $c_{W^{(2)}}$ as
\begin{align*}
c_{W^{(2)}} &= c_Z + \lambda (\beta \otimes e_1^*, 0) u_{T^{(2)}} v^*_{T^{(2)}}
\colvec{0_m}{\e},
\end{align*}
where $u_{T^{(2)}}$ and $v^*_{T^{(2)}}$ are the unique right and left eigenvectors
of $T^{(2)}$ associated with the eigenvalue $-\theta_Z$ such that
$v_{T^{(2)}}^*u_{T^{(2)}} = 1$.


\section{Response time distribution of type-1 jobs}\label{sec:t1}
In this section we present an approach for the waiting time and response time distribution
of a type-1 job.
The approach exists in computing the {\bf workload} distribution seen by a type-1 job from the {\bf queue length} distribution of the FCFS queue. The following observation make this possible.
\begin{enumerate}
\item Whenever a type-2 job is waiting in the FCFS queue, it is also waiting in
the Nudge-$K$ queue.
\item If the FCFS queue contains $i$ type-$2$ jobs waiting at the back of the queue,
these $i$ jobs  are also waiting at the back of the Nudge-$K$ queue (as there have been no type-$1$
arrivals after these type-$2$ arrivals).  
\item If the $i+1$ jobs waiting at the back of the FCFS queue are a type-$1$ job, say job $j$, followed by $i$ type-$2$ jobs, then any new type-$1$ arrival passes exactly $\min(i,K)$ type-$2$
jobs in the Nudge-$K$ queue. Note that job $j$ may have passed one or multiple type-$2$
jobs in the Nudge-$K$ queue, but in such case the $(i+1)$-th last job in the Nudge-$K$ queue
is a type-$2$ job that was already swapped.     
\end{enumerate}   
Hence, we conclude that the number of type-$2$ jobs that a tagged type-$1$
job passes under Nudge-$K$ is equal to the minimum of $K$ and the number of type-$2$ jobs that are
{\bf waiting at the back of the FCFS queue} when the tagged type-$1$ job arrives.
Note that this argument fails if a type-$2$ job can be passed by more than one type-$1$ job,
which is not the case under Nudge-$K$.

\begin{theorem}\label{th:W1a}
Let $W^{(1)}$ be the waiting time distribution of a type-1 job.
Let $R=-\lambda (S-\lambda I +\lambda \e \alpha)^{-1}$ and
$\pi_1 = (1-\lambda) \alpha R$.
Further define
\begin{align} \label{eq:pi01}
\pi_0^{(1)} &= \pi_1 (I-(1-p)^{K+1}R^{K+1})(I-(1-p)R)^{-1},\\
\pi_1^{(1)} &= \pi_1 (1-p)^K R^{K+1} ,\label{eq:pi11} \\
\pi_0^{(2)} &= \pi_1 R (I-(1-p)^K R^K)(I-(1-p)R)^{-1} p,\label{eq:pi02}
\end{align}
then
\begin{align}\label{eq:PW1_th}
P[ W^{(1)} > t] &= \nu_1 \mathlarger{e}^{\mathlarger{(S^\top \otimes I + (s^* \alpha)^\top \otimes R) t}} \xi + (\pi_0^{(1)}-\pi_1^{(1)}R^{-1}) e^{St}\e - \pi_0^{(2)}R^{-1} e^{S t} \colvec{\e/p}{0},
\end{align}
where $X^\top$ denotes the transposed of $X$,
\begin{align*}
\nu_1 &= \e^\top \otimes [(\pi_1^{(1)}+\pi_0^{(2)}) (I-R)^{-1}+\pi_1^{(1)}R^{-1}] + (\e^\top,  0)/p \otimes \pi_0^{(2)} R^{-1}, 
\end{align*}
and $\xi$ is a size $(n_1+n_2)^2$ vector obtained from stacking the columns of the
size $n_1+n_2$ unity matrix.

Further, $-\lim_{t\rightarrow \infty} \frac{1}{t}\log P[W^{(1)} > t] = \theta_Z$
and $c_{W^{(1)}} = \lim_{t \rightarrow \infty} e^{\theta_Z t} P[W^{(1))} > t]$ is
given by
\begin{align}\label{eq:cW1}
c_{W^{(1)}}  = c_Z  (1-p)^{K} \tilde S(-\theta_Z)^{-K} + c_Z
p \frac{\tilde S_1(-\theta_Z)}{\tilde S(-\theta_Z)} \frac{1 - (1-p)^K \tilde S(-\theta_Z)^{-K}}{1 - (1-p) \tilde S(-\theta_Z)^{-1}}.
\end{align}
\end{theorem}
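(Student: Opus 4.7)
The plan is to exploit the three-part observation preceding the theorem: under Nudge-$K$ the number of type-2 jobs that a tagged type-1 arrival passes equals $\min(K,J)$, where $J$ is the number of type-2 jobs at the back of the FCFS queue seen by the tagged arrival. I would first compute the joint stationary distribution (by PASTA) of the workload together with the back-of-queue configuration in the FCFS queue, and then express the Nudge-$K$ type-1 waiting time as the sum of the service times of the jobs that remain in front of the tagged arrival after it has completed its swaps.

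The first ingredient is the matrix-geometric description of the FCFS M/PH/1 queue: the stationary joint distribution of (queue length, phase of the job in service) satisfies $\pi_k = \pi_1 R^{k-1}$ for $k\ge 1$, with $\pi_1=(1-\lambda)\alpha R$ and $\pi_0=1-\lambda$. One verifies that the given $R = -\lambda(S-\lambda I + \lambda\e\alpha)^{-1}$ is the minimal nonnegative solution of the Neuts QBD rate equation for this chain by direct substitution (using $RS\e = -\lambda\e$, which follows by right-multiplying the defining identity by $\e$). Because arrivals are Poisson and types are i.i.d., conditional on the queue containing $k-1$ waiters, the types of those waiters are i.i.d.\ Bernoulli$(p)$, independent of the service phase; hence $P(J=j\mid k\text{ in system}) = p(1-p)^j$ for $0\le j<k-1$ and $(1-p)^{k-1}$ for $j=k-1$.

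Next I would partition the back-of-queue pattern into three cases that produce the three coefficient vectors in \eqref{eq:pi01}--\eqref{eq:pi02}. In Case A a type-1 sits $j+1$ positions from the back with $0\le j\le K-1$ type-2 jobs behind it, so the tagged passes all $j$; summing $\pi_1R^{k-1}$ over levels with the weight $p(1-p)^j$ and applying the finite geometric identity $\sum_{j=0}^{K-1}(1-p)^j R^j = (I-(1-p)^K R^K)(I-(1-p)R)^{-1}$ produces $\pi_0^{(1)}$. In Case B there are at least $K$ type-2 jobs at the back, exactly $K$ are passed, and the $(K+1)$-st-from-the-back job may be of either type, yielding $\pi_1^{(1)}$. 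Case C handles the remaining boundary bookkeeping where a type-1 lies one position deeper in the queue and contributes $\pi_0^{(2)}$. The residual service time plus the sum of service times of the front-of-queue jobs is a phase-type random variable whose generator on the product service-phase $\times$ level-phase space is the stated $S^\top\otimes I + (s^*\alpha)^\top\otimes R$, with $\xi$ the vectorized identity collapsing the Kronecker structure into a scalar probability and $\nu_1$ the initial distribution assembled from the three cases; this yields the first term of \eqref{eq:PW1_th}. The correction terms $(\pi_0^{(1)}-\pi_1^{(1)}R^{-1})e^{St}\e$ and $-\pi_0^{(2)}R^{-1}e^{St}\colvec{\e/p}{0}$ collect the boundary contributions from low levels where the geometric series truncates, with the column $\colvec{\e/p}{0}$ projecting onto the type-1 phases.

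For the tail and the constant $c_{W^{(1)}}$ the argument follows that of Theorems \ref{th:Z} and \ref{th:W2}: the Perron eigenvalue of $S^\top\otimes I+(s^*\alpha)^\top\otimes R$ equals $-\theta_Z$ because $R$ has Perron eigenvalue $\tilde S(-\theta_Z)^{-1}$ (the standard matrix-analytic identity $\rho_R \tilde S(-\theta_Z) = 1$ linking $R$ to the service-time Laplace transform), and applying the final-value theorem to each of the three contributions reduces them to geometric series in $\tilde S(-\theta_Z)^{-1}$ that telescope into \eqref{eq:cW1}: the $(1-p)^K \tilde S(-\theta_Z)^{-K}$ factor is the Case-B contribution and the second summand consolidates Cases A and C. The main obstacle will be the case decomposition: matching each back-of-queue pattern to the correct coefficient vector $\pi_0^{(1)}$, $\pi_1^{(1)}$, $\pi_0^{(2)}$ and to the type-1 projection $\colvec{\e/p}{0}$ while correctly handling the boundary at levels $k\le K$ where the geometric series is finite; once this accounting is pinned down the Kronecker-sum exponent and the final-value-theorem reduction are standard.
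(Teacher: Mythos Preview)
Your overall strategy---matrix-geometric FCFS stationary law, a back-of-queue case decomposition yielding the vectors $\pi_0^{(1)},\pi_1^{(1)},\pi_0^{(2)}$, the Ozawa-type identity producing the Kronecker-sum exponential, and a final-value-theorem argument for the tail---is exactly the paper's approach. But your case decomposition is wrong as written, and the error is not just a labeling slip.

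Your Case~A (``a type-1 sits $j{+}1$ positions from the back with $0\le j\le K{-}1$ type-2 jobs behind it'', weighted by $p(1-p)^j$, with the finite sum $\sum_{j=0}^{K-1}((1-p)R)^j$) is precisely the case producing $\pi_0^{(2)}$, not $\pi_0^{(1)}$: compare your sum with \eqref{eq:pi02}. The case that actually produces $\pi_0^{(1)}$ is absent from your description: it is the event that the FCFS queue holds $1\le k\le K{+}1$ jobs and \emph{all} $k{-}1$ waiters are type-2, so the tagged job reaches the head of the waiting line; summing $\pi_k(1-p)^{k-1}$ over $k=1,\ldots,K{+}1$ gives \eqref{eq:pi01}. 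Your Case~B is correct for $\pi_1^{(1)}$.

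The more substantive point you are missing is \emph{why} the decomposition splits into two reduced queue-length laws rather than one. In the paper's $Q_1$ (tagged passes $K$ type-2 jobs, or reaches the head), the waiting time is $X^{(q*)}+R_i$: $q$ generic jobs plus a residual. In $Q_2$ (tagged is blocked by a waiting type-1 after $<K$ swaps), the job immediately in front of the tagged is \emph{known} to be type-1, so the waiting time is $X^{(q*)}+X_1+R_i$. This is the reason for two separate sums in the proof and for the projection $\binom{\e/p}{0}$: the extra $X_1$ is handled by noting that $P[X^{(q*)}+R_i\le t,\ X^{(q*)}+X_1+R_i>t]$ equals $\sum_{j\le n_1}P(q{+}1,t)_{ij}/p$, i.e.\ the $(q{+}1)$-st renewal lands in a type-1 phase. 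Your sketch treats the three $\pi$-vectors as feeding one common workload formula with correction terms, which is not the structure.

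For $c_{W^{(1)}}$ the paper does not go through the Perron eigenvalue of $R$. It argues that any fixed-$q$ term decays at rate $\min(\theta_1,\theta_2)>\theta_Z$, so asymptotically the number $N$ of type-2 jobs passed is truncated geometric ($P[N=k]=p(1-p)^k$ for $k<K$, $P[N=K]=(1-p)^K$); it then conditions the workload tail on the types of the last $k{+}1$ (resp.\ $K$) jobs using ratios $\tilde S_2(-\theta_Z)^k\tilde S_1(-\theta_Z)/\tilde S(-\theta_Z)^{k+1}$ (resp.\ $\tilde S_2(-\theta_Z)^K/\tilde S(-\theta_Z)^K$), and the $\tilde S_2$ factors cancel against the $\tilde S_2(-\theta_Z)^{-k}$ coming from the removed type-2 service times, yielding \eqref{eq:cW1}. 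Your proposed route via $\rho_R=\tilde S(-\theta_Z)^{-1}$ may be workable, but note that $S^\top\otimes I+(s^*\alpha)^\top\otimes R$ is not a Kronecker sum, so its spectrum is not simply $\{\lambda_i(S)+\lambda_j(\cdot)\}$; the paper instead identifies the decay rate by recognizing $\pi_1\sum_k R^kP(k,t)\e=P[Z>t]$.
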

\begin{proof}
Let $P[Q^{(FCFS)}=(q,i)]$ be the steady state probability that the FCFS queue
contains $q$ jobs and the server is in service phase $i \in \{1,2,\ldots,n_1+n_2\}$, for $q > 0$. The probability that
the queue is empty is clearly $1-\lambda$.
It is well known \cite{neuts2} that 
\begin{align*}
P[Q^{(FCFS)}=(q,i)] = (\pi_1 R^{q-1})_i,
\end{align*} 
with $R=-\lambda (S-\lambda I +\lambda \e \alpha)^{-1}$ and
$\pi_1 = (1-\lambda) \alpha R$.
    
As noted earlier, the workload seen by a tagged type-1 job corresponds to the work present at
a random point in time in the FCFS queue if we remove up to $K$ type-2 jobs
that are waiting at the back of the FCFS queue. Note that if fewer than $K$ jobs are removed
because of the presence of another type-1 job, then this
should be taken into account when computing the workload. 

We now define two reduced queue length distributions. The first $Q_1$ 
corresponds to the case where either 
\begin{enumerate}
\item The FCFS queue contains $1 \leq k \leq K+1$ jobs upon arrival of the tagged type-$1$ job
and the $k-1$ waiting jobs
are type-$2$ jobs (that are swapped with the 
tagged type-$1$ job by the Nudge-$K$ algorithm). This occurs when the FCFS server is in phase $i$ 
with probability $(\pi_k)_i (1-p)^{k-1}$.   
\item The FCFS queue contains exactly $q+1+K$ jobs, with $q>0$, upon arrival of the
tagged type-$1$ job and
the last $K$ waiting jobs are type-$2$ jobs (that are swapped with the 
tagged type-$1$ job). This happens when the FCFS server is in phase $i$ with
probability $(\pi_{K+1+q})_i (1-p)^K$.

\end{enumerate}
In both cases the {\bf workload} seen by the tagged type-$1$ job is nonzero and
corresponds to the sum of $q \geq 0$ jobs plus a remaining service time which if the
current 
phase equals $i$, has a phase-type distribution given by $(e_i,S)$.
Hence,
\begin{align}\label{eq:PQ1}
P[Q_1 = (q,i)] = 1[q=0] \sum_{k=1}^{K+1} (\pi_k)_i  (1-p)^{k-1}
+ 1[q > 0] (\pi_{K+1+q})_i (1-p)^K.
\end{align}
The second $Q_2$ corresponds to the case where the tagged job is 
swapped with $0 \leq k < K$ jobs and there is at least one type-$1$
job waiting in the FCFS queue. In this case the workload consists of 
the sum of $q \geq 0$ jobs, one type-1 job and a remaining service time with
phase-type distribution $(e_i,S)$, if the server is in phase $i$:
\begin{align}\label{eq:PQ2}
P[Q_2 = (q,i)] = \sum_{k=0}^{K-1} (\pi_{q+k+2})_i  (1-p)^{k}p.
\end{align}
Note that $Q_1$ and $Q_2$ have a matrix geometric form with the same
rate matrix $R$ as the FCFS queue. More specifically,
let the vectors $\pi_q^{(j)}$ contain the probabilities $P[Q_j = (q,i)]$,
for $j=1,2$, then $\pi_q^{(1)} = \pi_1^{(1)} R^{q-1}$ for $q > 0$ and
   $\pi_q^{(2)} = \pi_0^{(2)} R^{q}$ for $q \geq 0$. 
The expressions in \eqref{eq:pi01}, \eqref{eq:pi11} and \eqref{eq:pi02}
follow from \eqref{eq:PQ1} and \eqref{eq:PQ2}.
   
The probability $P[W^{(1)} > t]$ that the workload seen by a tagged
type-1 job exceeds $t$ can now be computed as
 \begin{align}\label{eq:PW1t}
 P[W^{(1)} > t] &= \sum_{q=0}^\infty 
 P[Q^{(1)}=(q,i)] P[X^{(q*)} + R_i >t] \nonumber \\
 &+\sum_{q=0}^\infty 
P[Q^{(2)}=(q,i)] P[X^{(q*)} + X_1 + R_i >t],
\end{align}    
where $R_i$ is a random variable with phase-type distribution $(e_i,S)$, 
$X^{(q*)}$ is the sum of $q$ independent copies of the job size with
phase-type representation $(\alpha,S)$
 and $X_1$ is the type-$1$ job size with phase-type representation $(\alpha_1,S_1)$.

The probabilities $P[X^{(q*)} + R_i >t]$ and
$P[X^{(q*)} + X_1 + R_i >t]$ can be expressed using the following two observations.
$P[X^{(q*)} + R_i >t]$ is the probability that there are less than
$q+1$ renewals in $[0,t]$ for the phase-type renewal process with
inter-renewal time $(\alpha,S)$ that starts in
phase $i$ at time zero. The probability $P[X^{(q*)} + X_1 + R_i >t]$ can be expressed as
$P[X^{(q*)} + R_i >t] + P[X^{(q*)} + R_i < t, X^{(q*)} + R_i + X_1 > t]$.
Let $P(k,t)$ be the matrix such that entry $(i,j)$ contains the probability
that $k$ renewals occur in $[0,t]$ given that the initial phase equals $i$
and the phase at time $t$ is $j$ of the phase-type renewal process with 
inter-renewal time $(\alpha,S)$. Then,
\begin{align*}
\sum_{q=1}^\infty & P[Q^{(1)}=(q,i)] P[X^{(q*)} + R_i >t] 
= \sum_{q=1}^\infty \pi_1^{(1)} R^{q-1} \sum_{k=0}^q
P(k,t) \e \\
&= \pi_1^{(1)}  \sum_{q=1}^\infty R^{q-1} \sum_{k=0}^{q-1} P(k,t) \e
+ \pi_1^{(1)}R^{-1}  \sum_{q=1}^\infty R^q P(q,t) \e\\
&= \pi_1^{(1)} (I-R)^{-1} \sum_{k=0}^{q-1} R^k P(k,t) \e
+ \pi_1^{(1)}R^{-1}  \left(\sum_{k=0}^\infty R^k P(k,t) \e - P(0,t)\e \right)\\
&=\pi_1^{(1)} ((I-R)^{-1} +R^{-1})  \sum_{k=0}^\infty R^k P(k,t) \e -
\pi_1^{(1)}R^{-1} e^{St} \e. 
\end{align*}
Using the same reasoning as in the proof of \cite[Theorem 2]{ozawa1}, we can show
that 
\begin{align}\label{eq:ozawa}
a \sum_{k=0}^\infty R^k P(k,t) b= (b^\top \otimes a) e^{(S^\top \otimes I + (s^* \alpha)^\top \otimes R) t} \xi,
\end{align}
for any row vector $a$ and column vector $b$ as
\begin{align*}
\frac{\partial}{\partial t} P(0,t) &= SP(0,t), \\
\frac{\partial}{\partial t} P(k,t) &= SP(k,t)+s^*\alpha P(k-1,t),
\end{align*}
with $P(0,0)=I$ and $P(k,0)=0$ for $k > 0$. 
Setting $b=\e$ and $a=\pi_1^{(1)} ((I-R)^{-1} +R^{-1})$ implies
that
\begin{align}\label{eq:PWt1_sum1}
\sum_{q=0}^\infty & P[Q^{(1)}=(q,i)] P[X^{(q*)} + R_i >t] =
(\pi_0^{(1)}-\pi_1^{(1)}R^{-1}) e^{St}\e \nonumber \\
&+ (\e^\top \otimes \pi_1^{(1)} ((I-R)^{-1} +R^{-1})) e^{(S^\top \otimes I + (s^* \alpha)^\top \otimes R) t} \xi.
\end{align}
For the second sum in \eqref{eq:PW1t} it is worth noting that the events
$X^{(q*)} + R_i \leq t$ and $X^{(q*)} + X_1 + R_i >t$ occur simultaneously if there are
exactly $q+1$ renewals for the phase-type renewal process in $[0,t]$ starting from
phase $i$ given that the $(q+2)$-th inter-renewal time corresponds to a type-$1$ job.
In other words $P[X^{(q*)} + R_i \leq t, X^{(q*)} + X_1 + R_i >t] = \sum_{j=1}^{n_1}
P(q+1,t)_{ij}/p$ as the first $n_1$ phases correspond to a type-$1$ job and the
fraction of type-$1$ jobs equals $p$.
With this observation we can 
express the second sum in \eqref{eq:PW1t} as
\begin{align*}
\sum_{q=0}^\infty & P[Q^{(2)}=(q,i)] P[X^{(q*)} + X_1 + R_i >t] \\
&=\sum_{q=0}^\infty  \sum_i (\pi_0^{(2)} R^q)_i \left(P[X^{(q*)} + R_i > t]
 +
P[X^{(q*)} + R_i \leq t, X^{(q*)} + X_1 + R_i >t]\right)\\
&=\sum_{q=0}^\infty   \pi_0^{(2)} R^q\left(\sum_{k=0}^q P(k,t) \e+
P(q+1,t) \colvec{\e/p}{0}\right)\\
&= \pi_0^{(2)} (I-R)^{-1} \sum_{k=0}^\infty R^kP(k,t) \e+
 \pi_0^{(2)} R^{-1} \left(\sum_{k=0}^\infty R^{k} P(k,t) - P(0,t)\right) \colvec{\e/p}{0}.
\end{align*}
Due to \eqref{eq:ozawa} we have
\begin{align}\label{eq:PWt1_sum2}
\sum_{q=0}^\infty & P[Q^{(2)}=(q,i)] P[X^{(q*)} + X_1 + R_i >t] \nonumber\\
&= [(\e^\top \otimes \pi_0^{(2)} (I-R)^{-1}) + ((\e^\top/p, 0)  \otimes \pi_0^{(2)}R^{-1}) ]
e^{(S^\top \otimes I + (s^* \alpha)^\top \otimes R) t} \xi  -\pi_0^{(2)}R^{-1} e^{St}\colvec{\e/p}{0}.
\end{align}
Combining \eqref{eq:PW1t}, \eqref{eq:PWt1_sum1} and \eqref{eq:PWt1_sum2} yields \eqref{eq:PW1_th}.

As the workload decays at rate $\theta_Z$ and $\pi_1 \sum_{k=0}^\infty R^k P(k,t) \e$ is
the probability that the workload exceeds $t$, \eqref{eq:ozawa} implies
that $e^{(S^\top \otimes I + (s^* \alpha)^\top \otimes R) t} $ decays at rate $\theta_Z$,
while the other terms decay faster.
The expression for $c_{W^{(1)}}$ can be derived by noting that for any vector $a$
we have
\[a \sum_{k=0}^q P(k,t) \e = (a,0) e^{\Omega t} \e,\] 
where the block matrix 
\[\Omega = \begin{bmatrix}
S & s^* \alpha & & \\
 & S & \ddots & \\
& & \ddots & s^* \alpha \\ & & & S
\end{bmatrix},\]
has $q+1$ diagonal blocks. We therefore
have that $a \sum_{k=0}^q P(k,t) \e$ for any fixed $q$ has a decay rate equal
to $\min(\theta_1,\theta_2)$. The probability $P[R_i > t]$ also decays at this
rate. We may therefore when computing $\lim_{t \rightarrow \infty} e^{\theta_Z t}
P[W^{(1)}>t]$ start the summations in \eqref{eq:PW1t} at $K$. 
This means that as far as this limit is concerned, 
a tagged type-1 job passes $N$ type-2 jobs, where $N$ is a truncated geometric distribution, that is,
$P[N=k]=p(1-p)^{k}$ for $k <K$ and $P[N=K]=(1-p)^K$. Applying the final value theorem 
in the same manner as before therefore 
yields
\begin{align*} 
c_{W^{(1)}} &= c_Z  (1-p)^K \tilde S_2(-\theta_Z)^{-K} \frac{\tilde S_2(-\theta_Z)^K}{\tilde S(-\theta_Z)^{K}}
 + c_Z
\sum_{k=0}^{K-1} p(1-p)^k \tilde S_2(-\theta_Z)^{-k} \frac{\tilde S_2(-\theta_Z)^k \tilde S_1(-\theta_Z)}{\tilde S(-\theta_Z)^{k+1}}\\
&=  c_Z \left( (1-p)^K \tilde S(-\theta_Z)^{-K} + p \frac{\tilde S_1(-\theta_Z)}{\tilde S(-\theta_Z)}
\frac{1-(1-p)^K \tilde S(-\theta_Z)^{-K}}{1-(1-p) \tilde S(-\theta_Z)^{-1}}, \right),
\end{align*} 
where the fraction $c_Z \tilde S_2(-\theta_Z)^K/\tilde S(-\theta_Z)^K$ is used to
get the workload conditioned on having $K$ type-$2$ jobs in the back, while
the fractions 
 $c_Z \tilde S_2(-\theta_Z)^k \tilde S_1(-\theta_Z)/\tilde S(-\theta_Z)^{k+1}$ are needed
to get the workload conditioned on the fact that the last $k+1$ jobs are
a type-$1$ job followed by $k$ type-$2$ jobs. 

\end{proof}

\begin{theorem}\label{th:R1a}
Let $R^{(1)}$ be the response time distribution of a type-1 job,
then
\begin{align}\label{eq:PR1_th}
P[ R^{(1)} > t] &= P[ W^{(1)} > t] + (1-\lambda)\alpha_1 e^{S_1 t} \e - 
(\nu_1, 0) e^{A^{(1)}t} \colvec{0}{\e} \nonumber \\
&- (\pi_0^{(1)}-\pi_1^{(1)}R^{-1}, 0)  e^{A^{(3)}t}  \colvec{0}{\e} + 
( \pi_0^{(2)}R^{-1}, 0)  e^{A^{(2)} t}  \colvec{0}{\e},
\end{align}
where 
\begin{align*}
A^{(1)}&= \begin{bmatrix}
S^\top \otimes I + (s^* \alpha)^\top \otimes R & (S^\top \otimes I + (s^* \alpha)^\top \otimes R)\xi \alpha_1 \\ 0 & S_1 
\end{bmatrix},\\ 
A^{(2)}&=\begin{bmatrix}
S & (Se)\alpha_1 \\ 0 & S_1
\end{bmatrix}, \\
A^{(3)}&=\begin{bmatrix}
S & S\colvec{e/p}{0}\alpha_1 \\ 0 & S_1
\end{bmatrix}.
\end{align*}
Further, $-\lim_{t\rightarrow \infty} \frac{1}{t}\log P[R^{(1)} > t] = \theta_Z$
and $c_{R^{(1)}} = \lim_{t \rightarrow \infty} e^{\theta_Z t} P[R^{(1)} > t]
= c_{W^{(1)}}\tilde S_1(-\theta_Z)$.
\end{theorem}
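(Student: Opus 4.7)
The plan is to mimic the alternate derivation given in the proof of Theorem \ref{th:R2}. Because $R^{(1)}=W^{(1)}+X_1$ with $X_1$ independent of $W^{(1)}$ and $P[W^{(1)}=0]=1-\lambda$, splitting on the value of the workload gives
\begin{align*}
P[R^{(1)} > t] = (1-\lambda)\alpha_1 e^{S_1 t}\e + P[W^{(1)} > t] + \int_0^t \Big(-\frac{d}{ds}P[W^{(1)} > s]\Big)\alpha_1 e^{S_1(t-s)}\e\, ds,
\end{align*}
where the three terms cover $W^{(1)}=0$, $W^{(1)} > t$, and $0 < W^{(1)} \leq t$, respectively.

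I would then substitute the explicit formula from Theorem \ref{th:W1a},
\begin{align*}
P[W^{(1)} > t] = \nu_1 e^{B t}\xi + (\pi_0^{(1)}-\pi_1^{(1)}R^{-1}) e^{S t}\e - \pi_0^{(2)}R^{-1} e^{S t}\colvec{\e/p}{0},
\end{align*}
with $B = S^\top\otimes I + (s^*\alpha)^\top\otimes R$, differentiate term by term, and use the commutativity $Xe^{Xs}=e^{Xs}X$ to rewrite each piece of the integrand in the form $e^{A_{11}s} A_{12}\, e^{S_1(t-s)}\e$. Lemma \ref{lem:cvl} then identifies each integral with the upper-right block of $e^{A t}$ for an explicit block-triangular $A$. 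The three triples $(A_{11},A_{12})=(B,\, B\xi\alpha_1)$, $(S,\, S\e\alpha_1)$, $(S,\, S\colvec{\e/p}{0}\alpha_1)$ are exactly the building blocks of $A^{(1)}$, $A^{(2)}$, $A^{(3)}$ in the statement, and when combined with the prefactors and signs carried over from $P[W^{(1)} > t]$ they yield formula \eqref{eq:PR1_th}.

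For the asymptotic claims, the term $(1-\lambda)\alpha_1 e^{S_1 t}\e$ and the contributions associated with $A^{(2)}$ and $A^{(3)}$ all decay at rate $\theta_1$, which strictly exceeds $\theta_Z$ by Theorem \ref{th:Z}; whereas $P[W^{(1)} > t]$ and the integral involving $A^{(1)}$ decay at the slower rate $\theta_Z$ by Theorem \ref{th:W1a} and the usual Kronecker-spectrum argument. Hence $P[R^{(1)} > t]$ has decay rate $\theta_Z$. The constant $c_{R^{(1)}}$ follows exactly as in the proof of Theorem \ref{th:R2}: since $R^{(1)}=W^{(1)}+X_1$ are independent, the Laplace transforms factorize as $\tilde R^{(1)}(s)=\tilde W^{(1)}(s)\tilde S_1(s)$, and applying the final-value theorem to $e^{\theta_Z t}P[R^{(1)} > t]$ yields
\begin{align*}
c_{R^{(1)}} = \frac{1}{\theta_Z}\lim_{s\to 0} s\,\tilde W^{(1)}(s-\theta_Z)\tilde S_1(s-\theta_Z) = c_{W^{(1)}}\tilde S_1(-\theta_Z).
\end{align*}

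The main obstacle is bookkeeping rather than analysis: one has to absorb the leftmost $B$ (respectively $S$) on each differentiated term into the $A_{12}$ block before invoking Lemma \ref{lem:cvl}, verify that pointwise differentiation of $P[W^{(1)} > s]$ on $(0,\infty)$ contributes no Dirac mass at $s=0^+$ (the formula of Theorem \ref{th:W1a} is right-continuous at $0$ with value $\lambda = P[W^{(1)} > 0]$), and finally match each of the three convolutions to the correct block matrix in \eqref{eq:PR1_th} with the appropriate sign and prefactor.
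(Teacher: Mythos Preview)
Your proposal is correct and follows exactly the paper's own (very terse) proof: write $P[R^{(1)}>t]$ via the convolution identity $R^{(1)}=W^{(1)}+X_1$, differentiate the three summands of $P[W^{(1)}>t]$ from Theorem~\ref{th:W1a}, apply Lemma~\ref{lem:cvl} to each resulting integral, and obtain $c_{R^{(1)}}=c_{W^{(1)}}\tilde S_1(-\theta_Z)$ from the final-value theorem exactly as in Theorem~\ref{th:R2}. One small caveat on the bookkeeping you flag at the end: carrying out your derivation verbatim pairs the prefactor $(\pi_0^{(1)}-\pi_1^{(1)}R^{-1})$ with the block $(S\e)\alpha_1$ (i.e.\ $A^{(2)}$) and $\pi_0^{(2)}R^{-1}$ with $S\binom{\e/p}{0}\alpha_1$ (i.e.\ $A^{(3)}$), so the superscripts on $A^{(2)}$ and $A^{(3)}$ in \eqref{eq:PR1_th} appear to be interchanged in the stated formula --- this is a typo in the statement, not a defect in your argument.
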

\begin{proof}
The result follows from Theorem \ref{th:W1a} and the fact that $P[R^{(1)} > t]$ can
be expressed as
\begin{align*}
P[R^{(1)} > t] = (1-\lambda) &\alpha_1 e^{S_1 t} \e + P[W^{(1)} > t] + \int_0^t \left( -\frac{\partial}{\partial t} P[W^{(1)} > s] \right)
\alpha_1 e^{S_1 (t-s)} \e ds,
\end{align*}
combined with Lemma \ref{lem:cvl}.
\end{proof}

\section{Asymptotic tail improvement ratio}\label{sec:ATIR}

In this section we present results for the asymptotic tail improvement ratio. Most
of the results are expressed in terms of $\tilde S(-\theta_Z)$, $\tilde S_1(-\theta_Z)$,
and $\tilde S_2(-\theta_Z)$, where $\theta_Z$ is the decay rate of the workload $Z$. It is worth noting at this stage that
\begin{align}\label{eq:Stilde}
\tilde S(-\theta_Z) = \frac{\lambda+ \theta_Z}{\lambda},
\end{align}
which follows from \cite[Equation (4)]{abate1} by noting that $\lambda/(\lambda+\theta_Z)$
is the Laplace transform of the inter-arrival time evaluated in $s=\theta_Z$.

The previous theorems yield the following result for the asymptotic tail
improvement ratio:

\begin{theorem}\label{th:ATIR}
The asymptotic tail improvement ratio (ATIR) is equal to
\begin{align}\label{eq:atir}
\mbox{ATIR}(K) &= 1 - \lim_{t \rightarrow \infty} \frac{ P[R_{\mbox{Nudge-K}} > t]}{P[R > t]} \nonumber \\
&=w_1 (\tilde S_2(-\theta_Z)-1)w \frac{1-w^K}{1-w}-(1-w_1)(\tilde S_1(-\theta_Z)-1)(1-(1-p)^K)
\end{align}
with $w_1=p\tilde S_1(-\theta_Z)/\tilde S(-\theta_Z) \in (0,1)$, $w=(1-p)/\tilde S(-\theta_Z)\in (0,1)$ and $w_1+w \in (0,1)$.

Further, the integer $K$ that maximizes ATIR($K$) is given by
\begin{align}\label{eq:Kopt}
K_{opt}=\left\lfloor 
\log \left(\frac{\tilde S_1(-\theta_Z)(\tilde S_2(-\theta_Z)-1)}{\tilde S_2(-\theta_Z)(\tilde S_1(-\theta_Z)-1)}\right)\middle/ \log \tilde S(-\theta_Z) 
\right\rfloor,
\end{align}
if $K_{opt} \geq 0$, otherwise setting $K=0$ is optimal.
\end{theorem}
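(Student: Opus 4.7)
The plan is to compute $c_{R_{\mbox{Nudge-K}}}/c_{FCFS}$ directly from the results already established in the paper. Since a tagged job is of type $1$ with probability $p$ and type $2$ with probability $1-p$,
\[ c_{R_{\mbox{Nudge-K}}} = p\,c_{R^{(1)}} + (1-p)\,c_{R^{(2)}} = p\,\tilde S_1(-\theta_Z)\,c_{W^{(1)}} + (1-p)\,\tilde S_2(-\theta_Z)\,c_{W^{(2)}},\]
using the identities $c_{R^{(i)}} = \tilde S_i(-\theta_Z) c_{W^{(i)}}$ supplied by Theorems \ref{th:R1a} and \ref{th:R2}. Dividing by $c_{FCFS} = c_Z \tilde S(-\theta_Z)$ from Theorem \ref{th:FCFS} and substituting the explicit formulas \eqref{eq:cW1} and \eqref{eq:cW2} yields an expression for $1-\mbox{ATIR}(K)$ in terms of $s:=\tilde S(-\theta_Z)$, $s_1:=\tilde S_1(-\theta_Z)$, $s_2:=\tilde S_2(-\theta_Z)$, $p$ and $K$.

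The next step is an algebraic simplification aimed at the compact form \eqref{eq:atir}. The crucial identity is $s = p s_1 + (1-p)s_2$, which immediately gives $(1-p) s_2/s = 1-w_1$ for $w_1=ps_1/s$. Setting $w=(1-p)/s$ so that $w^K=(1-p)^K s^{-K}$, the geometric factor appearing in $c_{W^{(1)}}$ becomes exactly $(1-w^K)/(1-w)$. After expanding the four resulting contributions, the $s_1(1-(1-p)^K)$ terms cancel outright, and the remaining $w^K$ and constant terms collapse using the one-line identity $(-w_1+w(1-s_2))/(1-w)=-1$, which is a direct consequence of $1-w=(s-(1-p))/s$. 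What survives is precisely the two-term expression in \eqref{eq:atir}. The bounds $w_1,w\in(0,1)$ follow from $s>1$ (since $\theta_Z>0$ gives $\tilde S(-\theta_Z)>\tilde S(0)=1$), while $w_1+w<1$ follows from $s_2>1$ using the same convex decomposition of $s$.

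For the optimal $K$ the strategy is to study the sign of the forward difference. Telescoping with $(1-w^{K+1})/(1-w)-(1-w^K)/(1-w)=w^K$ and $(1-p)^K-(1-p)^{K+1}=p(1-p)^K$ gives
\[ \mbox{ATIR}(K+1)-\mbox{ATIR}(K) = w_1(s_2-1)\,w^{K+1} - (1-w_1)(s_1-1)\,p(1-p)^K. \]
Factoring out $(1-p)^K$ and using $w_1/(1-w_1) = p s_1/((1-p)s_2)$ converts positivity of this increment into the scalar condition $s^{K+1} < s_1(s_2-1)/(s_2(s_1-1))$. Since $s>1$, the left-hand side is strictly increasing in $K$, so the ATIR sequence is unimodal and the largest integer $K\geq 0$ satisfying this inequality is exactly the floor in \eqref{eq:Kopt}; when the right-hand side is $\leq 1$ the increment is already non-positive at $K=0$, forcing $K_{opt}=0$.

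The main obstacle is the bookkeeping in the cancellation of step two: the raw substitution produces roughly six terms involving $w^K$, $(1-p)^K$ and constants, and extracting the elegant two-term form requires the precise identity $1-w=(s-(1-p))/s$ together with the convexity relation $s=ps_1+(1-p)s_2$. Everything else reduces to routine rearrangement of Laplace-transform quantities and a monotonicity argument on $s^{K+1}$.
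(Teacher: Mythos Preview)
Your proposal is correct and follows essentially the same route as the paper: combine $c_{R^{(i)}}=\tilde S_i(-\theta_Z)c_{W^{(i)}}$ with \eqref{eq:cW1}, \eqref{eq:cW2} and $c_{FCFS}=c_Z\tilde S(-\theta_Z)$, simplify via $w,w_1$ and the identity $s=ps_1+(1-p)s_2$, and locate $K_{opt}$ through the sign of the forward difference $\Delta_{\mbox{ATIR}}(K)$. One small slip in the last step: the largest integer $K\ge 0$ with $\Delta_{\mbox{ATIR}}(K)>0$ is $K_{opt}-1$, not $K_{opt}$ (positivity of $\Delta(K)$ means $\mbox{ATIR}(K+1)>\mbox{ATIR}(K)$), so it is $K_{opt}=$``largest such $K$''$+1$ that equals the floor in \eqref{eq:Kopt}.
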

\begin{proof}
By definition 
\begin{align*}
\mbox{ATIR}(K)&=1 - \frac{p c_{R^{(1)}}+(1-p)c_{R^{(2)}}}{c_{FCFS}}=1 - \frac{p c_{W^{(1)}}\tilde S_1(-\theta_Z)+(1-p)c_{W^{(2)}}\tilde S_2(-\theta_Z)}{c_Z
\tilde S(-\theta_Z)}.
\end{align*}
Combined with
\eqref{eq:cW2} and \eqref{eq:cW1}, this
yields
\begin{align*}
\mbox{ATIR}(K)&=1 -  p \left( w^K
+ w_1 
\frac{1-w^K}{1-w}\right) \frac{\tilde S_1(-\theta_Z)}{\tilde S(-\theta_Z)} \\
&\ \ \ - 
(1-p) 
\left((1-p)^K + (1-(1-p)^K) \tilde S_1(-\theta_Z)\right) \frac{\tilde S_2(-\theta_Z)}{\tilde S(-\theta_Z)},\\
&=(1 - w_1) + w_1 (1-  w^K)
- w_1^2 
\frac{1-w^K}{1-w} \\
&\ \ \ - 
(1-w_1) 
\left((1-p)^K + (1-(1-p)^K) \tilde S_1(-\theta_Z)\right),
\end{align*}
as $1-w_1 = (1-p)\tilde S_2(-\theta_Z)/\tilde S(-\theta_Z)$.
\eqref{eq:atir} now follows by verifying that $1-w-w_1 = w(\tilde S_2(-\theta_Z)-1)$.
To see that $w \in (0,1)$, it suffices
to note that 
$\tilde S(-\theta_Z) > 1$ as $\theta_Z >0$. 
Similarly $\tilde S_i(-\theta_Z) > 1$, which implies that
$w_1 \in (0,1)$ as $\tilde S(-\theta_Z)=p \tilde S_1(-\theta_Z)+(1-p)\tilde S_2(-\theta_Z)$.
Further, $w+w_1 = ((1-p)+p \tilde S_1(-\theta_Z))/\tilde S(-\theta_Z)) < 1$.

Setting the derivative of the ATIR($K$) equal to zero
implies 
\[ \frac{(1-p)^K}{w^K} = \frac{w_1}{1-w_1}\frac{w}{1-w}\frac{\log w}{\log (1-p)}
\frac{\tilde S_2(-\theta_Z)-1}{\tilde S_1(-\theta_Z)-1},\]
which shows that the ATIR($K$) has a unique stationary point.

Define $\Delta_{\mbox{ATIR}}(K)  = \mbox{ATIR}(K+1)-\mbox{ATIR}(K)$.
Recalling that $1-w_1 = (1-p)\tilde S_2(-\theta_Z)/\tilde S(-\theta_Z)$, we have
\begin{align}\label{eq:DeltaATIR}
\Delta_{\mbox{ATIR}}(K)  
&=  w_1 (\tilde S_2(-\theta_Z)-1) w^{K+1} - p(1-w_1)(\tilde S_1(-\theta_Z)-1)
(1-p)^K,\nonumber\\
&= \left( 
\frac{\tilde S_1(-\theta_Z)(\tilde S_2(-\theta_Z)-1)}{\tilde S(-\theta_Z)^{K+2}}
- \frac{\tilde S_2(-\theta_Z)(\tilde S_1(-\theta_Z)-1)}{\tilde S(-\theta_Z)} \right)
p(1-p)^{K+1}.
\end{align}
As $\lim_{K \rightarrow -\infty} \Delta_{\mbox{ATIR}}(K) = +\infty$ and
$\Delta_{\mbox{ATIR}}(K) < 0$ for $K$ sufficiently large, the unique stationary
point of $\mbox{ATIR}(K)$ is a maximum and the optimal integer $K$ is located in the ceil of the unique root of $\Delta_{\mbox{ATIR}}(K)$,
which yields \eqref{eq:Kopt}.
\end{proof}
\paragraph{Remarks:}
The expression in \eqref{eq:DeltaATIR}
shows that the increase in the ATIR($K$) decreases with $K$ as long as 
it remains positive. Thus the gain obtained by increasing $K$ by one decreases with $K$
until the optimal $K$ is reached. 

It is worth noting that if the type-$i$ jobs have an exponential job size distribution with
parameter $\mu_i$, then \eqref{eq:Kopt} simplifies to
\begin{align}\label{eq:Kopt_expo}
K_{opt} = \lfloor \log(\mu_1 / \mu_2) / \log(\tilde S(-\theta_Z)) \rfloor,
\end{align}
which implies that $K_{opt}$ is non-decreasing in $\lambda$ when the job sizes are exponential 
and $\mu_1/\mu_2 = E[X_2]/E[X_1]>1$ (as 
$\tilde S(-\theta_Z)$ decreases in $\lambda$). As we will demonstrate in Section
\ref{sec:num}, this property does not necessarily hold when the type-$2$ jobs are no longer
exponential even if $X_2$ stochastically dominates $X_1$.

\begin{theorem}\label{th:ATIRcond}
The ATIR$(K)> 0$ if and only if
\begin{align}\label{eq:K}
\left. \left(1 - \frac{1}{\tilde S_2(-\theta_Z)}\right)
 \middle/ \left(1 - \frac{1}{\tilde S_1(-\theta_Z)}\right) \right. 
 > \left(1+\frac{\theta_Z}{\lambda p}\right)
\frac{1-(1-p)^K}{1-\left(\frac{\lambda(1-p)}{\lambda+\theta_Z}\right)^K},
\end{align}
meaning the ATIR$(1)> 0$ if and only if
\begin{align}\label{eq:K1}
\left. \left(1 - \frac{1}{\tilde S_2(-\theta_Z)}\right)
\middle/ \left(1 - \frac{1}{\tilde S_1(-\theta_Z)}\right )\right. 
> 1+\frac{\theta_Z}{\lambda},
\end{align}
and the \mbox{ATIR}$(K) > 0$ for any $K$ if and only if
\begin{align}\label{eq:allK}
\left. \left(1 - \frac{1}{\tilde S_2(-\theta_Z)}\right)
\middle/ \left(1 - \frac{1}{\tilde S_1(-\theta_Z)}\right )\right.
> 1+\frac{\theta_Z}{\lambda p}.
\end{align}
\end{theorem}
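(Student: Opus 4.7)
The plan is to start from the closed-form expression for $\text{ATIR}(K)$ supplied by Theorem~\ref{th:ATIR},
\[
\text{ATIR}(K) = w_1 (\tilde S_2(-\theta_Z)-1) w \frac{1-w^K}{1-w} - (1-w_1)(\tilde S_1(-\theta_Z)-1)(1-(1-p)^K),
\]
and to massage $\text{ATIR}(K) > 0$ into the form \eqref{eq:K}. For $K \geq 1$ both $1-w$ and $1-(1-p)^K$ are strictly positive, so I would move the subtracted term to the right and divide, obtaining
\[
\text{ATIR}(K) > 0 \ \Longleftrightarrow \ \frac{w_1 \, w \, (\tilde S_2(-\theta_Z)-1)}{(1-w_1)(1-w)(\tilde S_1(-\theta_Z)-1)} \cdot \frac{1-w^K}{1-(1-p)^K} > 1.
\]
Expanding $w_1 = p\tilde S_1(-\theta_Z)/\tilde S(-\theta_Z)$, $1-w_1 = (1-p)\tilde S_2(-\theta_Z)/\tilde S(-\theta_Z)$ and $w = (1-p)/\tilde S(-\theta_Z)$ collapses the leading ratio to $p\tilde S_1(-\theta_Z)(\tilde S_2(-\theta_Z)-1)/[\tilde S_2(-\theta_Z)\tilde S(-\theta_Z)(1-w)(\tilde S_1(-\theta_Z)-1)]$.

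The pivotal identity is $\tilde S(-\theta_Z)(1-w) = \tilde S(-\theta_Z) - (1-p)$, which via \eqref{eq:Stilde} equals $(\lambda p + \theta_Z)/\lambda$, so $\tilde S(-\theta_Z)(1-w)/p = 1 + \theta_Z/(\lambda p)$. Combined with $(\tilde S_i(-\theta_Z)-1)/\tilde S_i(-\theta_Z) = 1 - 1/\tilde S_i(-\theta_Z)$ for $i=1,2$, substitution yields \eqref{eq:K} after rearrangement. For \eqref{eq:K1} I would simply specialize to $K=1$: the factor $p/(1-w)$ becomes $p(\lambda+\theta_Z)/(\lambda p+\theta_Z)$, and multiplication by $1+\theta_Z/(\lambda p) = (\lambda p+\theta_Z)/(\lambda p)$ produces $(\lambda+\theta_Z)/\lambda = 1 + \theta_Z/\lambda$, the threshold in \eqref{eq:K1}.

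For \eqref{eq:allK}, define $f(K) := (1-(1-p)^K)/(1-w^K)$; the plan is to show $f$ is strictly increasing on $K \geq 1$ with $\lim_{K\to\infty} f(K) = 1$. Writing $q = 1-p$, the inequality $f(K+1) > f(K)$ reduces, after clearing the positive denominators and cancelling, to
\[
\sum_{i=0}^{K-1} q^i w^i (q^{K-i} - w^{K-i}) > 0,
\]
which holds term-by-term because $w = \lambda(1-p)/(\lambda+\theta_Z) < 1-p = q$ (a direct consequence of $\theta_Z > 0$ via \eqref{eq:Stilde}). Since the Nudge-$K$ algorithm admits $K=\infty$ (as stated in Section~\ref{sec:system}) and $f(\infty) = 1$, requiring $\text{ATIR}(K) > 0$ for \emph{every} $K$ forces the left-hand side of \eqref{eq:K} to strictly exceed the $K$-supremum $1+\theta_Z/(\lambda p)$, giving \eqref{eq:allK}; conversely \eqref{eq:allK} returns \eqref{eq:K} for every finite $K$ via the monotonicity of $f$.

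The only step that rises above routine bookkeeping is the monotonicity of $f$, and even there the strict inequality $w < 1-p$ delivered by \eqref{eq:Stilde} makes the pointwise comparison trivial. Everything else is systematic substitution into the formula from Theorem~\ref{th:ATIR}.
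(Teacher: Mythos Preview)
Your argument is correct and follows the same route as the paper: rewrite the sign of $\mathrm{ATIR}(K)$ from the formula in Theorem~\ref{th:ATIR}, substitute $w_1$, $1-w_1$, $w$, and invoke \eqref{eq:Stilde} to produce the factor $1+\theta_Z/(\lambda p)$; then specialise to $K=1$ and pass to $K\to\infty$. The only difference is that the paper simply says ``taking the limit for $K$ to infinity'' for \eqref{eq:allK}, whereas you supply the missing justification by proving that $f(K)=(1-(1-p)^K)/(1-w^K)$ is strictly increasing with limit~$1$; your telescoping identity (which, after factoring out $(1-q)(1-w)>0$, is exactly the cross-difference) is valid, and the simpler observation $f(K)<1$ because $w<q$ would also have sufficed for the ``if'' direction.
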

\begin{proof}
From \eqref{eq:atir} and the definition of $w_1$, we have ATIR($K)>0$ if and only if
\begin{align*}
p \tilde S_1(-\theta_Z) &(\tilde S_2(-\theta_Z)-1) w \frac{1-w^K}{1-w}
> (1-p)\tilde S_2(-\theta_Z) (\tilde S_1(-\theta_Z)-1) (1-(1-p)^K).
\end{align*}
This condition can be restated as
\begin{align*}
\left. \left(1 - \frac{1}{\tilde S_2(-\theta_Z)}\right)
\middle/ \left(1 - \frac{1}{\tilde S_1(-\theta_Z)}\right )\right.
> \frac{(1-w)\tilde S(-\theta_Z)}{p} \frac{1-(1-p)^K}{1-w^K}.
\end{align*}
Using $w=(1-p)/\tilde S(-\theta_Z)$ and \eqref{eq:Stilde} we obtain \eqref{eq:K}.
Setting $K=1$ and taking the limit for $K$ to infinity yield \eqref{eq:K1} and
\eqref{eq:allK}.
The result for  $K=1$ is also immediate from \eqref{eq:DeltaATIR} as
$\mbox{ATIR}(1) = \Delta_{\mbox{ATIR}}(0)$.


\end{proof}

\paragraph{Remarks:} The condition in \eqref{eq:K1} is very similar to Theorem 4.3 in \cite{nudge}. Moreover when
$K=1$, we have
\begin{align*}
\mbox{ATIR}(1) &= p(1-p)\left(\frac{\tilde S_1(-\theta_Z)}{\tilde S(-\theta_Z)}
\frac{\tilde S_2(-\theta_Z)-1}{\tilde S(-\theta_Z)}
-\frac{\tilde S_2(-\theta_Z)}{\tilde S(-\theta_Z)}(\tilde S_1(-\theta_Z)-1) \right)\\
&=\frac{p(1-p)}{\tilde S(-\theta_Z)}
\left(\tilde S_2(-\theta_Z)-\frac{\tilde S_1(-\theta_Z)}{\tilde S(-\theta_Z)}
-\tilde S_1(-\theta_Z)\tilde S_2(-\theta_Z)(1-\tilde S(-\theta_Z)^{-1}) \right)\\
&=\frac{\lambda p(1-p)}{\lambda+\theta_Z}
\left(\tilde S_2(-\theta_Z)-\frac{\lambda}{\lambda+\theta_Z}\tilde S_1(-\theta_Z)-
\frac{\theta_Z}{\lambda+\theta_Z}\tilde S_1(-\theta_Z)\tilde S_2(-\theta_Z)\right),
\end{align*}
which is again similar in form to Theorem 4.3 in \cite{nudge}.

When type-$i$ jobs have an exponential distribution with mean $1/\mu_i$, for $i=1,2$,
we have $\tilde S_i(s)=\mu_i/(\mu_i+s)$ and \eqref{eq:K1} simplifies to
\[\frac{\mu_1}{\mu_2} > \tilde S(-\theta_Z) = 1 + \theta_Z/\lambda.\]
As $\lim_{\lambda \rightarrow 1^-} \tilde S(-\theta_Z) = 1$, this means that
for $\mu_1/\mu_2 =E[X_2]/E[X_1]>1$, there exists a $\bar \lambda \in (0,1)$ such that
ATIR$(1)>0$ for $\lambda > \bar \lambda$. The next theorem shows that
this result holds in general for phase-type distributions:

\begin{theorem}\label{th:KoptHT}
When $E[X_2]>E[X_1]$, then ATIR$(K)>0$ for $\lambda$ sufficiently close to one
for any $K$. Further, if $E[X_2]>E[X_1]$, then $\lim_{\lambda \rightarrow 1^-} K_{opt}=
\infty$.
\end{theorem}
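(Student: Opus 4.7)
My plan is to combine the sharp condition \eqref{eq:allK} in Theorem \ref{th:ATIRcond} with the limiting behaviour of the decay rate $\theta_Z$ in heavy traffic. The first step is to argue that $\theta_Z \to 0$ as $\lambda \to 1^-$. For this I would use the identity \eqref{eq:Stilde}, written as $\lambda(\tilde S(-\theta_Z)-1) = \theta_Z$. Because $\tilde S$ is analytic at $0$ with $\tilde S(0)=1$ and $\tilde S'(0)=-E[X]=-1$, we have the Taylor expansion
\begin{align*}
\tilde S(-\theta_Z) - 1 = \theta_Z + \tfrac{1}{2}E[X^2]\theta_Z^2 + O(\theta_Z^3),
\end{align*}
and strict convexity of $\tilde S$ on $(s^*,0)$ shows that the Lundberg-type equation $\lambda(\tilde S(-\theta_Z)-1)=\theta_Z$ has a unique positive solution $\theta_Z$ for each $\lambda \in (0,1)$, with $\theta_Z \to 0$ continuously as $\lambda \to 1^-$. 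The same Taylor expansion applied to $\tilde S_i$ gives $\tilde S_i(-\theta_Z)-1 = E[X_i]\theta_Z+O(\theta_Z^2)$ and $\tilde S_i(-\theta_Z)\to 1$.

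For the first claim I would then evaluate the two sides of \eqref{eq:allK} in the limit $\lambda\to 1^-$. Writing $1-1/\tilde S_i(-\theta_Z) = (\tilde S_i(-\theta_Z)-1)/\tilde S_i(-\theta_Z)$ and using the expansions above, the left-hand side of \eqref{eq:allK} converges to $E[X_2]/E[X_1] > 1$, while the right-hand side $1+\theta_Z/(\lambda p)$ converges to $1$. Hence \eqref{eq:allK} holds for all $\lambda$ sufficiently close to $1$, and Theorem \ref{th:ATIRcond} then yields ATIR$(K) > 0$ for every $K \geq 1$.

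For the second claim I would invoke the closed form \eqref{eq:Kopt} directly. The argument of the logarithm in the numerator is
\begin{align*}
\frac{\tilde S_1(-\theta_Z)(\tilde S_2(-\theta_Z)-1)}{\tilde S_2(-\theta_Z)(\tilde S_1(-\theta_Z)-1)} \to \frac{E[X_2]}{E[X_1]} > 1
\end{align*}
as $\theta_Z \to 0$, so the numerator of \eqref{eq:Kopt} tends to the strictly positive constant $\log(E[X_2]/E[X_1])$. The denominator $\log \tilde S(-\theta_Z) = \log(1+\theta_Z/\lambda)$ tends to $0^+$. Therefore the ratio diverges to $+\infty$, and so does $K_{opt}$.

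The only delicate step is the first one: justifying that $\theta_Z$ is a well-defined, continuous, strictly positive function of $\lambda$ on $(0,1)$ that vanishes as $\lambda\to 1^-$. Once this is in hand, both claims reduce to elementary Taylor expansions of the Laplace transforms around $0$, and the rest is bookkeeping.
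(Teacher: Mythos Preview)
Your proposal is correct and follows essentially the same route as the paper: Taylor-expand $\tilde S_i(-\theta_Z)$ around $\theta_Z=0$, use that $\theta_Z\to 0$ as $\lambda\to 1^-$ to reduce the left side of \eqref{eq:allK} to $E[X_2]/E[X_1]$ and the right side to $1$, and then handle $K_{opt}$ via \eqref{eq:Kopt} by noting the numerator tends to $\log(E[X_2]/E[X_1])>0$ while the denominator $\log\tilde S(-\theta_Z)\to 0^+$. The only difference is that you spell out the Lundberg-equation argument for $\theta_Z\to 0$, which the paper simply asserts.
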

\begin{proof}
Using the Taylor series expansion of the exponential function, we readily see that
\begin{align}\label{eq:Sexpan}
\tilde S_i(-\theta_Z) = \sum_{k=0}^\infty \frac{\theta_Z^k E[X_i^k]}{k!}= 1 + \theta_Z E[X_i] + o(\theta_Z^2),
\end{align}
for $i=1,2$ and similarly $\tilde S(-\theta_Z) = 1 + \theta_Z E[X] + o(\theta_Z^2)$.
This implies that
\begin{align}\label{eq:limratio}
\lim_{\lambda \rightarrow 1^-}
\left. \left( 1-\frac{1}{\tilde S_2(-\theta_Z)}\right) \middle/ \left(
1-\frac{1}{\tilde S_1(-\theta_Z)}\right)\right.=
\frac{E[X_2]}{E[X_1]}.
\end{align}
as $\theta_Z$ tends to zero when $\lambda$ tends to one. 
Hence, if $E[X_2]/E[X]>1$, then ATIR$(K)>0$ for any $K$ for $\lambda$ close enough to $1$ due to \eqref{eq:allK} as $1+\theta_Z/\lambda$ tends to one.

To determine $\lim_{\lambda \rightarrow 1^-} K_{opt}$ we note that \eqref{eq:limratio}
implies that
\begin{align}\label{eq:limlam1}
\lim_{\lambda \rightarrow 1^-} K_{opt}= 
\lim_{\lambda \rightarrow 1^-} \left\lfloor  \frac{\log(E[X_2]/E[X_1])}{\log(\tilde S(-\theta_Z))}
\right\rfloor.
\end{align}
This proves that $K_{opt}$ tends to $\infty$ when $E[X_2]/E[X_1] > 1$, while
$K_{opt}$ tends to $-\infty$ if $E[X_2]/E[X_1] < 1$.
\end{proof}

\begin{theorem}\label{th:Kapprox}
For $\lambda$ close to one and $E[X_2]>E[X_1]$, we have
\begin{align}\label{eq:Kapprox}
K_{opt} \approx \left\lfloor 
\frac{\log (E[X_2]/E[X_1]) E[X^2]}{2(1-\lambda)} 
\right\rfloor \approx \left\lfloor 
\log \left(\frac{E[X_2]}{E[X_1]}\right) E[Z]
\right\rfloor,
\end{align}
where $\log()$ is the natural logarithm.
\end{theorem}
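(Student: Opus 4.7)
The plan is to bootstrap from equation \eqref{eq:limlam1} established in the proof of Theorem~\ref{th:KoptHT}, which gives
\[K_{opt} = \left\lfloor \frac{\log(E[X_2]/E[X_1]) + o(1)}{\log(\tilde S(-\theta_Z))} \right\rfloor\]
as $\lambda \to 1^-$ (the $o(1)$ comes from the fact that by \eqref{eq:limratio} the numerator inside the floor converges to $\log(E[X_2]/E[X_1])$). Consequently the task reduces entirely to extracting the leading-order behavior of $\log(\tilde S(-\theta_Z))$ in the heavy-traffic limit.

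First I would combine the identity \eqref{eq:Stilde}, namely $\tilde S(-\theta_Z) = 1 + \theta_Z/\lambda$, with the second-order refinement of the Taylor expansion \eqref{eq:Sexpan},
\[\tilde S(-\theta_Z) = 1 + \theta_Z E[X] + \frac{\theta_Z^2 E[X^2]}{2} + o(\theta_Z^2).\]
Since $E[X]=1$ by normalization, equating the two expressions and dividing through by $\theta_Z$ yields $(1-\lambda)/\lambda = \theta_Z E[X^2]/2 + o(\theta_Z)$, which together with $\theta_Z \to 0$ as $\lambda \to 1^-$ (from Theorem~\ref{th:Z}) gives
\[\theta_Z \sim \frac{2(1-\lambda)}{E[X^2]}.\]
Using $\log(1+x) \sim x$ for $x\to 0$ and $\lambda \to 1$, this in turn yields $\log(\tilde S(-\theta_Z)) = \log(1+\theta_Z/\lambda) \sim 2(1-\lambda)/E[X^2]$, and substitution into the expression for $K_{opt}$ produces the first approximation in \eqref{eq:Kapprox}.

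The second approximation in \eqref{eq:Kapprox} is then just a rewriting via the Pollaczek--Khinchin formula: since $E[X]=1$, we have $E[Z] = \lambda E[X^2]/(2(1-\lambda))$, and therefore $E[X^2]/(2(1-\lambda)) \sim E[Z]$ as $\lambda \to 1^-$, which finishes the argument.

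I do not anticipate a real obstacle here: the argument is a routine second-order asymptotic expansion wrapped around the exact identity \eqref{eq:Stilde} and the standard P--K formula for $E[Z]$. The only subtlety worth flagging is the interpretation of ``$\approx$'' in \eqref{eq:Kapprox}, which should be read as asymptotic equivalence of the arguments of the floor functions as $\lambda \to 1^-$; since these arguments diverge to infinity while $\log(E[X_2]/E[X_1])>0$ is fixed and positive, the $O(1)$ additive error inherited from the $o(1)$ term in the numerator does not affect the leading-order equivalence.
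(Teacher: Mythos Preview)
Your argument is correct and reaches the same conclusion as the paper, but the route to the heavy-traffic asymptotic of $\theta_Z$ differs. The paper first replaces $\log(\tilde S(-\theta_Z))$ by $\log(1+\theta_Z)$ via the first-order Taylor expansion $\tilde S(-\theta_Z)=1+\theta_Z E[X]+o(\theta_Z^2)$ (with $E[X]=1$), and then invokes Kingman's classic GI/G/1 heavy-traffic limit \cite{kingman62} as an external result to obtain $\theta_Z \sim 2(1-\lambda)/E[X^2]$. You instead equate the exact M/G/1 identity \eqref{eq:Stilde}, $\tilde S(-\theta_Z)=1+\theta_Z/\lambda$, with the \emph{second}-order Taylor expansion of $\tilde S(-\theta_Z)$ and solve for $\theta_Z$ directly. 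Your approach is more self-contained since it stays entirely within the identities already established in the paper and does not appeal to an outside heavy-traffic theorem; the paper's approach has the virtue of making explicit the connection to the well-known Kingman limit. Both arrive at the same leading-order $\theta_Z$ and hence the same approximation for $K_{opt}$, and both close with the Pollaczek--Khinchin identity $E[Z]=\lambda E[X^2]/(2(1-\lambda))$ for the second form.
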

\begin{proof}
Using \eqref{eq:limlam1} and the fact that
$\tilde S(-\theta_Z) = 1 + \theta_Z + o(\theta_Z^2)$ (as $E[X]=1$), we
have for $\lambda$ close to one
\[K_{opt} \approx \left\lfloor  \frac{\log(E[X_2]/E[X_1])}{\log(1+\theta_Z)} \right\rfloor.\]
The result therefore
follows from the classic heavy-traffic limit of Kingman \cite{kingman62} for the GI/G/1 queue (when
both the inter-arrival time $I$ and service time distribution $X$ has  finite variance), which
states that the decay rate $\theta_Z$ of the waiting time in the heavy traffic limit equals
\begin{align}\label{eq:thetaHT}
\frac{2 \left( E[I]-E[X]\right)}{Var[I]+Var[X]} =\frac{2(\frac{1}{\lambda}-1)}{\frac{1}{\lambda^2} + 
pE[X_1^2]+(1-p)E[X_2^2] -1},
\end{align}
and the Taylor series expansion of $\log(1+x) = \sum_{n=1}^\infty (-1)^{n+1}x^n/n$.
The expression using $E[Z]$ is due to the fact that 
$E[Z] = \lambda E[X^2]/(2(1-\lambda))$.
\end{proof}

\begin{figure*}[t!]
  \centering
  \includegraphics[width=0.55\linewidth]{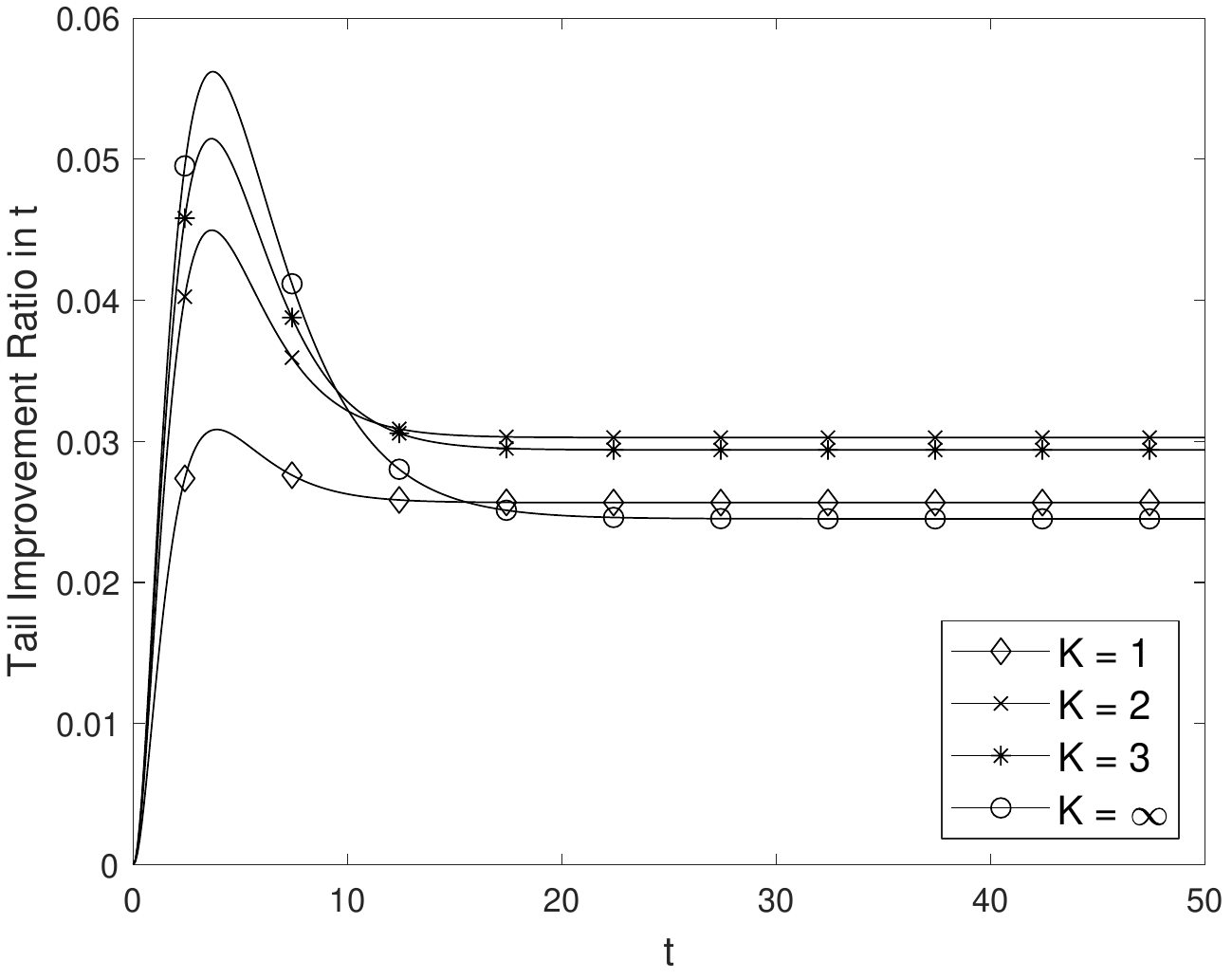}
\caption{The tail improvement ratio of Nudge-K over FCFS for expo jobs with
 $E[X_2]/E[X_1] = 2, \lambda = 3/4$ and $p=1/2$.}
\label{fig:TIR1}
\end{figure*}

\section{Numerical Results and Insights}\label{sec:num}

In this section we present various new insights on stochastic and asymptotic improvements
of FCFS. We start with the stochastic improvements.

\subsection{On stochastic improvements}
Figure \ref{fig:TIR1} plots the tail improvement ratio of Nudge-K over FCFS 
for $\lambda = 3/4$ when both type-$1$ and type-$2$ jobs have an exponential distribution with
 $E[X_2]/E[X_1] = 2$ and $p=1/2$. A number of observations can be
 made:
 \begin{enumerate}
 \item Nudge-$K$ stochastically improves FCFS for $K=1,2,3$ and $\infty$, even though
 type-$1$ jobs are not necessarily smaller than type-$2$ jobs. This illustrates that
 FCFS can be stochastically improved upon under far weaker conditions than in \cite{nudge}.
 \item Nudge-$2$ and Nudge-$3$ both stochastically improve Nudge-$1$, but neither 
 stochastically improves the other. From \eqref{eq:Kopt} we also know that
 setting $K=2$ optimizes the ATIR$(K)$. This implies that there does not exist a
 $K$ that minimizes $P[R_{\mbox{Nudge}-K} > t]$ for all $t$.
 \item Setting $K=\infty$ is best for reducing $P[R_{\mbox{Nudge-}K} > t]$ for small
 $t$, but does not stochastically improve Nudge-$K$ for $K \in \{1,2,3\}$.  
 \end{enumerate}

Figure \ref{fig:TIR2} plots the tail improvement ratio of Nudge-K over FCFS 
for the same setting as Figure \ref{fig:TIR1}, except that $E[X_2]/E[X_1]=3/2$.
The main observation in this plot is that while $K=1$ and $2$ results in a 
stochastic improvement over FCFS, setting $K\geq 3$ does not
(as the ATIR($K$) decreases in $K$ beyond $K_{opt}$).
In other words, {\it in some cases the stochastic improvement over FCFS 
can be lost if we allow that type-$1$ jobs can pass too many
type-$2$ jobs}.

Figure \ref{fig:TIR3} considers the scenario with
$p=0.7$, $\lambda = 0.7$, $E[X_2]/E[X_1]=1.2$. Type-$1$ jobs are exponential, while
the type-$2$ jobs follow an order-$2$ hyper-exponential distribution 
with $SCV=2$ and shape parameter $f=9/10$.  This means that type-$2$ jobs are
a mixture of two classes of exponential jobs: one with mean
$1/\mu_{21}\approx 1.034$ and one with mean $1/\mu_{22} \approx 7.674$, where $10\%$ of the workload is offered by the jobs belonging to the class with the larger mean
(that is, $\alpha_2 \approx(0.985,0.015)$). 
It is important to note that both $1/\mu_{21}$ and $1/\mu_{22}$ are larger
than $1$, while the mean of the type-$1$ jobs is $100/106 < 1$. As a result 
$X_2$ stochastically dominates $X_1$ (in the first order), that is
\[P[X_2 > t] > P[X_1 > t],\]
for any $t>0$. The plot shows that while all the considered $K$ values 
result in an {\it asymptotic} tail improvement ratio, none of them stochastically
improves FCFS. This example shows that {\it having an asymptotic improvement does
not imply a stochastic improvement in general even if $X_2$ stochastically
dominates $X_1$}. The intuition is that while it is good to swap type-$1$ jobs
with the type-$2$ jobs with mean $1/\mu_{22}$, the swaps with the jobs with
mean $1/\mu_{21}$ are not beneficial as their mean is fairly close to the mean of the
type-$1$ jobs.

 \begin{figure*}[t!]
  \centering
  \includegraphics[width=0.55\linewidth]{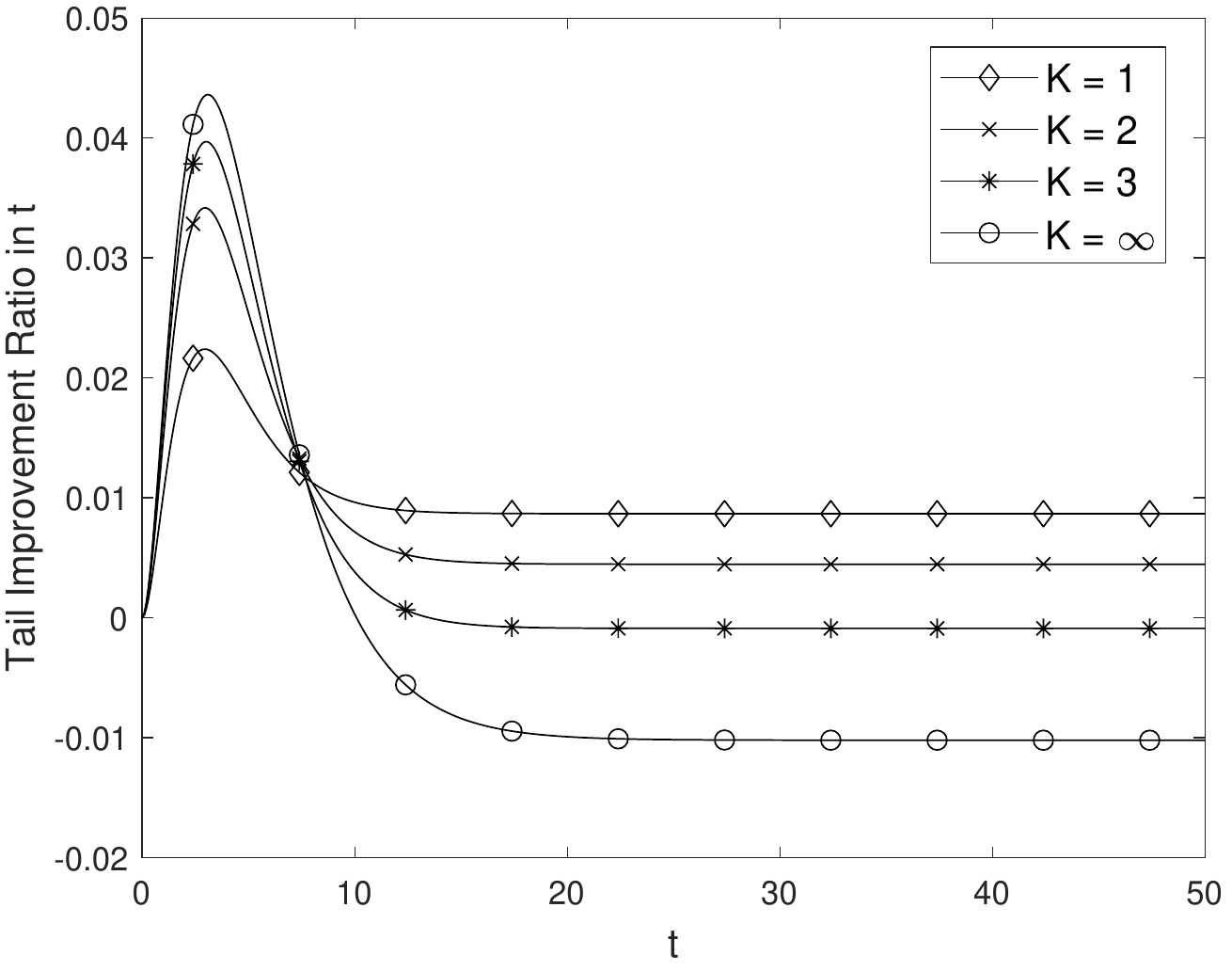}
\caption{The tail improvement ratio of Nudge-K over FCFS for expo jobs with
 $E[X_2]/E[X_1] = 3/2, \lambda = 3/4$ and $p=1/2$.}
\label{fig:TIR2}
\end{figure*}

 \begin{figure*}[t!]
  \centering
  \includegraphics[width=0.55\linewidth]{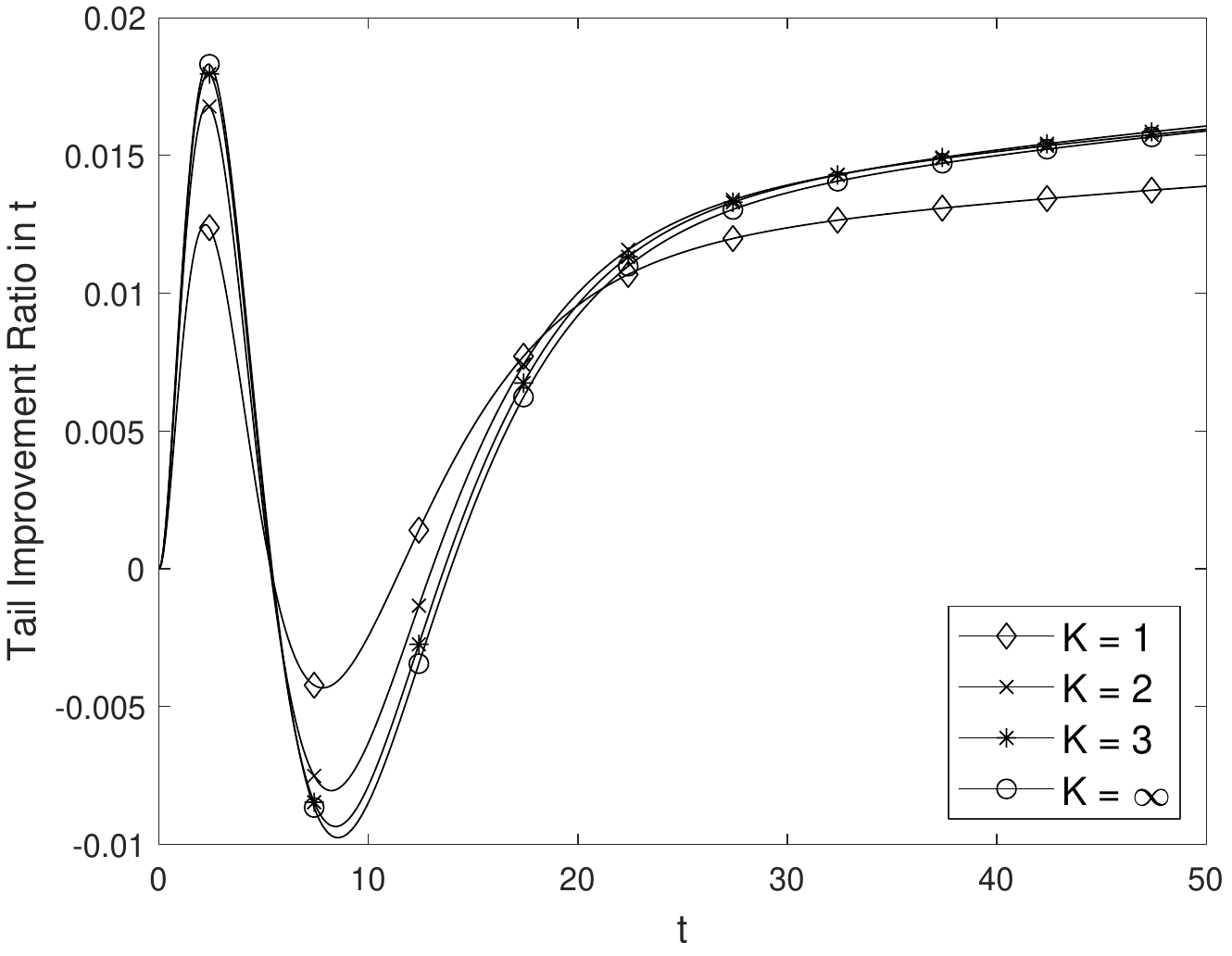}
\caption{The tail improvement ratio of Nudge-K over FCFS for expo type-$1$ and
H2 ($SCV=2, f=9/10$) type-2 jobs with
 $E[X_2]/E[X_1] = 1.2, \lambda = 0.7$ and $p=0.7$.}
\label{fig:TIR3}
\end{figure*}

%

\subsection{On asymptotic tail improvements}

In this subsection we address two issues. First, we noted that if both the
type-$1$ and type-$2$ jobs are exponential, then $K_{opt}$ is non-decreasing in
$\lambda$ if $E[X_2]>E[X_1]$ (see \eqref{eq:Kopt_expo}). We now demonstrate that if we
make the type-$2$ jobs hyper-exponential, this is not necessarily the case
even if $X_2$ stochastically
dominates $X_1$.
Second, for exponential job sizes having $E[X_1] > E[X_2]$ implies that
$K_{opt}=0$, meaning we cannot asymptotically improve upon FCFS. 
We illustrate that when type-$2$ jobs are no longer exponential and 
$E[X_1] > E[X_2]$, we can still achieve
an asymptotic tail improvement in some cases. 

The scenario considered is similar to Figure \ref{fig:TIR3}, that is, $p=0.7$,
type-$1$ jobs are exponential and type-$2$ jobs are hyper-exponential with
$SCV=2$ and shape parameter $f=0.9$. We vary $\lambda$ from $0.01$ to $0.99$
and $E[X_2]/E[X_1]$ from $0.4$ to $2$ (in Figure \ref{fig:TIR3} these were
set at $0.7$ and $1.2$, respectively). Figure \ref{fig:ATIR} presents
two contour plots: one for the ATIR($K_{opt})$ with contour lines from $0.03$ to $0.15$
in steps of $0.03$ and one for $K_{opt}$ with contour lines in $1,2,\ldots,20$.

\begin{figure*}[t!]

\begin{subfigure}{.46\textwidth}
  \centering
  \includegraphics[width=1\linewidth]{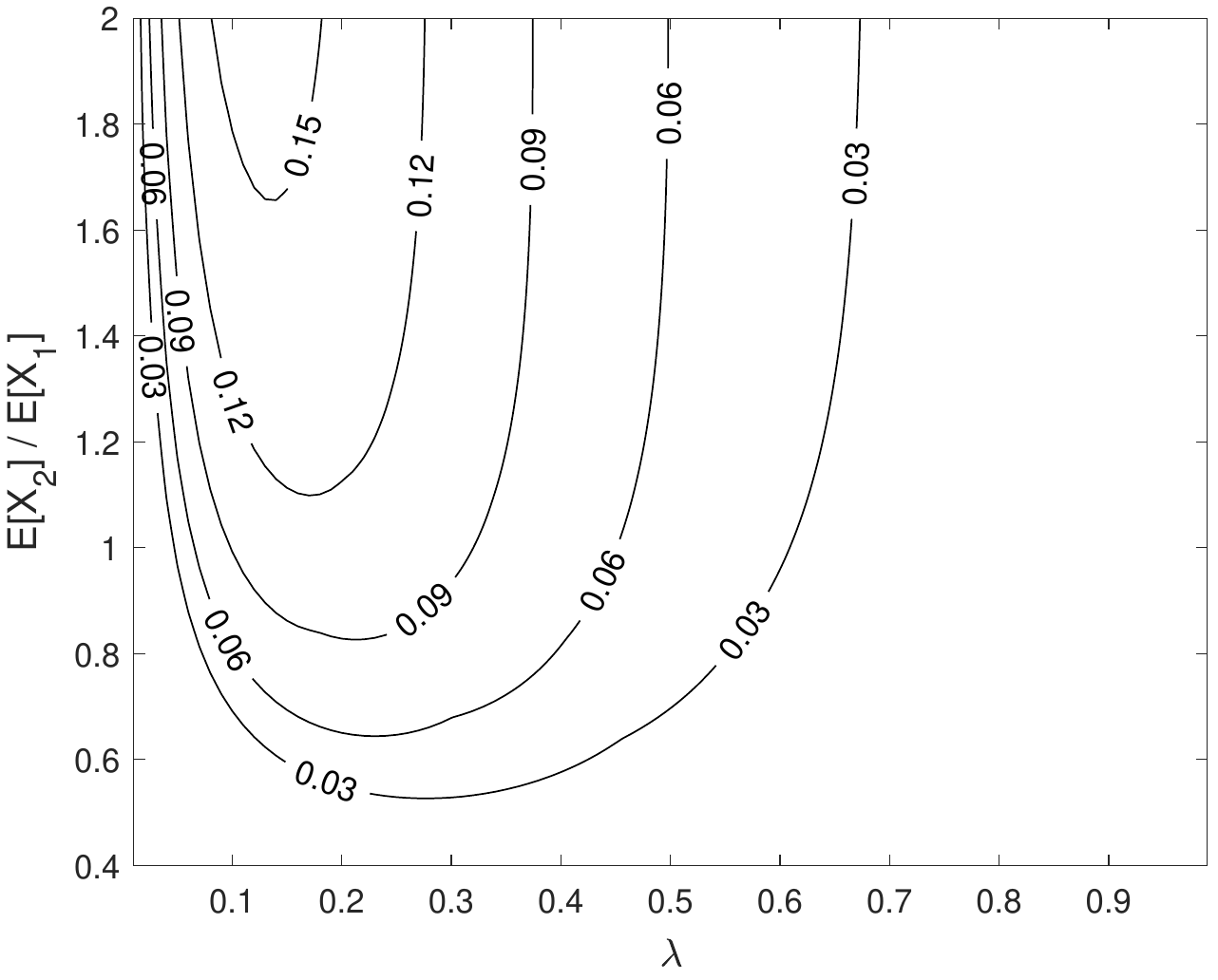}
	\caption{Contour plot of ATIR($K_{opt}$)}
	\label{fig:ExpH2Kopt_p07}
\end{subfigure}
\begin{subfigure}{.46\textwidth}
  \centering
  \includegraphics[width=1\linewidth]{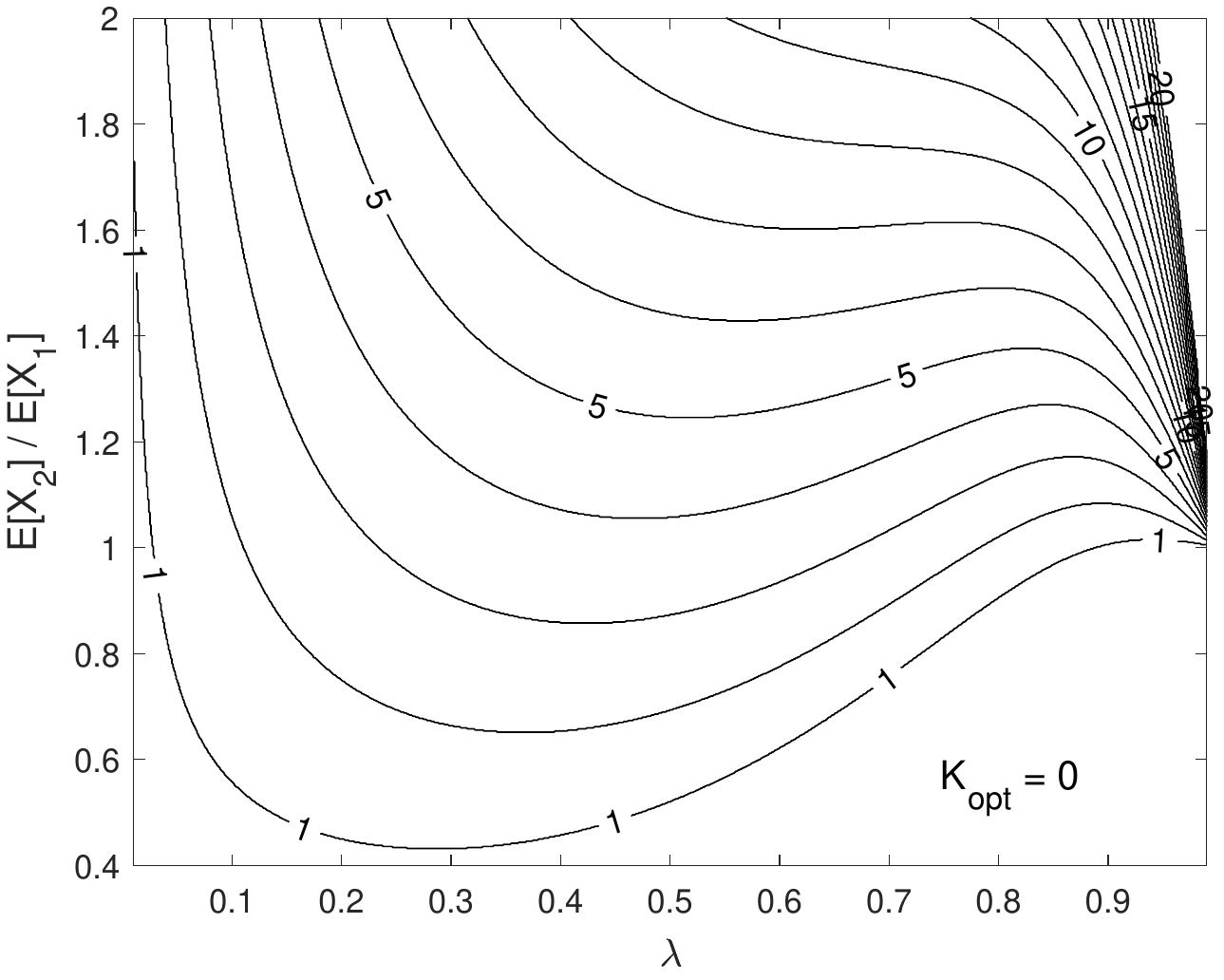}
	\caption{Contour plot of $K_{opt}$ (bounded by $20$)}
	\label{fig:ExpH2best_p07}
\end{subfigure}
\caption{The asymptotic tail improvement ratio of Nudge-K over FCFS (a)
and optimal $K$ (b) for $p=0.7$ with
exponential type-1 and hyper-exponential type-2 jobs ($SCV = 2$ and $f=9/10$).}
\label{fig:ATIR}
\end{figure*}

Looking at the region where $E[X_2]/E[X_1]<1$ clearly shows that we can have an
asymptotic tail improvement even when $E[X_1]>E[X_2]$. If we focus on the
line with $E[X_2]/E[X_1]=1.2$ in the contour plot of $K_{opt}$, we note that
$K_{opt}$ first increases to $4$, then drops to $3$ and finally starts to increase
(without bound due to Theorem \ref{th:KoptHT}) as $\lambda$ tends to one.
This shows that $K_{opt}$ can decrease as a function of $\lambda$. Further
note that when $E[X_1]/E[X_2]=1.2$, then $X_2$ stochastically dominates
$X_1$ (as explained when discussing Figure \ref{fig:TIR3}).

\section{Conclusions}\label{sec:concl}
In this paper we demonstrated that the First-Come-First-Served scheduling algorithm can
be stochastically improved upon by the Nudge-$K$ algorithm under far weaker conditions
that the ones considered in \cite{nudge}. This is practically relevant as it indicates that
it may suffice to identify certain job types, where jobs belonging to one type are typically larger than jobs belonging to another, in order to improve all of the response time percentiles of First-Come-First-Served scheduling. We did this by deriving explicit expressions for the response time
of Nudge-$K$ for the system defined in Section \ref{sec:system}. In addition we presented
a number of elegant results on the asymptotic tail improvement ratio, such as the
expression for the optimal $K$. We also presented various new insights on stochastic
and asymptotic improvements upon First-Come-First-Served scheduling.

The current results can be extended in a number of ways. It is not too difficult to
include a third job type that cannot be swapped with either type-$1$ or type-$2$ jobs,
though we expect smaller gains in such a scenario and the notations become somewhat heavier. The results on the ATIR($K$)
presented in Section \ref{sec:ATIR} and their proofs remain mostly valid if we relax the assumptions that type-$1$ and type-$2$ job sizes are phase-type and simply
demand that $X=pX_1+(1-p)X_2$ is phase-type. In such case we could also define the
type-$1$ and type-$2$ jobs using the job size (as in \cite{nudge}). 

It is also worthwhile seeing what happens if we consider a larger class of Nudge-like
policies, for instance, suppose that we allow that type-$2$ jobs can be involved
in at most $L$ swaps instead of just one. When $L > 1$, the analysis performed for
the type-$1$ jobs in Section \ref{sec:t1} fails as we can no longer make a similar connection
with the FCFS queue. We believe that in such case it is possible to rely on the
framework of Markov modulated fluid queues to study the response time of a type-$1$ job.
In fact initially we used this approach for the analysis of the response
time of a type-$1$ job of Nudge-$K$ before coming up with the more elegant approach presented in Section
\ref{sec:t1}. We note that both these approaches yielded the same numerical results.

\section*{Acknowledgement}
The author likes to thank Isaac Grosof for some useful discussions and suggestions.

\bibliographystyle{plain}
\bibliography{../../PhD/thesis}

\newpage

\appendix

\section{Sensitivity analysis of the ATIR($K$) and $K_{opt}$}

In this appendix we look at the impact of the arrival rate $\lambda$, the
fraction of type-$1$ jobs $p$, the ratio of the mean job sizes $E[X_2]/E[X_1]$
as well as the job size distribution on the ATIR($K$) and the optimal $K_{opt}$.
We first consider the case where both the type-$1$ and type-$2$ job size 
distributions are exponential
(see Figure \ref{fig:ATIR_expo}), 
then we assume both are hyperexponential with an SCV equal to $5$ and balanced means
(see Figure \ref{fig:ATIR_h2}). Finally, we look at the case where both distributions
are Erlang-$5$ distributions (see Figure \ref{fig:ATIR_erl}), 
which means the SCV equals $1/5$.

The main observations regarding these plots are as follows:
\begin{enumerate}
\item The parameter ranges for which the ATIR$(1) > 0$ tend to increase as the
job size variability increases (compare subfigures (a) and (b)
in Figures \ref{fig:ATIR_expo} to \ref{fig:ATIR_erl}). Note that in these
experiments also the variability of the type-$1$ jobs increases. 
Further remark that Theorem \ref{th:ATIRcond} implies that if ATIR($K) > 0$ for
some $K$, then ATIR($1) > 0$.
\item By comparing subfigures (a) and (b) with (c) and (d) in Figures \ref{fig:ATIR_expo} to \ref{fig:ATIR_erl}, we note that setting $K$
equal to the optimal $K$ only gives a limited gain over setting $K=1$. This is in agreement
with our earlier observation (see the remark after Theorem \ref{th:ATIR}) that each time that we increase $K$ by one (until $K=K_{opt}$), 
we obtain a smaller gain in the ATIR($K$).
\item When the job size variability increases we observe larger gains (exceeding $0.2$ for 
the hyperexpontial case, $0.12$ for exponential jobs and $0.08$ for Erlang jobs), but the
gains tend to
occur at lower system loads $\lambda$ when the variability increases (see subfigures (a) to (d)). 
\item The optimal $K$ tends to grow with $\lambda$ and $p$ (though this is not always the
case as seen in Figure \ref{fig:ATIR}(b)) and increases significantly as the variability in the job size
increases (compare subfigures (e) and (f) in Figures \ref{fig:ATIR_expo} to \ref{fig:ATIR_erl}). 
\item While the ATIR($K$) always becomes positive under heavy traffic and $K_{opt}$ tends
to infinity, the value of the ATIR($K$) tends to zero as $\lambda$ tends to one. This can be understood by noting that the relative gain of limited swapping between type-$1$ and 
type-$2$ jobs vanishes as the mean workload tends to infinity. 
\end{enumerate}

\begin{figure*}[h!]
\begin{subfigure}{.46\textwidth}
  \centering
  \includegraphics[width=1\linewidth]{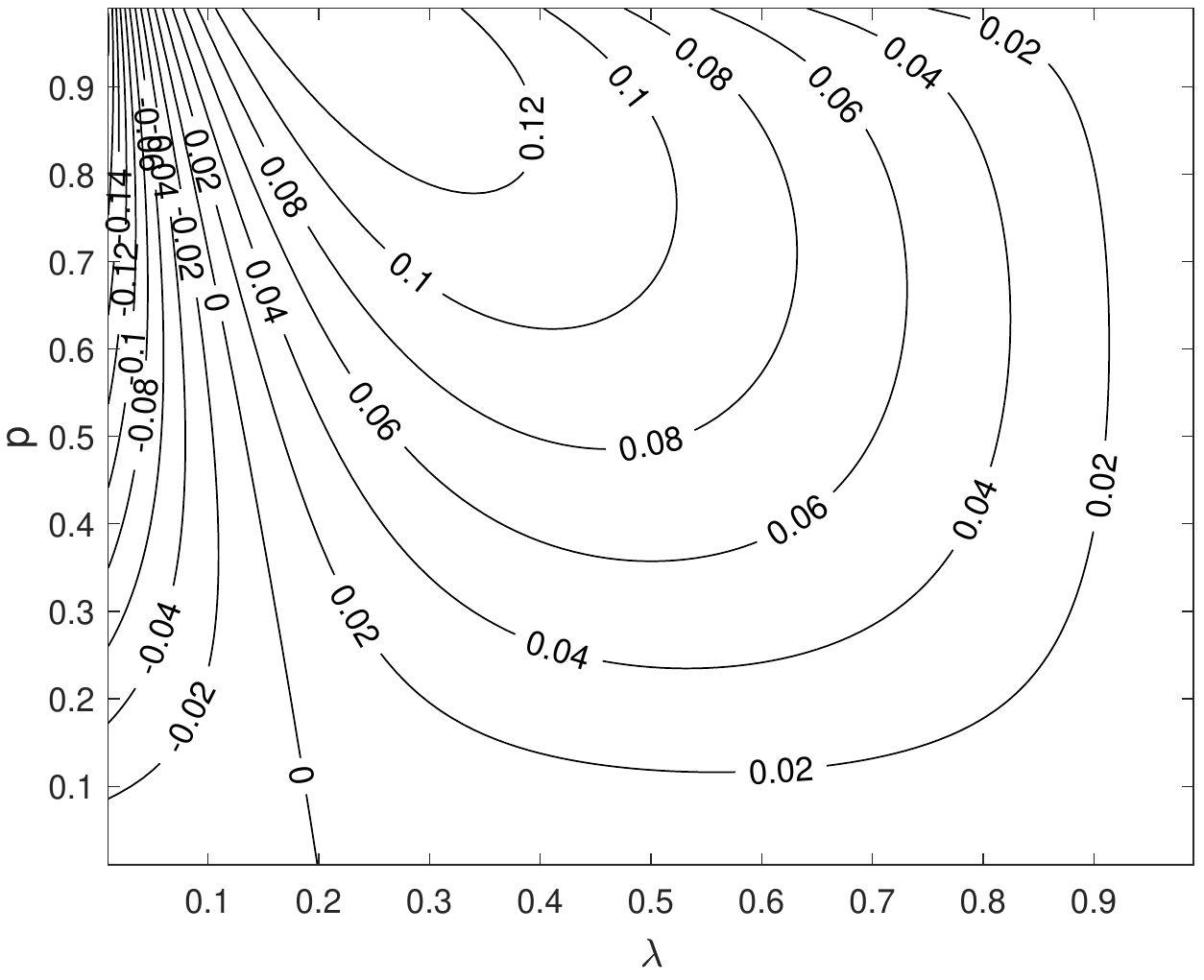}
	\caption{$K = 1$, $E[X_2]/E[X_1] = 5$}
	\label{fig:K1r5}
\end{subfigure}
\begin{subfigure}{.46\textwidth}
  \centering
  \includegraphics[width=1\linewidth]{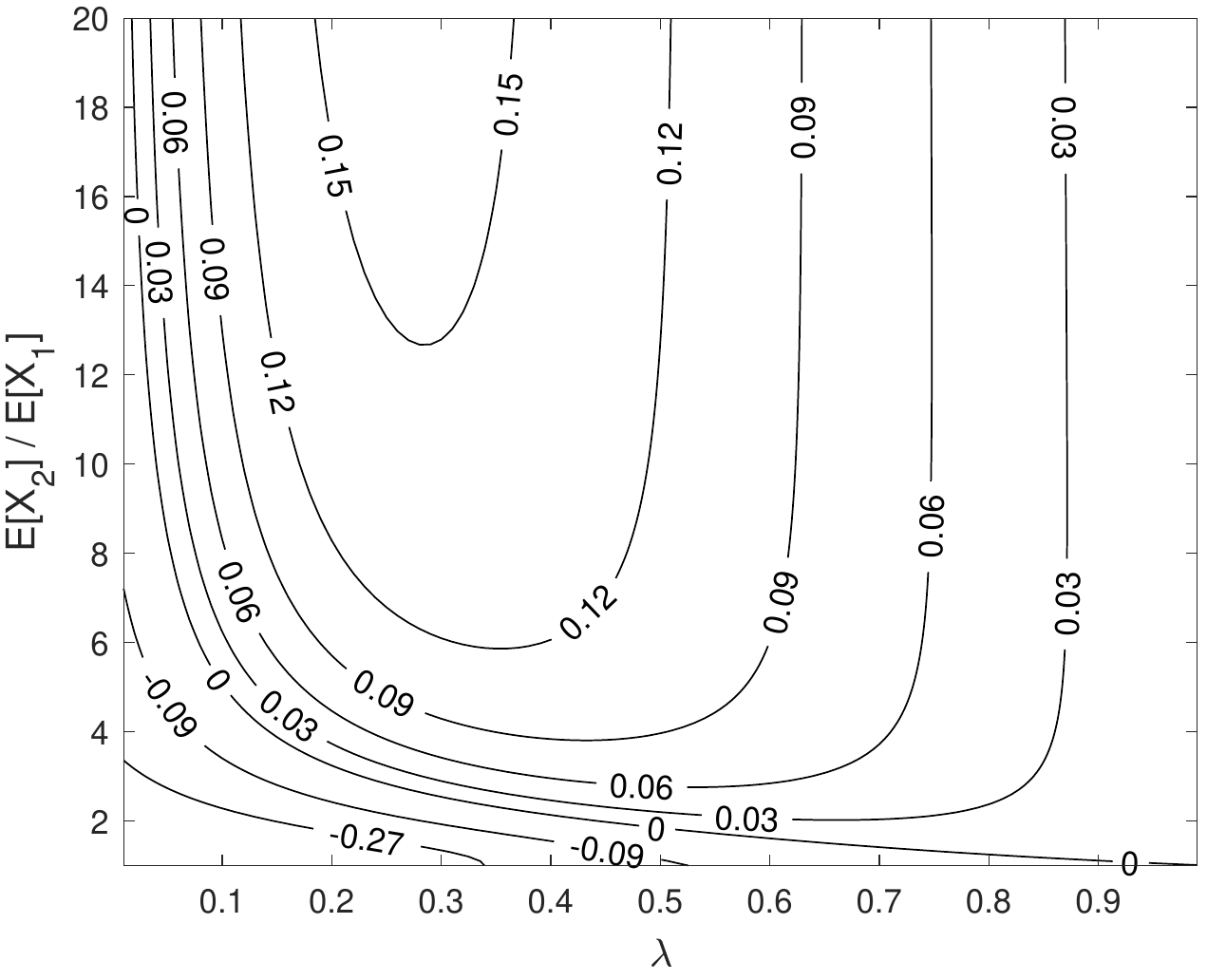}
	\caption{$K = 1$, $p = 0.7$}
	\label{fig:K1p07}
\end{subfigure}
\begin{subfigure}{.46\textwidth}
  \centering
  \includegraphics[width=1\linewidth]{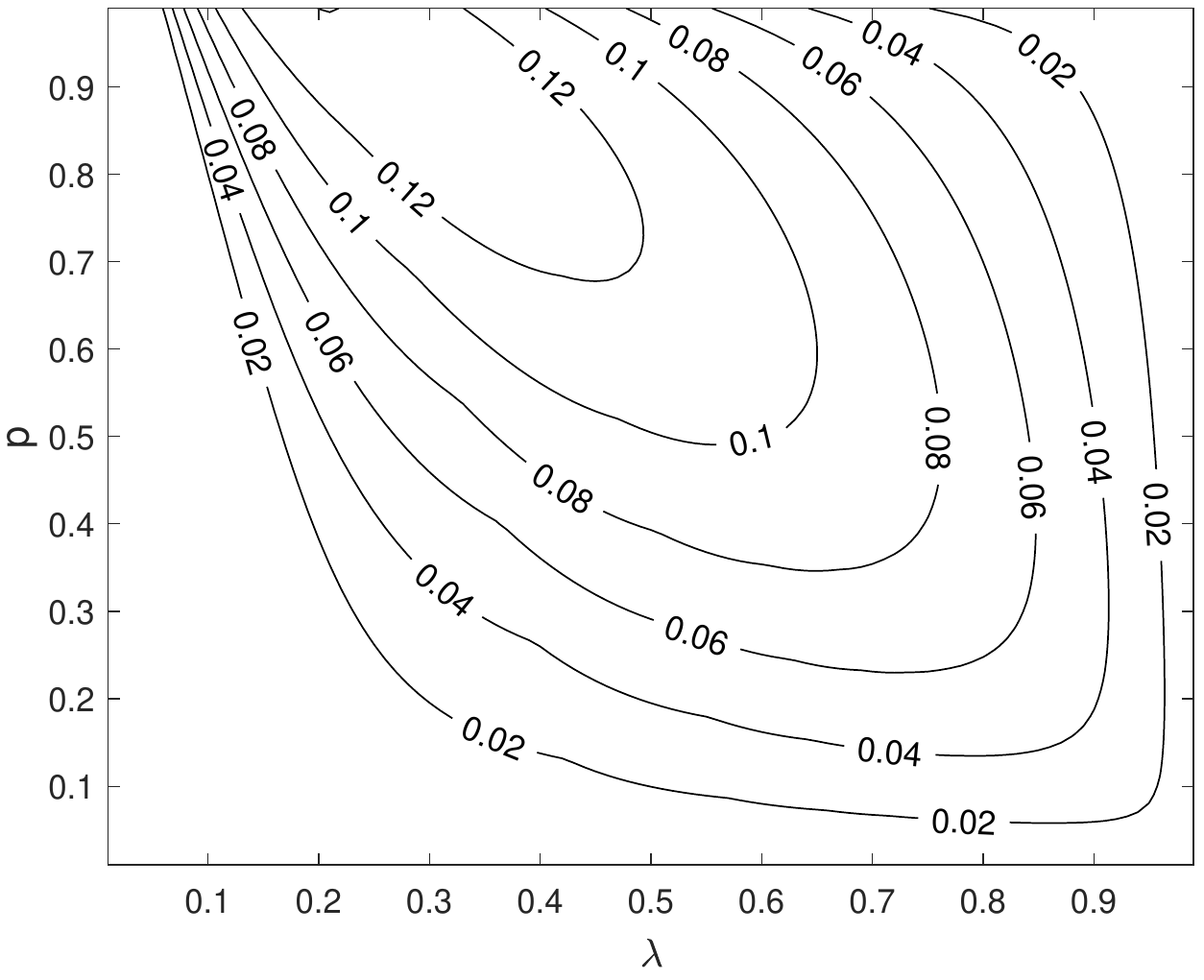}
	\caption{$K = K_{opt}$, $E[X_2]/E[X_1] =5$}
	\label{fig:Kopt_r5}
\end{subfigure}
\begin{subfigure}{.46\textwidth}
  \centering
  \includegraphics[width=1\linewidth]{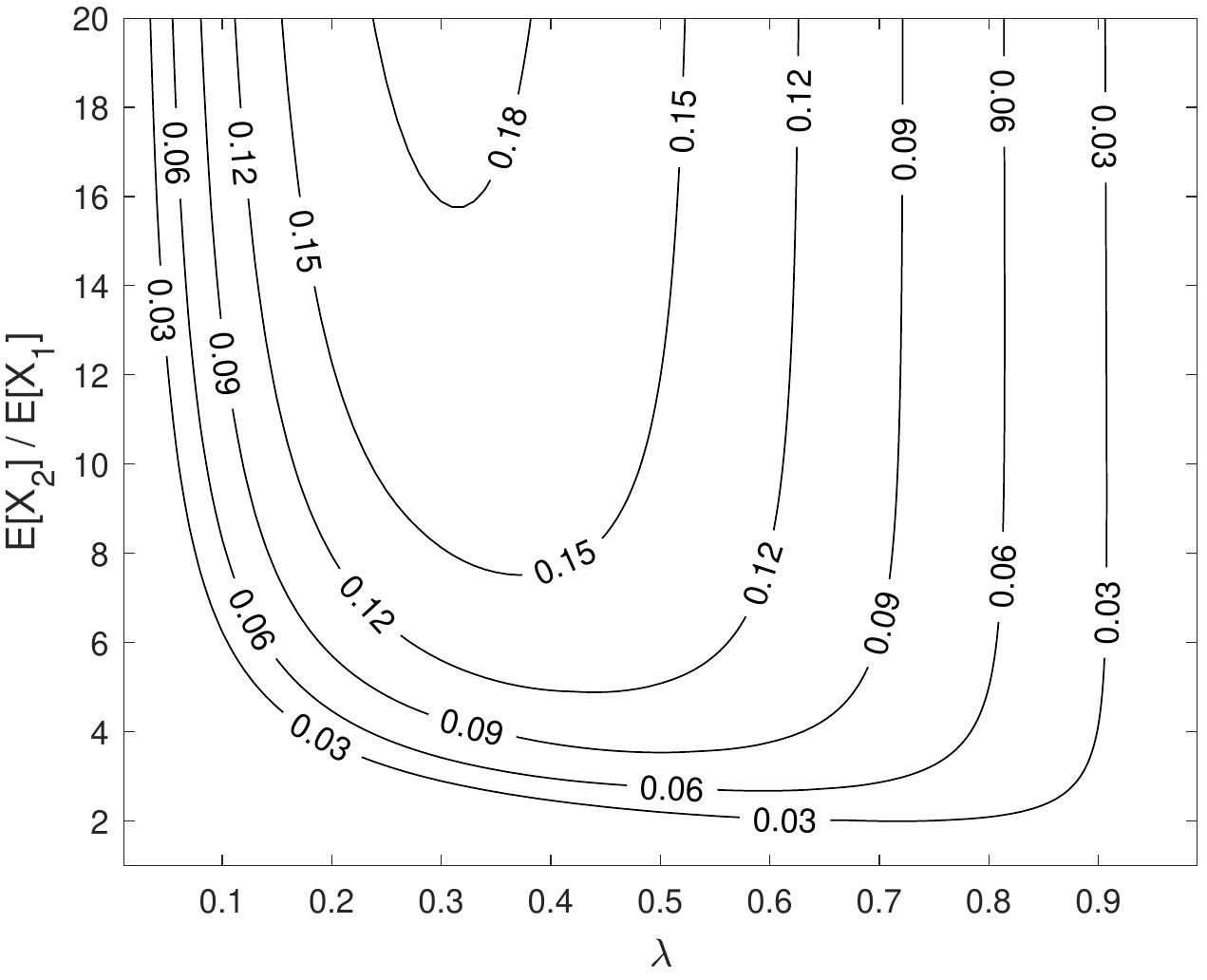}
	\caption{$K = K_{opt}$, $p = 0.7$}
	\label{fig:Kopt_p07}
\end{subfigure}
\begin{subfigure}{.46\textwidth}
  \centering
  \includegraphics[width=1\linewidth]{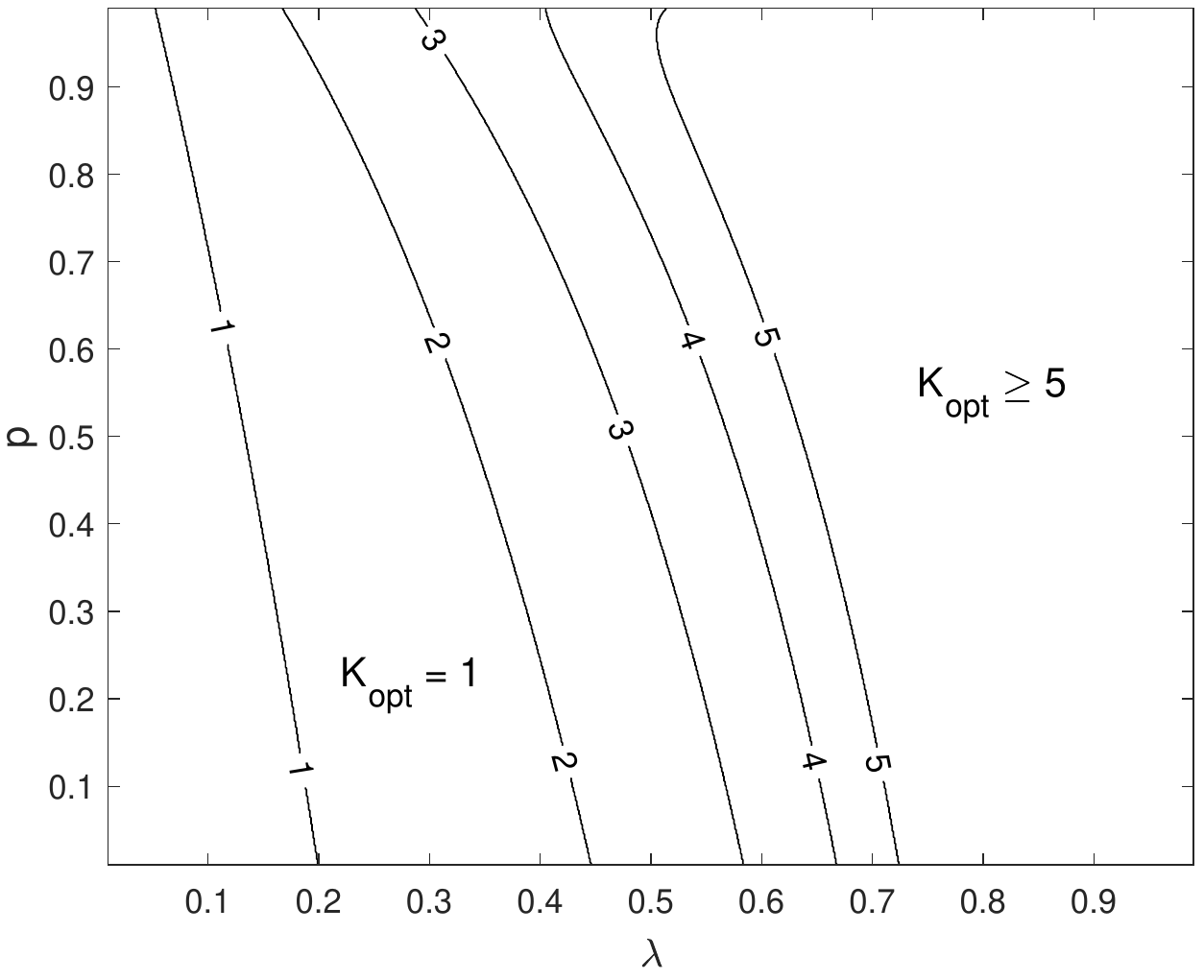}
	\caption{$E[X_2]/E[X_1] = 5$}
	\label{fig:best_r5}
\end{subfigure}
\hspace*{0.35cm}
\begin{subfigure}{.46\textwidth}
  \centering
  \includegraphics[width=1\linewidth]{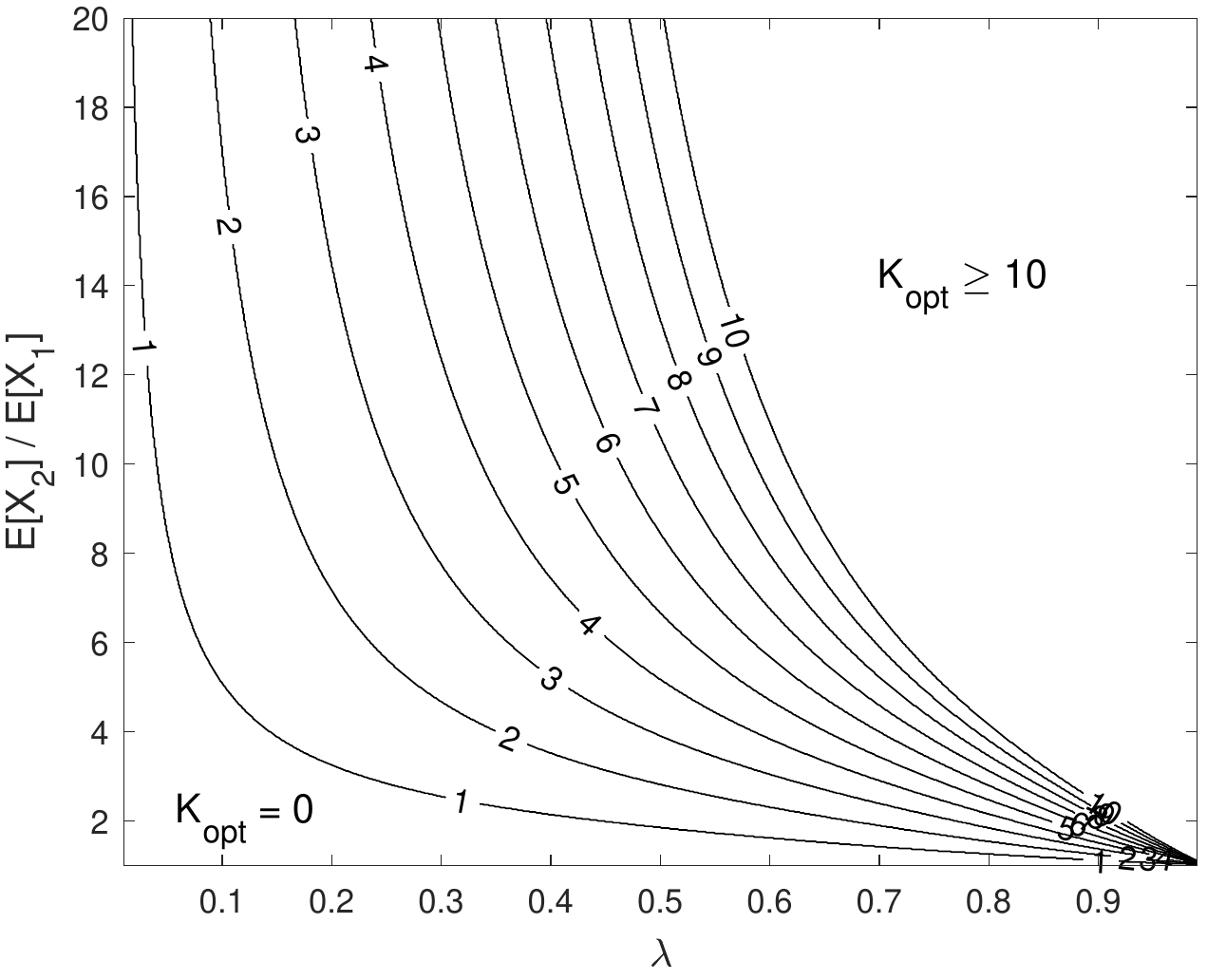}
	\caption{$p=0.7$}
	\label{fig:best_p07}
\end{subfigure}
\caption{The asymptotic tail improvement ratio of Nudge-K over FCFS (a)-(d) 
and optimal $K$ (e)-(f) with
exponential type-1 and type-2 jobs.}
\label{fig:ATIR_expo}
\end{figure*}

\begin{figure*}[t!]
\begin{subfigure}{.46\textwidth}
  \centering
  \includegraphics[width=1\linewidth]{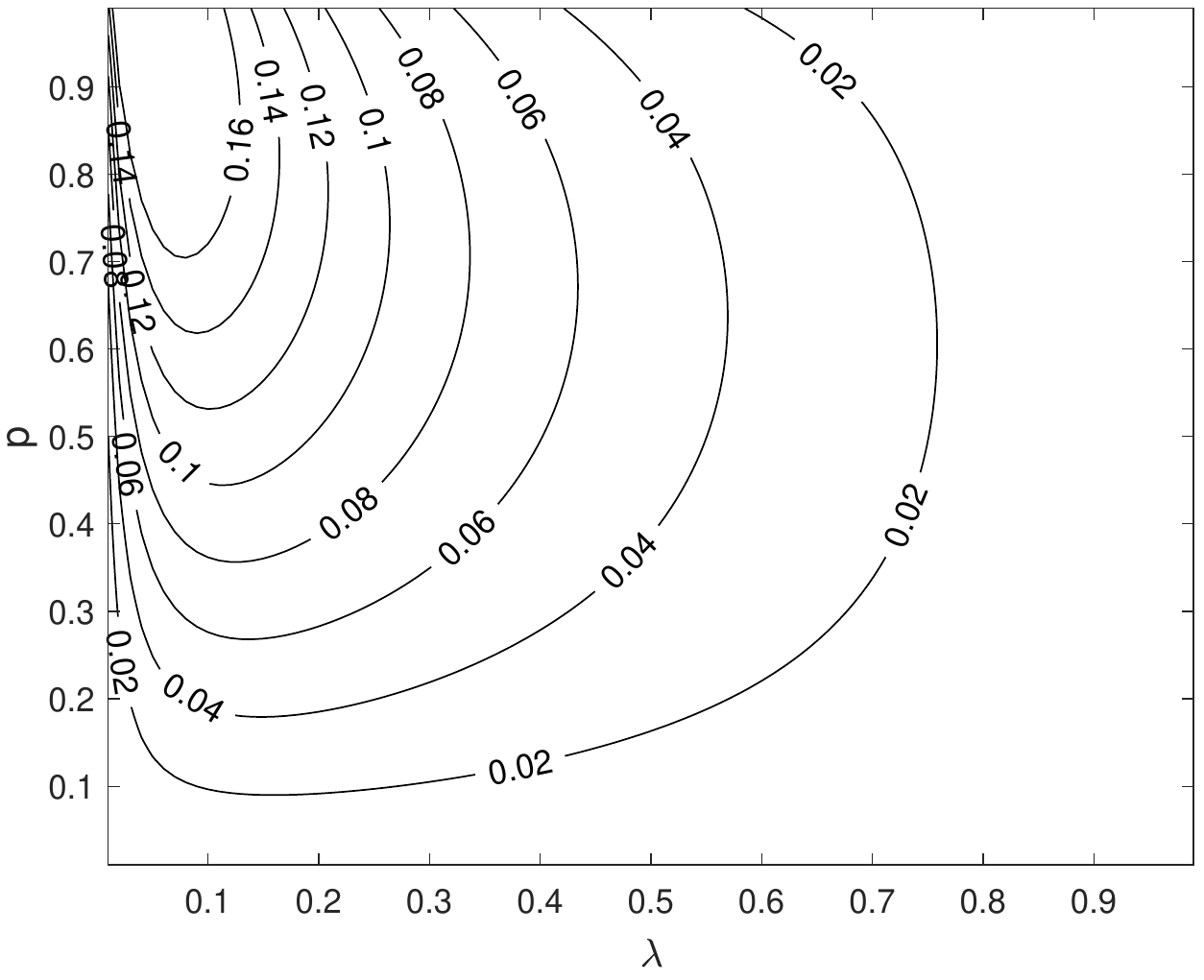}
	\caption{$K = 1$, $E[X_2]/E[X_1] = 5$}
	\label{fig:K1r5h}
\end{subfigure}
\begin{subfigure}{.46\textwidth}
  \centering
  \includegraphics[width=1\linewidth]{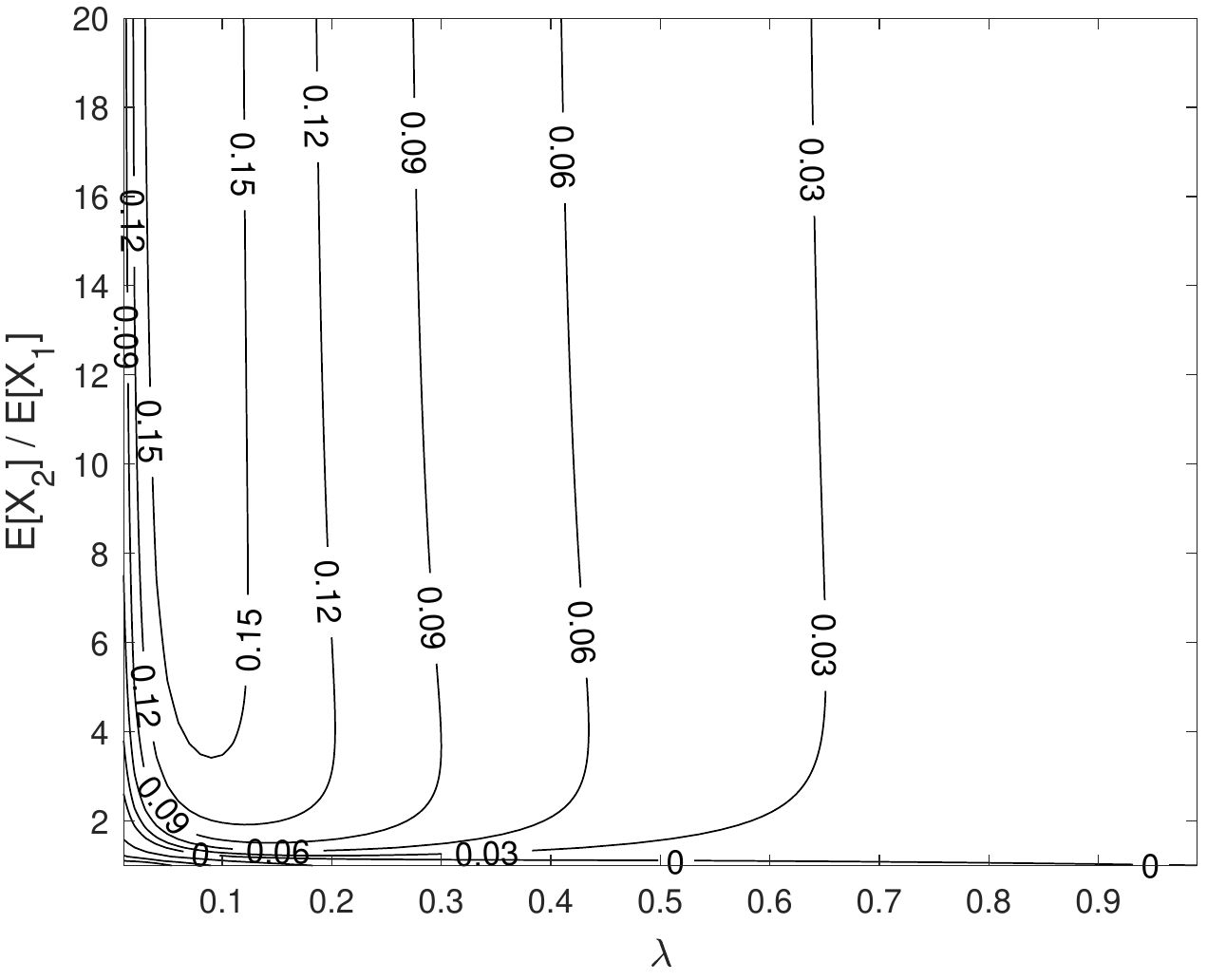}
	\caption{$K = 1$, $p = 0.7$}
	\label{fig:K1p07h}
\end{subfigure}
\begin{subfigure}{.46\textwidth}
  \centering
  \includegraphics[width=1\linewidth]{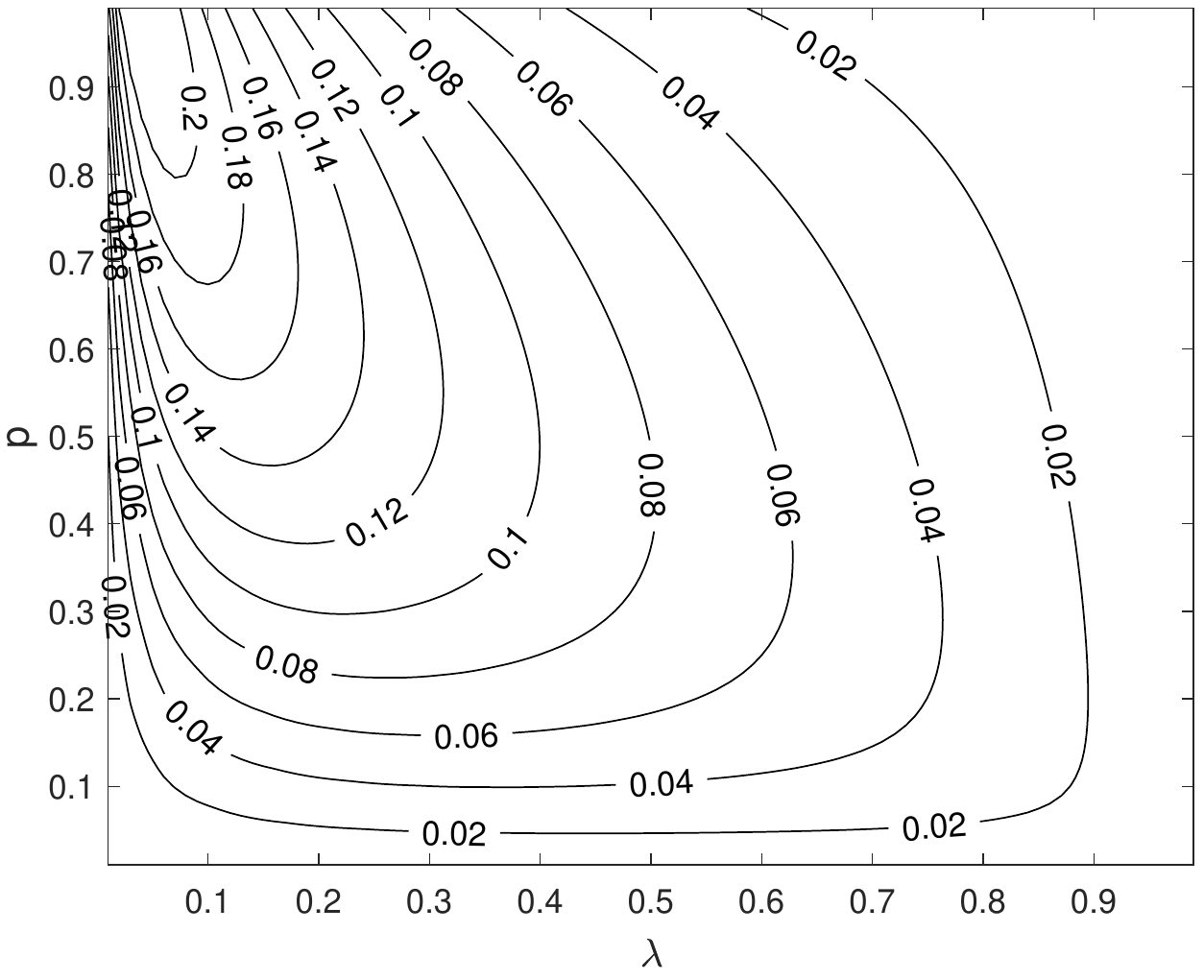}
	\caption{$K = K_{opt}$, $E[X_2]/E[X_1] = 5$}
	\label{fig:Kopt_r5h}
\end{subfigure}
\begin{subfigure}{.46\textwidth}
  \centering
  \includegraphics[width=1\linewidth]{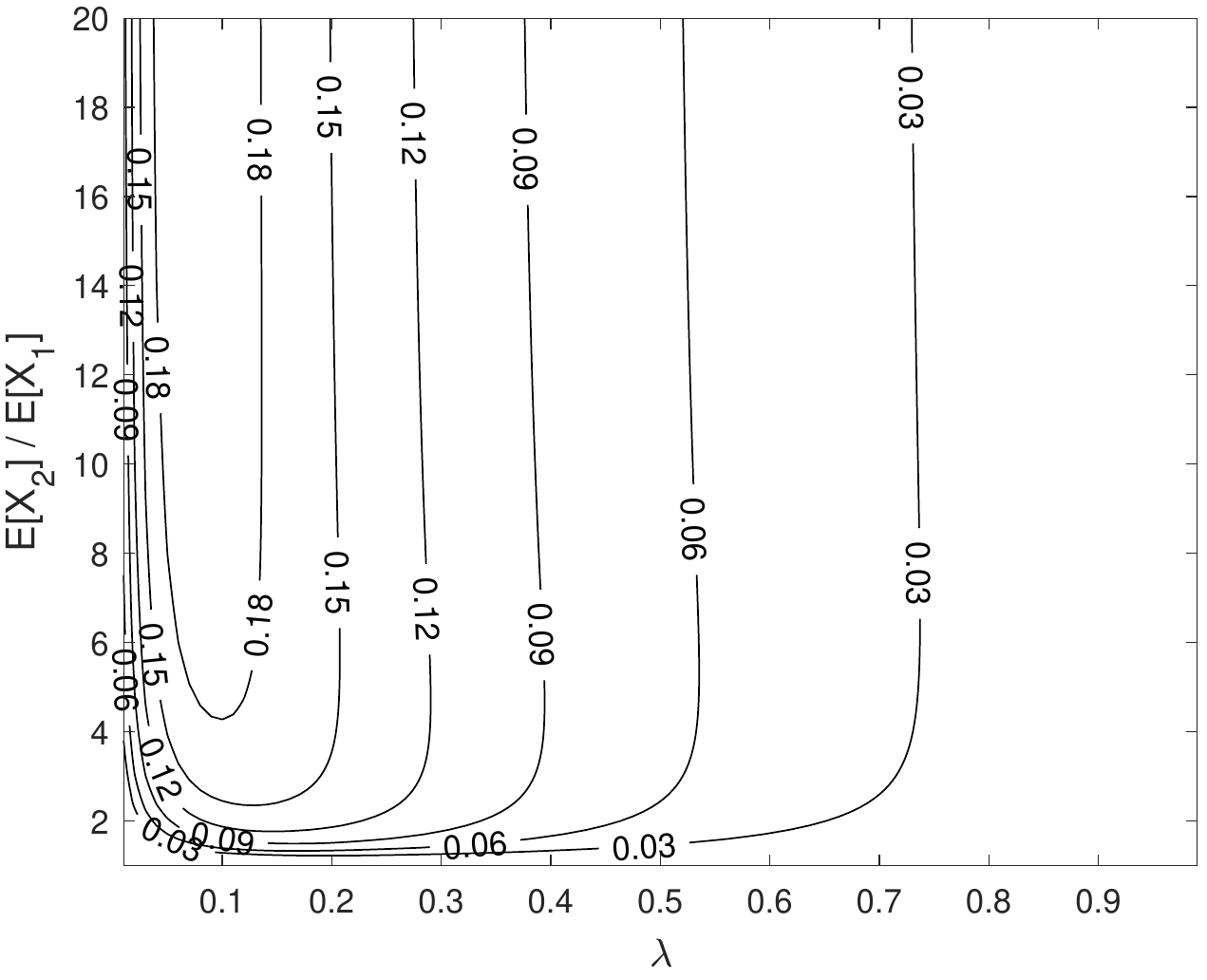}
	\caption{$K = K_{opt}$, $p = 0.7$}
	\label{fig:Kopt_p07h}
\end{subfigure}
\begin{subfigure}{.46\textwidth}
  \centering
  \includegraphics[width=1\linewidth]{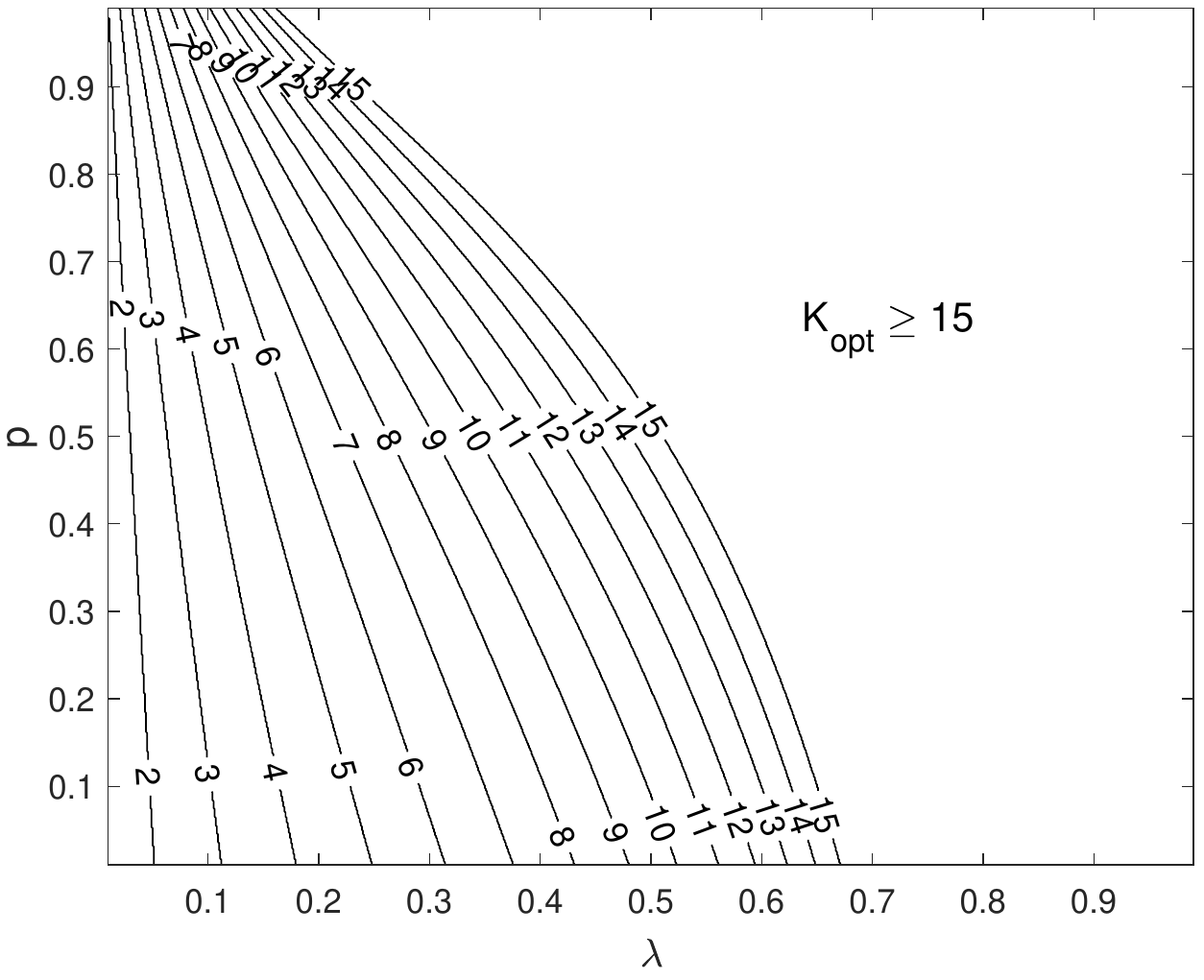}
	\caption{$E[X_2]/E[X_1] = 5$}
	\label{fig:H2best_r2}
\end{subfigure}
\begin{subfigure}{.46\textwidth}
  \centering
  \includegraphics[width=1\linewidth]{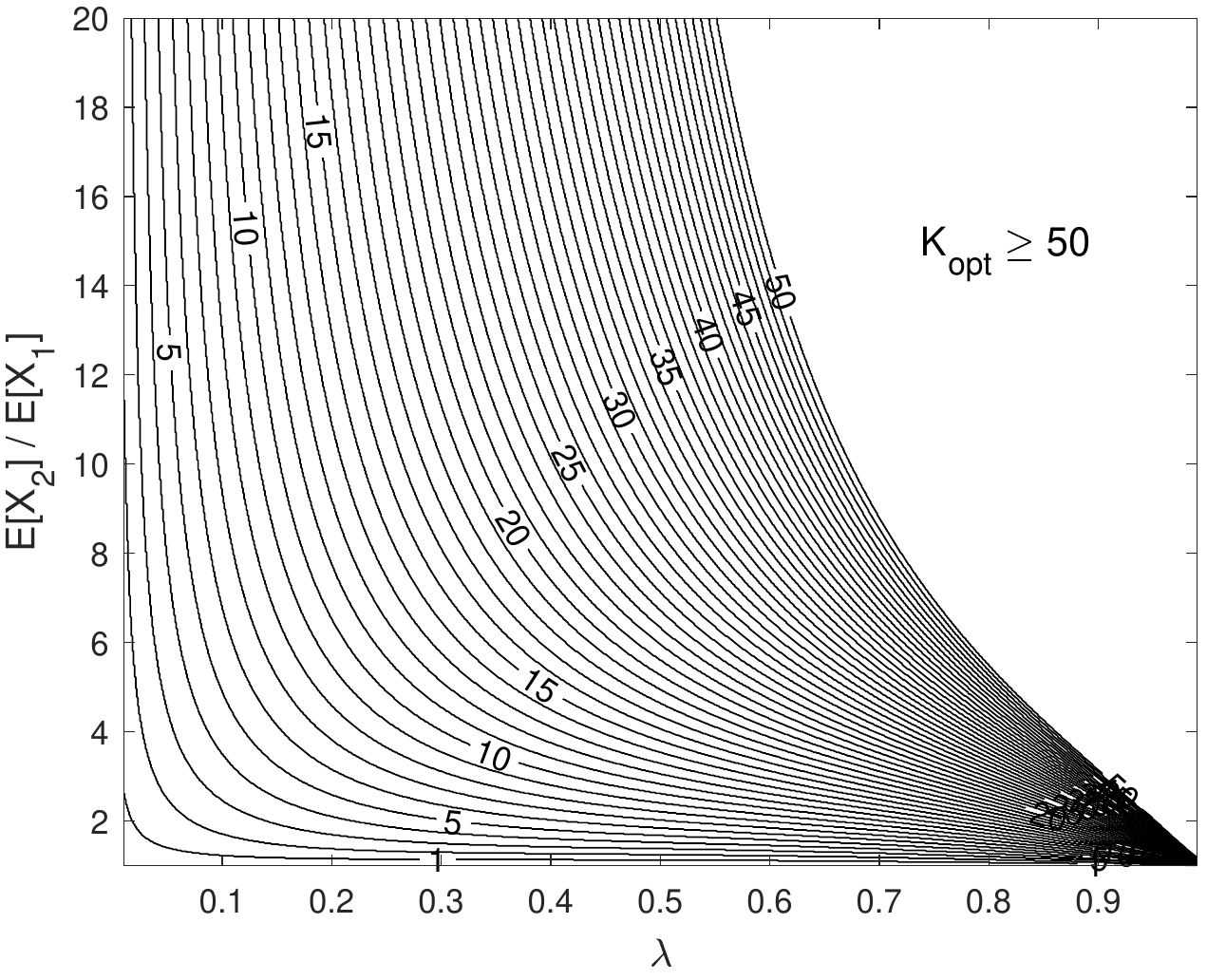}
	\caption{$p = 0.7$}
	\label{fig:H2best_p07}
\end{subfigure}

\caption{The asymptotic tail improvement ratio of Nudge-K over FCFS (a)-(d)
and optimal $K$ (e)-(f) with
hyper-exponential type-1 and type-2 jobs ($SCV = 5$ and balanced means).}
\label{fig:ATIR_h2}
\end{figure*}

\begin{figure*}[t!]
\begin{subfigure}{.46\textwidth}
  \centering
  \includegraphics[width=1\linewidth]{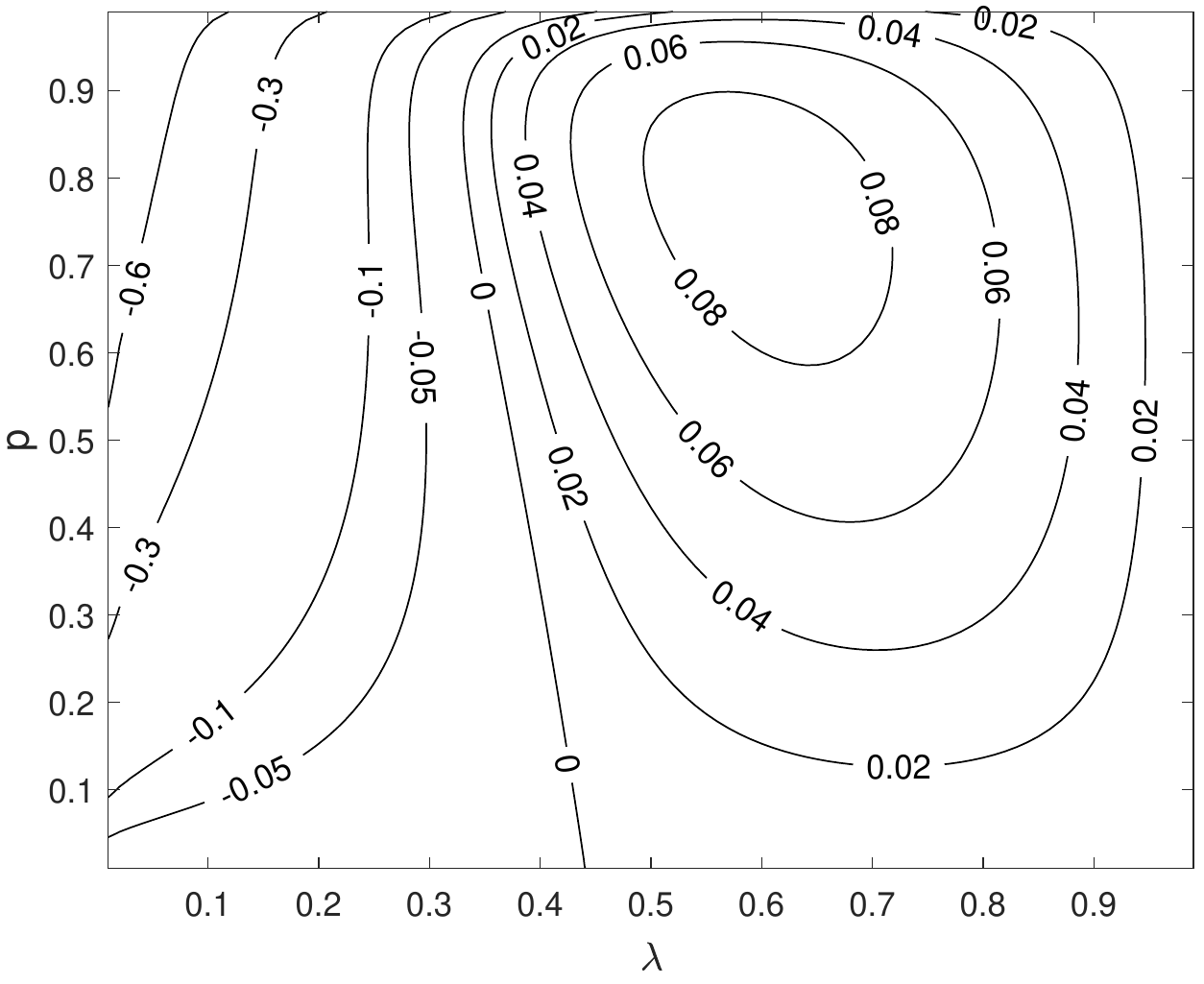}
	\caption{$K = 1$, $E[X_2]/E[X_1] = 5$}
	\label{fig:K1r5e}
\end{subfigure}
\begin{subfigure}{.46\textwidth}
  \centering
  \includegraphics[width=1\linewidth]{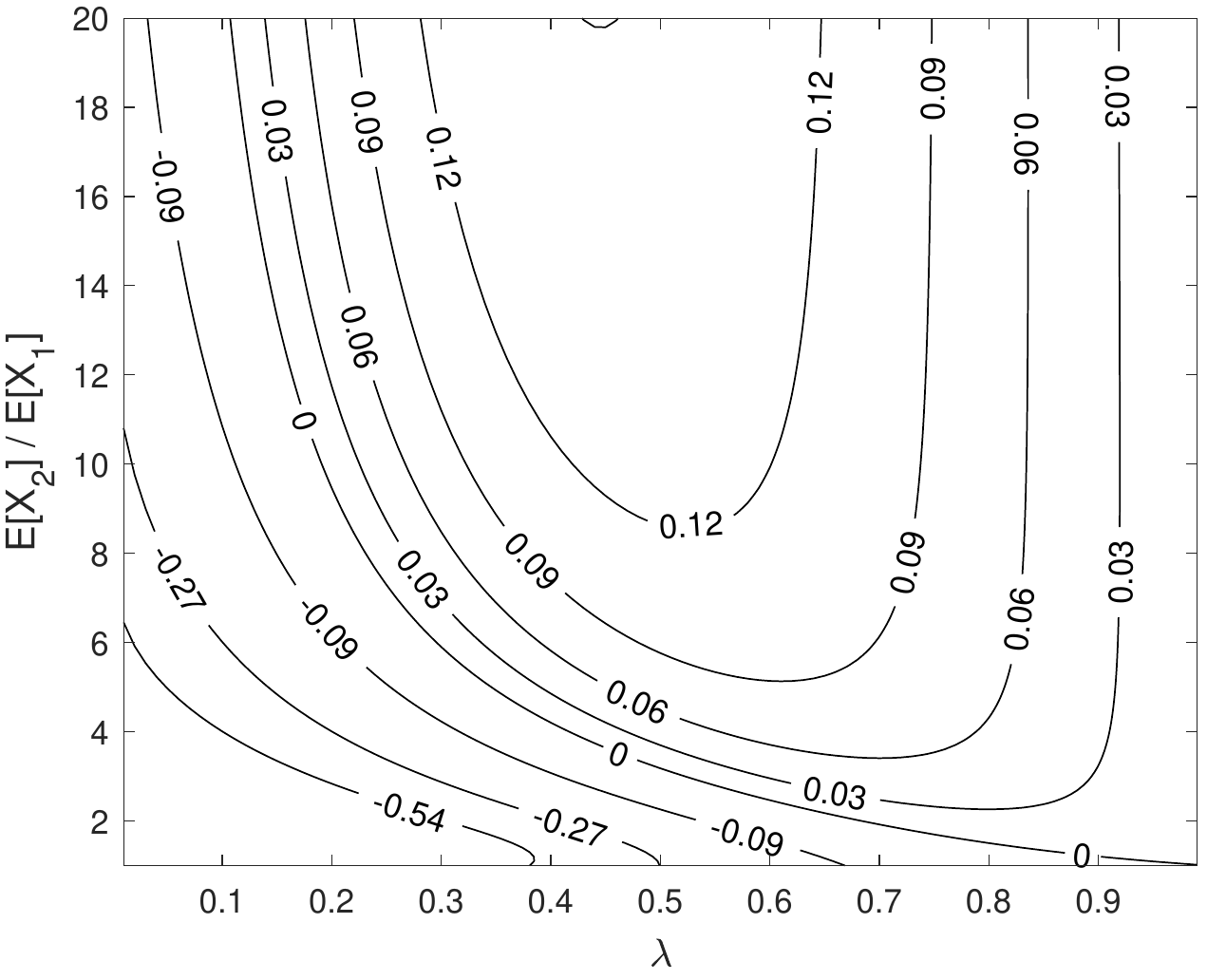}
	\caption{$K = 1$, $p = 0.7$}
	\label{fig:K1p07e}
\end{subfigure}
\begin{subfigure}{.46\textwidth}
  \centering
  \includegraphics[width=1\linewidth]{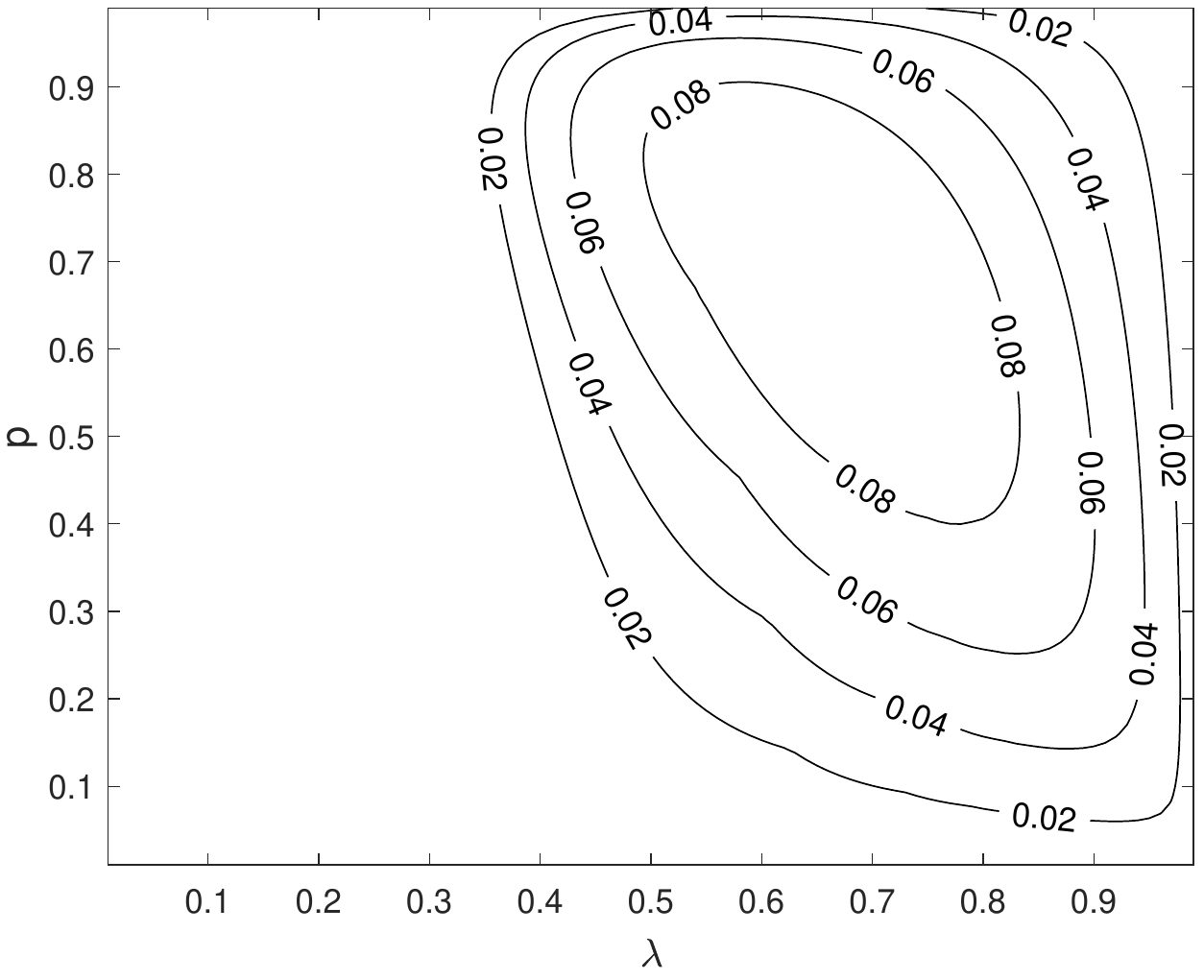}
	\caption{$K = K_{opt}$, $E[X_2]/E[X_1] = 5$}
	\label{fig:Koptr5e}
\end{subfigure}
\begin{subfigure}{.46\textwidth}
  \centering
  \includegraphics[width=1\linewidth]{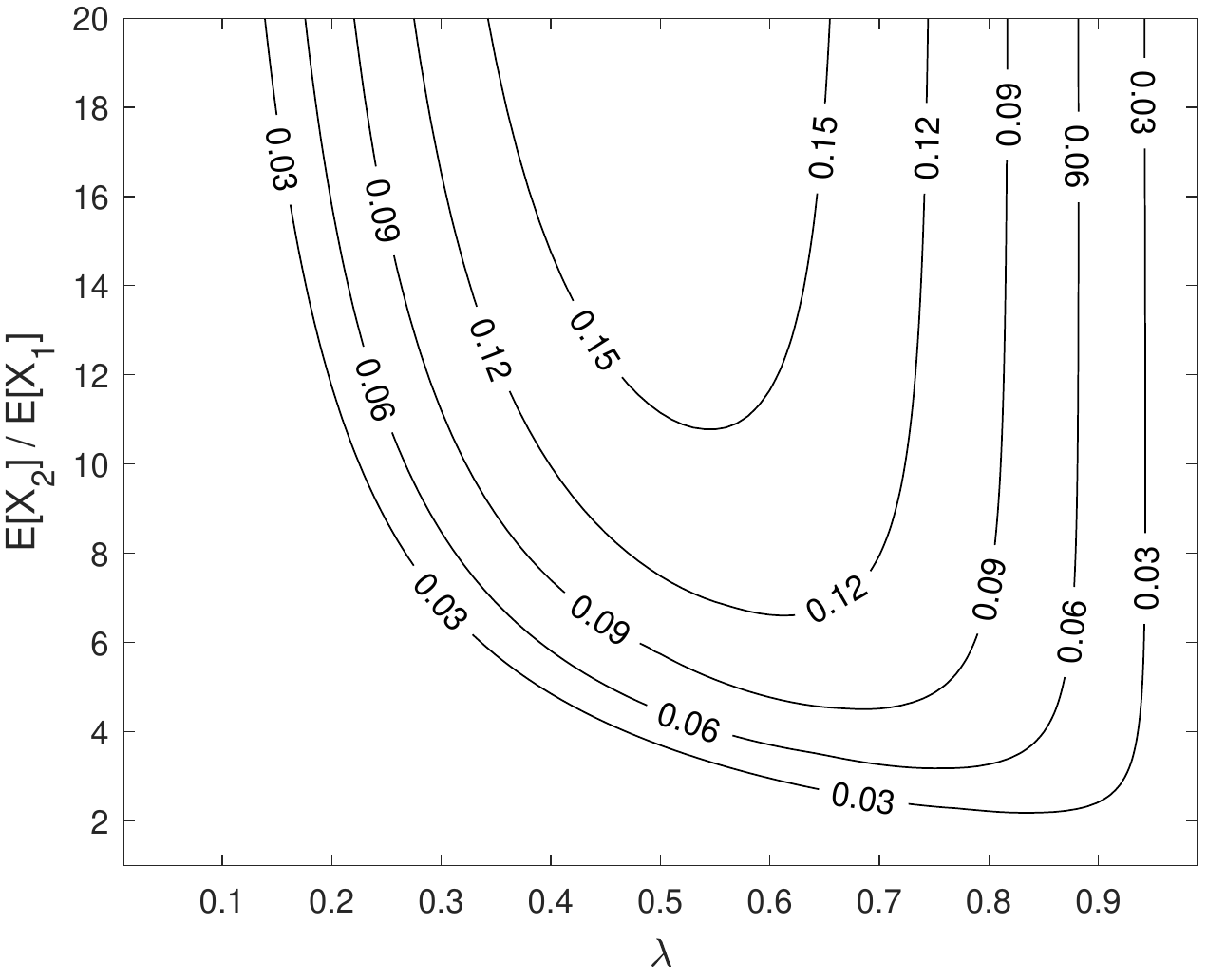}
	\caption{$K = K_{opt}$, $p = 0.7$}
	\label{fig:Koptp07e}
\end{subfigure}
\begin{subfigure}{.46\textwidth}
  \centering
  \includegraphics[width=1\linewidth]{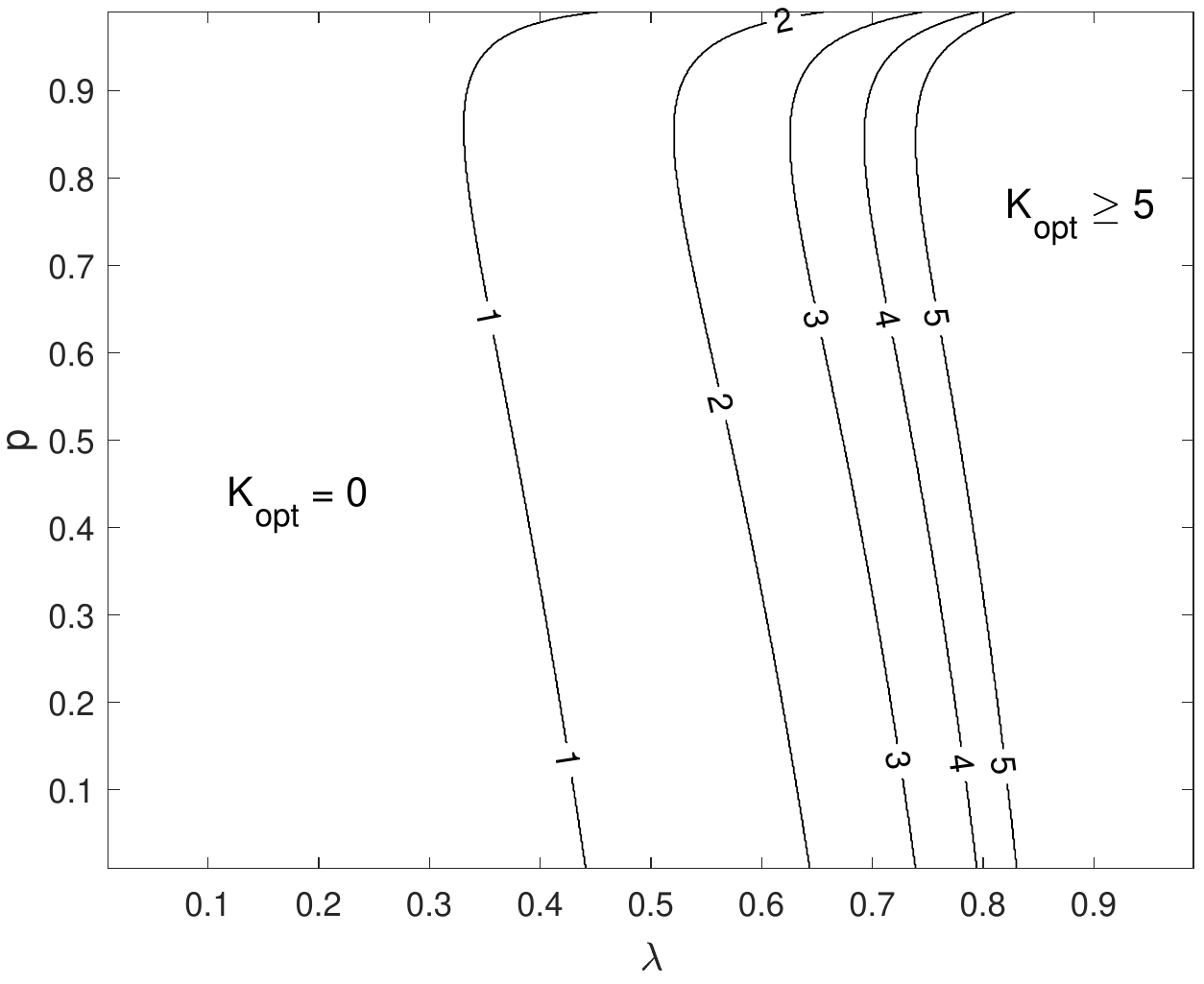}
	\caption{$E[X_2]/E[X_1] = 5$}
	\label{fig:Erlbest_r2}
\end{subfigure}
\hspace*{0.35cm}
\begin{subfigure}{.46\textwidth}
  \centering
  \includegraphics[width=1\linewidth]{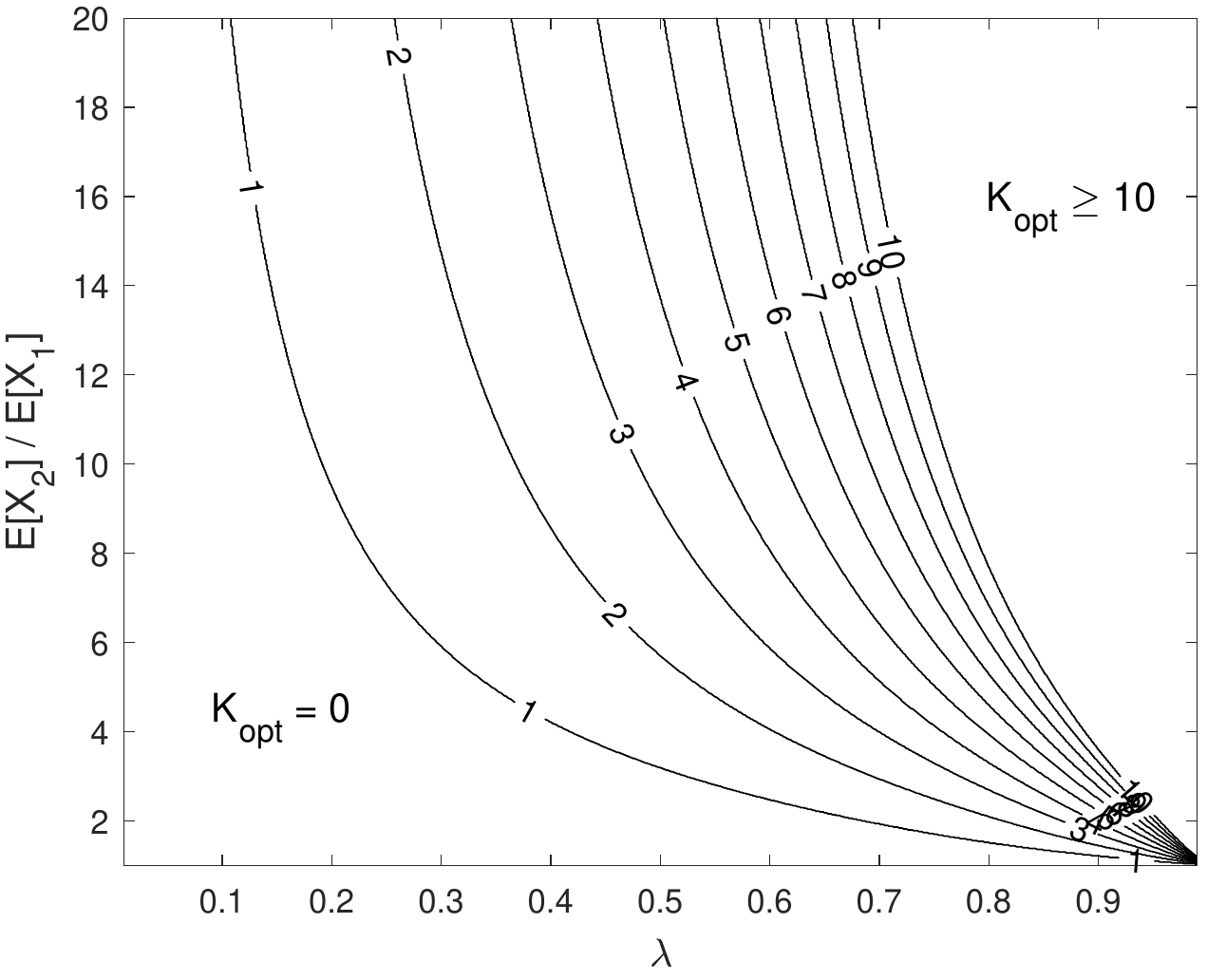}
	\caption{$p = 0.7$}
	\label{fig:Erlbest_p07}
\end{subfigure}
\caption{The asymptotic tail improvement ratio of Nudge-K over FCFS 
(a)-(d) and optimal $K$ (e)-(f) with
Erlang-$5$ type-1 and type-2 jobs.}
\label{fig:ATIR_erl}
\end{figure*}
\end{document}